\newtheorem{theorem}{Theorem}[section]
\newtheorem{remark}{Remark}[section]
\newtheorem{definition}{Definition}[section]
\numberwithin{equation}{section}
\begin{document}

\centerline{\bf Criteria for boundedness of a class of integral operators}
\centerline{\bf   from $L_p$ to $L_q$ for $1<q<p<\infty$}

\vspace{5mm}
\centerline{R. Oinarov$^{a,}$\footnote{Corresponding author: o\_ryskul@mail.ru}, A. Temirkhanova$^{a}$, A. Kalybay$^b$}
\begin{center}
$^a$L.N. Gumilyov Eurasian National University, 2 Satpaev Str.,  Astana 101008, Kazakhstan  \\
$^b$KIMEP University, 4 Abay Ave., Almaty 050010, Kazakhstan
\end{center}

\begin{abstract} In the paper, we consider integral operators with non-negative kernels satisfying conditions, which are less restrictive than conditions studied earlier. We establish criteria for the boundedness of these operators in Lebesgue spaces.
\end{abstract}

\noindent{\it 2000 Mathematics Subject Classification.} 26D10, 26D15, 47B34.
\vspace{2mm}

\noindent {\it Key words and phrases.} Integral operator, boundedness, weighted
inequality, weight function, kernel.

\section{Introduction}

Let  $I=(0, \infty)$, $1<p,q<\infty$ and $p'=\frac{p}{p-1}$. Suppose that $u$ and $v$ are positive functions locally summable on $I$ such that $u\in L_q^{loc} (I)$ and $v\in L_{p'}^{loc} (I)$.

We consider the problem of boundedness from $L_p (I)$ to $L_q (I)$, $1<q<p<\infty$, of the following integral operators
\begin{equation}\label{1.1}
 \mathcal{K}^{+} f(x)= \int\limits_0^x u(x) K(x,s) v(s) f(s) ds, ~x>0,
\end{equation}
\begin{equation}\label{1.2}
 \mathcal{K}^{-} g(s)= \int\limits_s^\infty v(s) K(x,s) u(x) g(x) dx, ~s>0,
\end{equation}
with a kernel $K(x,s)\geq 0$, $x\geq s>0$, i.e., the validity of the inequalities
\begin{equation}\label{1.3+}
\|\mathcal{K}^{+} f\|_q \leq C \|f\|_p, ~ f \in L_p (I),
\end{equation}
\begin{equation}\label{1.3-}
\|\mathcal{K}^{-} f\|_q \leq C \|f\|_p, ~ f \in L_p (I),
\end{equation}
where  $\|\cdot\|_p $ is the standard norm of the space $L_p (I)$.

Taking $K(\cdot,\cdot)\equiv 1$ in \eqref{1.1} and \eqref{1.2}, we respectively get the weighted Hardy operator and the dual weighted Hardy operator, for which inequalities \eqref{1.3+} and \eqref{1.3-} were characterized for all possible parameters $0 <q \leq\infty$ and $1\leq p\leq \infty$ (see, e.g., \cite{AM,B,K,Maz1,Maz2,M,S,SS}).

At the end of the last century, research began on Hardy operators with kernels of forms \eqref{1.1} and \eqref{1.2}.
However, since the problem of characterizing inequalities \eqref{1.3+} and \eqref{1.3-} with an arbitrary measurable kernel $K(\cdot,\cdot)\geq 0$ is open, specific kernels were considered. In papers \cite{S1,S2,S3}, necessary and sufficient conditions for the validity of inequalities \eqref{1.3+} and \eqref{1.3-} were found for the kernel $K(x,s)=(x-s)^\alpha$, $\alpha>0$. These results were successfully used to establish the discreteness of the spectrum of certain higher-order ``polar'' operators (see \cite{S1}).

The problem arose of selecting a class of kernels, which would cover the kernels of known operators.
At the beginning of the 1990s, in the works \cite{BK} and \cite{O1}, a class $\mathcal{O}$ was introduced.  The class $\mathcal{O}$ contains many kernels of different operators and, in particular, kernels of fractional integration operators.
A measurable function $K(x,s)\geq 0$ belongs to the class $\mathcal{O}$ if there exists a number $h\geq 1$ and $h^{-1}\big(K(x,t )+K(t,s)\big)\leq K(x,s)\leq h\big(K(x,t)+K(t,s)\big)$ for $x\geq t\geq s$. Criteria for the boundedness of operators \eqref{1.1} and \eqref{1.2} with a kernel from the class $\mathcal{O}$ have good applications in various problems of analysis, for example, in the oscillatory and spectral theory of higher order differential operators  (see, e.g., \cite{BKO,KOS1,KOS2}). In recent years, operators \eqref{1.1} and \eqref{1.2} with a kernel from the class $\mathcal{O}$  have become the object of many works (see, e.g., \cite{CH,GM,KO,P3,StS1,SYL}). In the last decade, many papers have discussed the weighted estimates of iterated operators, where iterations are taken with respect to different combinations of the Hardy operator, the dual Hardy operator, operators  \eqref{1.1} and \eqref{1.2} with kernels from the class $\mathcal{O}$ (see, e.g., \cite{GS,GMPTU,GM,KO,P3,StS1,StS2}). However, the  kernels of iterations of operators \eqref{1.1} and \eqref{1.2} with a kernel from the class $\mathcal{O}$ do not belong to the class $\mathcal{O}$.

In the paper \cite{O3}, the  classes $\mathcal{O}_n^{+}$ and $\mathcal{O}_n^{-}$, $n\geq 0$, of functions $K(x,s)\geq 0$, $x\geq s>0$, were introduced. These classes are wider than the class $\mathcal{O}$. Necessary and sufficient conditions of boundedness of operators  \eqref{1.1} and \eqref{1.2} from $L_p (I)$ to $L_q (I)$ were established for $1<p\leq q<\infty$ when their kernels $K(\cdot, \cdot)$ belong to the class $\mathcal{O}_n^{+}\bigcup\mathcal{O}_n^{-}$, $n\geq 1$. These classes form extension systems $\mathcal{O}_m^{\pm}\subset \mathcal{O}_n^{\pm}$, $n\geq m\geq 0$.
The work \cite{A} shows that the iteration kernel of two operators \eqref{1.1} and \eqref{1.2} from the classes $\mathcal{O}_n^{\pm}$ and $\mathcal{O}_m^ {\pm}$ belong to the class $\mathcal{O}_{n+m+1}^{\pm}$, i.e., the class system $\{\mathcal{O}_n^{\pm},~n\geq 0\}$ is closed with respect to iteration. This property makes it possible to study various problems where iterations of integral operators are involved, in particular, the above problem of weight estimates for quasilinear operators with iteration of operators \eqref{1.1} and \eqref{1.2}.

The boundedness from $L_p (I)$ to $L_q (I)$ of operators \eqref{1.1} and \eqref{1.2} with kernels from  $\mathcal{O}_n^{+}$ or $\mathcal{O}_n^{-}$, $n>1$,  has not been established for  $1<q<p<\infty$. In the paper \cite{AOP}, under some assumptions on kernels criteria of boundedness of operators \eqref{1.1} and \eqref{1.2} from $L_p (I)$ to $L_q(I)$ for $1<q<p<\infty$ were obtained when their kernels belong to the classes $\mathcal{O}_1^{+}$ or $\mathcal{O}_1^{-}$.

The main aim of this paper is to establish criteria for the boundedness of operators \eqref{1.1} and \eqref{1.2} from $L_p (I)$ to $L_q(I)$ for $1<q<p<\infty$, when their kernels belong to the classes $O_n^{\pm}$, $n\geq 2$. To achieve this aim, we need to find the validity of inequalities   \eqref{1.3+} and \eqref{1.3-} for $1<q<p<\infty$ for more general operators than operators \eqref{1.1} and \eqref{1.2}, when their kernels belong to the classes  $\mathcal{O}_n^{\pm}$.
The specific form of these operators and in which spaces they are considered will be given later. Now, let us define the classes $\mathcal{O}_n^{\pm}$, $n\geq 0$.
Let $\Omega = \{(x,s)\in I\times I: x\geq s>0\}$.

\begin{definition} A measurable function $K(\cdot,\cdot)\equiv K_0(\cdot,\cdot)\geq 0$ defined on the set $\Omega $ belongs to the class $\mathcal{O}_0^{+}$ ($\mathcal{O}_0^{-}$) if  $ K_0(x,s)\equiv r(s)\geq 0$  ($ K_0(x,s)\equiv \bar{r}(x)\geq 0$).
\end{definition}
\begin{definition} A measurable function $K(\cdot,\cdot)\equiv K_n(\cdot,\cdot)\geq 0$ defined on the set $\Omega $ belongs to the class $\mathcal{O}_n^{+}$, $n\geq 1$, if the function $K(\cdot,\cdot)\geq 0$ does not decrease in the first argument and there exists a non-negative functions  $K_{n,i}(\cdot,\cdot)$, $i=0,1,...,n-1$, and $K_i(\cdot,\cdot)$, $i=0,1,...,n-1$, measurable on $\Omega$ and a number $h_n \geq 1$ such that $K_i(\cdot,\cdot)\in \mathcal{O}_i^{+}$, $i=0,1,...,n-1$, and
\begin{equation}\label{1.4}
h_n^{-1} \sum\limits_{i=0}^{n}K_{n,i}(x,t)K_i(t,s) \leq K_n(x,s) \leq  h_n \sum\limits_{i=0}^{n}K_{n,i}(x,t)K_i(t,s)
\end{equation}
for all $x,t,s: x \geq t \geq s>0$, where $K_{n,n}(\cdot,\cdot)\equiv 1$,  $n\geq 1$.
\end{definition}
\begin{definition} A measurable function $K(\cdot ,\cdot )\equiv K_n(\cdot ,\cdot )\geq 0$ defined on the set $\Omega $ belongs to the class $\mathcal{O}_n^{-}$, $n\geq 1$, if the function $K(\cdot ,\cdot )\geq 0$ does not increase in the second argument and there exists a non-negative functions $K_{i,n}(\cdot,\cdot)$, $i=0,1,...,n-1$, and $K_i(\cdot,\cdot)$, $i=0,1,...,n-1$,  measurable on $\Omega$ and a number $\bar{h}_n \geq 1$ such that $K_i(\cdot,\cdot)\in \mathcal{O}_i^{-}$, $i=0,1,...,n-1$, and
\begin{equation}\label{1.5}
\bar{h}_n^{-1} \sum\limits_{i=0}^{n}K_{i}(x,t)K_{i,n}(t,s) \leq K_n(x,s) \leq  \bar{h}_n \sum\limits_{i=0}^{n}K_{i}(x,t)K_{i,n}(t,s) \end{equation}
for all $x,t,s: x \geq t \geq s>0$, where $K_{n,n}(\cdot,\cdot)\equiv 1$,  $n\geq 1$.
\end{definition}

From \eqref{1.4} for $n=1$, i.e., when $K(\cdot,\cdot)\equiv K_1(\cdot,\cdot)\in \mathcal{O}^+_1$, we have
 \begin{equation}\label{1.4.1}
 h_1^{-1}\left(K_{1,0}(x,t)r_1(s)+K_1(t,s)\right)\leq K_1(x,s)\leq h_1\left(K_{1,0}(x,t)r_1(s)+K_1(t,s)\right)
 \end{equation}
for all $x,t,s: x \geq t \geq s>0$.

From \eqref{1.5} for $n=1$, i.e., when $K(\cdot,\cdot)\equiv K_1(\cdot,\cdot)\in \mathcal{O}^-_1$, we have
$$
 \bar{h}_1^{-1}\left(K_{1}(x,t)+\bar{r}_1(x)K_{0,1}(t,s)\right)\leq K_1(x,s)\leq \bar{h}_1\left(K_{1}(x,t)+\bar{r}_1(x)K_{0,1}(t,s)\right)
$$
for all $x,t,s: x \geq t \geq s>0$.

We can similarly write the analogues of relations \eqref{1.4.1}  and \eqref{1.5} for $n=2$.

\begin{remark}  There are many examples of kernels $K(\cdot, \cdot)\geq 0$ that belong to the classes $\mathcal{O}_n^{\pm}$, $n\geq 1$, some of them are given in \cite{O3} and \cite{O4}. Moreover, in \cite{A}, it is proved that the kernels of the superpositions  of two operators \eqref{1.1} and \eqref{1.2} with the kernels $K(\cdot ,\cdot )\in \mathcal{O}_n^{\pm}$ and $K(\cdot ,\cdot )\in \mathcal{O}_m^{\pm}$ belong to the class $\mathcal{O}_{n+m+1}^{\pm}$.
\end{remark}
\begin{remark} \label{r2.0} As shown in \cite{O3}, without loss of generality, the functions $K_{n,i}(\cdot,\cdot)$ and  $K_{i,n}(\cdot,\cdot)$, $i=0,1,...,n-1$, $n\geq 1$, can be considered as non-decreasing in the first argument and non-increasing in the second argument. It is also shown that for any $n,j,i:n\geq j\geq i\geq 0$ we have that $K_{n,j}(x,t)K_{j,i}(t,s)\leq K_{n,i}(x,s)$ and $K_{i,j}(x,t)K_{j,n}(t,s)\leq K_{i,n}(x,s)$ hold for $x\geq t\geq s>0$.
\end{remark}

\begin{remark}\label{r2.1} Let $\varphi$ be a non-decreasing function on $I$. Without loss of generality, we will assume that the function $\varphi$ is right-continuous, replacing $\varphi(t)$ by $\varphi(t+0)$ if necessary. Then it is known (see \cite[Chapter 12]{R}) that there exists a Borel measure $\eta$ such that $\varphi(t)=\int\limits_{[0,t]}d\eta(x)$. If $\varphi(0)=0$, then   $\varphi(t)=\int\limits_{(0,t]}d\eta(x)$.
\end{remark}
\begin{remark}\label{r2.2} Let $ \psi$ be a non-increasing function on $I$. Without loss of generality, we will assume that the function $\psi$ is left-continuous, replacing $\psi(t)$ by $\psi(t-0)$ if necessary. Then it is known (see \cite[Chapter 12]{R}) that there exists a Borel measure $\theta$ such that $\psi(t)=\int\limits_{[t,\infty]}d\theta(x)$. If $\lim\limits_{t\to\infty}\psi(t)=\psi(\infty)=0$, then   $\psi(t)=\int\limits_{[t,\infty)}d\theta(x)$.
\end{remark}
Below, in the statements of Theorems, instead of $d\eta(\cdot)$ and $d\theta(\cdot)$, we will respectively write $d\varphi(\cdot)$ and $d(-\psi(\cdot))$, which indicates that the measures are generated by the functions $\varphi$ and $\psi$.

The paper is organized as follows: In Section 2, we establish the boundedness of operators \eqref{1.1} and \eqref{1.2} from $L_p (I)$ to $L_q(I)$ in the case $1<q<p<\infty$ when their kernels belong to the classes $\mathcal{O}_1^{\pm}$. In Section 3, we obtain similar results for operators \eqref{1.1} and \eqref{1.2} when their kernels belong to the classes $\mathcal{O}_n^{\pm}$, $n\geq 2$.

Throughout the paper, the symbol $A\ll B$ means that $A\leq CB$ with some constant $C>0$. The symbol $A\approx B$ means that $A\ll B\ll A$. Moreover, $\chi_{(a,b)}$ stands for the characteristic function of the interval $(a,b)\subset \mathbb{R}$ and $\mathbb{Z}$ is the set of integers.

\section{Integral operators with kernels from the class $\mathcal{O}_1^{\pm}$}

Let $\Psi$ be the set of nonnegative Borel measures  on $I$. Suppose that $\mu\in \Psi$.  We consider the integral operators
\begin{equation}\label{eq2.1}
\mathbf{K^+}f(x)=\int\limits_0^x K(x, s)v(s)f(s)ds, ~x>0,
\end{equation}
\begin{equation}\label{eq2.2}
\mathbf{K^-}g(s)=\int\limits_{[s,\infty]} K(x, s)v(s)g(x)d\mu(x),~s>0.
\end{equation}
Let  $L_{q, \mu}(I)$ be a space of $\mu$-measurable functions on $I$, for which
$$\|g\|_{q, \mu}=\left(\int\limits_{[0,\infty]}|g(x)|^qd\mu(x)\right)^{\frac{1}{q}}<\infty.$$

Operator \eqref{eq2.1} acts from $L_{p}(I)$ to $L_{q, \mu}(I)$, and operator \eqref{eq2.2} acts from  $L_{p, \mu}(I)$ to $L_{q}(I)$. It is easy to see that operator \eqref{eq2.2} is dual to operator \eqref{eq2.1} from
$L_{q',\mu}(I)$ to $L_{p'}(I)$, and \eqref{eq2.1} is dual to operator \eqref{eq2.2} from $L_{q'}(I)$ to $L_{p',\mu}(I)$ with respect to the linear form $\int\limits_0^\infty f(x)g(x)d\mu(x)$.

\begin{theorem}\label{2.1} Let $1<q<p<\infty$,  $\mu\in \Psi$ and $K(\cdot,\cdot)\equiv K_1(\cdot, \cdot)\in \mathcal{O}_1^{+} $. Then operator \eqref{eq2.1} is bounded from  $L_p(I)$ to $L_{q, \mu} (I)$ if and only if $B^+_1=\max\{B^+_{1,0}, B^+_{1,1}\}<\infty$. Moreover, $\|\mathbf{K}^+\|_{p\rightarrow q}\approx B^+_1$,  where $\|\mathbf{K}^+\|_{p\rightarrow q}$ is the norm of operator \eqref{eq2.1} from $L_p(I)$ to $L_{q, \mu} (I)$, where
$$
B^+_{1,0}=\left(\int\limits_0^\infty\left(\int\limits_{[z, \infty]} K_{1, 0}^q (x, z) d\mu(x)\right)^{\frac{p}{p-q}} \left(\int\limits_0^z r_1^{p'}(s)v^{p'}(s)ds\right)^{\frac{p(q-1)}{p-q}}r_1^{p'}(z)v^{p'}(z)dz\right)^{\frac{p-q}{pq}},
$$
$$
B^+_{1,1}=\left(\int\limits_{(0,\infty]}\left(\mu\left([x, \infty]\right)\right)^{\frac{p}{p-q}} \left(\int\limits_0^x K_1^{p'}(x, s)v^{p'}(s)ds\right)^{\frac{p(q-1)}{p-q}} d\left(\int\limits_0^x K_1^{p'}(x, t)v^{p'}(t)dt\right)\right)^{\frac{p-q}{pq}}.
$$
\end{theorem}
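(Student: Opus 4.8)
The plan is to establish the two one-sided estimates $\|\mathbf{K^+}\|_{p\to q}\ll B^+_1$ (sufficiency) and $B^+_1\ll\|\mathbf{K^+}\|_{p\to q}$ (necessity) separately; together they give both the stated criterion and the equivalence $\|\mathbf{K^+}\|_{p\to q}\approx B^+_1$. Put $r=\frac{pq}{p-q}$, so that $\frac1r=\frac1q-\frac1p$, and set $R(z)=\int_0^z r_1^{p'}(s)v^{p'}(s)\,ds$ and $\Phi(x)=\int_0^x K_1^{p'}(x,s)v^{p'}(s)\,ds$; both are non-decreasing, the latter because $K_1$ does not decrease in the first argument. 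The single structural fact that drives everything is that, for $f\ge 0$, the function $F:=\mathbf{K^+}f$ is non-decreasing on $I$ (again because $K_1$ is non-decreasing in its first variable), so that $\|\mathbf{K^+}f\|_{q,\mu}^q=\int_{[0,\infty]}F^q\,d\mu$ is the $\mu$-integral of a monotone function. Finally, since $\frac{p(q-1)}{p-q}=\frac r{q'}=\frac r{p'}-1$, the quantities $B^+_{1,0}$ and $B^+_{1,1}$ can be rewritten (up to constants depending only on $p,q$) in the Maz'ya forms $\bigl(\int_0^\infty(\int_{[z,\infty]}K_{1,0}^q(x,z)\,d\mu(x))^{r/q}\,d(R(z)^{r/p'})\bigr)^{1/r}$ and $\bigl(\int_{(0,\infty]}(\mu[x,\infty])^{r/q}\,d(\Phi(x)^{r/p'})\bigr)^{1/r}$; I would work with these forms.

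\emph{Necessity.} Fix $f\ge0$. For $B^+_{1,1}$: fix $y>0$; by monotonicity of $K_1$ in the first variable, $F(x)\ge\int_0^y K_1(y,s)v(s)f(s)\,ds$ for every $x\ge y$, hence $\|\mathbf{K^+}f\|_{q,\mu}\ge(\mu[y,\infty])^{1/q}\int_0^y K_1(y,s)v(s)f(s)\,ds$. Running Maz'ya's standard ``block'' argument — test with functions supported on $(0,y)$ and proportional to $(K_1(y,\cdot)v)^{p'-1}$, then integrate the resulting family of lower bounds in $y$ against $d(\Phi(y)^{r/p'})$ — yields $B^+_{1,1}\ll\|\mathbf{K^+}\|_{p\to q}$. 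For $B^+_{1,0}$: relation \eqref{1.4.1} gives $K_1(x,s)\ge h_1^{-1}K_{1,0}(x,t)r_1(s)$ for $s\le t\le x$, so $F(x)\ge h_1^{-1}K_{1,0}(x,t)\int_0^t r_1(s)v(s)f(s)\,ds$ for $x\ge t$; the same test-function argument with $f$ concentrated on $(0,t)$ and proportional to $(r_1v)^{p'-1}$, integrated in $t$ against $d(R(t)^{r/p'})$, gives $B^+_{1,0}\ll\|\mathbf{K^+}\|_{p\to q}$.

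\emph{Sufficiency.} Assume $B^+_1<\infty$, fix $f\ge0$ with $\|f\|_p=1$, and put $F=\mathbf{K^+}f$. Using that $F$ is non-decreasing, take $x_k=\inf\{x:F(x)>2^k\}$ (for $k$ in an interval of $\mathbb{Z}$, with the usual conventions) and $\mu_k=\mu([x_k,x_{k+1}))=\mu([x_k,\infty])-\mu([x_{k+1},\infty])$, so $\|\mathbf{K^+}f\|_{q,\mu}^q\ll\sum_k 2^{kq}\mu_k$. Fix an integer $m=m(h_1)$ with $2^m>h_1$. For each $k$, split $F(x_{k+1})=\int_0^{x_{k-m}}K_1(x_{k+1},s)vf\,ds+\int_{x_{k-m}}^{x_{k+1}}K_1(x_{k+1},s)vf\,ds$, apply the upper bound in \eqref{1.4.1} with intermediate point $x_{k-m}$ to the first integral, and absorb the resulting term $h_1 F(x_{k-m})\le h_1 2^{k-m}<2^k$ into $F(x_{k+1})\ge 2^{k+1}$; this gives
$$
2^{k}\ll K_{1,0}(x_{k+1},x_{k-m})\,\Bigl(\int_0^{x_{k-m}}r_1(s)v(s)f(s)\,ds\Bigr)+\int_{x_{k-m}}^{x_{k+1}}K_1(x_{k+1},s)v(s)f(s)\,ds=:P_k+Q_k,
$$
so that $\|\mathbf{K^+}f\|_{q,\mu}^q\ll\sum_k P_k^q\mu_k+\sum_k Q_k^q\mu_k$. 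For the $Q$-sum I would use Hölder, $Q_k\le a_k^{1/p'}b_k$ with $a_k=\int_{x_{k-m}}^{x_{k+1}}K_1^{p'}(x_{k+1},s)v^{p'}\,ds$ and $b_k^p=\int_{x_{k-m}}^{x_{k+1}}f^p$, observe $a_k\le\Phi(x_{k+1})-\Phi(x_{k-m})$ (a genuine increment of $\Phi$, since $K_1(x_{k+1},\cdot)\ge K_1(x_{k-m},\cdot)$) and $\sum_k b_k^p\ll\|f\|_p^p=1$ by bounded overlap of the intervals $(x_{k-m},x_{k+1})$, and then a discrete Hölder inequality with exponents $p/q$ and $p/(p-q)$ bounds $\sum_k Q_k^q\mu_k$ by a constant times $\bigl(\sum_k (\Phi(x_{k+1})-\Phi(x_{k-m}))^{r/p'}\mu_k^{r/q}\bigr)^{q/r}$, which should be $\ll (B^+_{1,1})^q$. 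For the $P$-sum I would similarly write $\int_0^{x_{k-m}}r_1vf\le R(x_{k-m})^{1/p'}(\int_0^{x_{k-m}}f^p)^{1/p}$ and, after a discrete Hölder and using the monotonicity of $K_{1,0}$ in its first argument (Remark \ref{r2.0}) together with $R(x_{k-m})$ being an increment of $R$, reduce $\sum_k P_k^q\mu_k$ to an expression $\ll(B^+_{1,0})^q$.

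The step I expect to be the main obstacle is the last passage in each case: converting the discrete bilinear sums $\sum_k(\Phi(x_{k+1})-\Phi(x_{k-m}))^{r/p'}\mu_k^{r/q}$ (and its $P$-analogue) into the Maz'ya integrals defining $B^+_{1,1}$ and $B^+_{1,0}$. The numerology matches — the exponents $r/p'$ and $r/q$ are forced by the choice of the discrete Hölder exponents $p/q,\,p/(p-q)$ and by $r/p'=1+\frac{p(q-1)}{p-q}$ — but to turn a sum over a geometric sequence into $\int(\mu[x,\infty])^{r/q}\,d(\Phi(x)^{r/p'})$ one needs the standard ``gluing''/antichain discretization lemma for the case $q<p$ (of the kind used in \cite{O3} and in Hardy-inequality theory), exploiting that both factors in each summand are increments of monotone functions and that the shift by $m=m(h_1)$ produces only bounded overlap. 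A secondary technical point is keeping track of jumps of $F$, $\Phi$ and $\mu[\cdot,\infty]$ when defining the sequence $\{x_k\}$, and the fact that the $P$-part is an operator with the merely monotone kernel $K_{1,0}$ rather than an $\mathcal{O}$-kernel, so it must be handled by the same discretization rather than by a known criterion; both of these are routine in this circle of ideas.
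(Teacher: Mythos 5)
Your sufficiency argument is essentially the paper's own: discretization by level sets of $F$ with a shift controlled by $h_1$, the splitting of $F(x_{k+1})$ via the upper bound in \eqref{1.4.1}, H\"older in the integral and then in the sum with exponents $\frac p q$, $\frac p{p-q}$, and a final conversion of the discrete sums into the integral quantities. The conversion step you flag as the ``main obstacle'' is exactly what the paper does via Lemma~2 of \cite{P2} (the elementary power-expansion $\bigl(\sum_{j\ge k}a_j\bigr)^{\lambda}\approx\sum_{j\ge k}a_j\bigl(\sum_{l\ge j}a_l\bigr)^{\lambda-1}$), the relation $\mu^{\frac p{p-q}}(\bar I_i)\approx\int_{[x_i,x_{i+1}]}\mu^{\frac q{p-q}}([x,x_{i+1}])\,d\mu(x)$, the measures $\eta_1$, $\eta$ of Remarks \ref{r2.1}--\ref{r2.2}, and Fubini; for the $P$-part the paper simply applies the known Hardy-with-measures criterion (Theorem 2 of \cite[\S 1.3.3]{Maz2}) to the discrete measure $d\eta(t)=\sum_i\int_{\bar I_i}K_{1,0}^q(x,x_{i-2})d\mu(x)\,\delta(t-x_{i-2})dt$ and then bounds $\eta([x,\infty])\ll\int_{[x,\infty]}K_{1,0}^q(t,x)\,d\mu(t)$. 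So that half is fine, if less detailed than the paper at the same place.

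The necessity half, as written, has a genuine gap. Testing with one function $f_y\propto(K_1(y,\cdot)v)^{p'-1}\chi_{(0,y)}$ for each fixed $y$ only yields the family of scalar bounds $(\mu[y,\infty])^{1/q}\Phi(y)^{1/p'}\ll\|\mathbf{K^+}\|_{p\to q}$, i.e.\ the Muckenhoupt-type supremum condition appropriate for $p\le q$; ``integrating the resulting family of lower bounds in $y$ against $d(\Phi(y)^{r/p'})$'' is not a legitimate operation and cannot produce the integrated condition $B^+_{1,1}<\infty$ --- indeed the supremum condition can hold while $\int(\mu[y,\infty])^{r/q}d(\Phi(y)^{r/p'})=\infty$ (e.g.\ when $\mu([y,\infty])\Phi(y)^{q/p'}$ is constant and $\Phi$ is unbounded), so no argument of this shape can work. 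For $1<q<p$ the necessity requires either a single global (or block-combined, with coefficients chosen by duality of $\ell^{p/q}$ and $\ell^{p/(p-q)}$) test function whose insertion produces the whole integral, or a reduction to known characterizations; the latter is what the paper does: for $B^+_{1,0}$ it uses $K_1(x,s)\gg K_{1,0}(x,t)r_1(s)$, Fubini and the measure $\theta_1$ of Remark \ref{r2.2} to reduce to the Hardy inequality with measures and quotes Theorem 2 of \cite[\S 1.3.3]{Maz2}; for $B^+_{1,1}$ it passes to the dual operator, uses the monotonicity $K_1(t,s)\ge K_1(x,s)$ for $t\ge x\ge s$, Lemma~2 of \cite{P2} and the measure $\eta_1$ of Remark \ref{r2.1}, and quotes Theorem~3 of \cite{P2}. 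Note also that for $B^+_{1,1}$ the inner weight $\Phi(y)=\int_0^yK_1^{p'}(y,s)v^{p'}(s)ds$ depends on $y$ through the kernel, so even the correct block construction is not a routine copy of the pure-Hardy case (this is precisely why the paper dualizes); your sketch does not address this. To repair the proposal, either import the two cited measure-theoretic characterizations as the paper does, or write out the full block test-function construction with the $\ell^{p/(p-q)}$-normalized coefficients --- the one-line version you give does not suffice.
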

\begin{proof}  {\it  Necessity.} Let operator \eqref{eq2.1} be bounded from $L_p(I)$ to $L_{q, \mu}(I)$, $1<q<p<\infty$, with the norm $\|\mathbf{K}^+\|_{p\rightarrow q}<\infty$. Let $0\leq f\in L_p(I)$ have compact support. Using the relation $K_1(x, t)\gg   K_{1, 0} (x, s) r_1(t)$ for $x\geq s\geq t>0$, which follows from (\ref{1.4.1}), by Fubini's theorem, we have
$$
J=\int\limits_{[0,\infty]}\left(\mathbf{K}^+f(x)\right)^qd\mu(x)\approx\int\limits_{[0,\infty]}\int\limits_0^x K_1(x, s)v(s)f(s)\left(\int\limits_0^s K_1(x, t)v(t)f(t)dt\right)^{q-1}ds d\mu(x)
$$
$$
\gg  \int\limits_{[0,\infty]}\int\limits_0^x K_{1, 0}^q (x, s)r_1(s)v(s)f(s)\left(\int\limits_0^s r_1(t)v(t)f(t)dt\right)^{q-1} ds d\mu(x)
$$
\begin{equation}\label{F2.1}
=\int\limits_0^\infty r_1(s)v(s)f(s)\left(\int\limits_0^s r_1(t)v(t)f(t)dt\right)^{q-1}\int\limits_{[s, \infty]} K_{1, 0}^q(x, s)d\mu(x) ds.
\end{equation}
Since  the function $\int\limits_{[s, \infty]} K_{1, 0}^q(x, s)d\mu(x)$ does not increase on $I$,  then, by Remark \ref{r2.2}, there exists a Borel measure $\theta_1$  such that $\int\limits_{[s, \infty]} K_{1, 0}^q(x, s)d\mu(x)=\int\limits_{[s, \infty]}d\theta_1(t)$. Replacing this relation into \eqref{F2.1} and using Fubini's theorem, we deduce
$$
J=\int\limits_0^\infty r_1(s)v(s)f(s)\left(\int\limits_0^s r_1(t)v(t)f(t)dt\right)^{q-1}\int\limits_{[s, \infty]} d\theta_1(z) ds
$$
$$
=\int\limits_{[0,\infty]}\int\limits_0^z r_1(s)v(s)f(s)\left(\int\limits_0^s r_1(t)v(t)f(t)dt\right)^{q-1}ds\,d\theta_1(z)
$$
$$
\approx\int\limits_{[0,\infty]}\left(\int\limits_0^z r_1(t)v(t)f(t)dt\right)^{q} d\theta_1(z).
$$
Then
$$
\left(\int\limits_{[0,\infty]}\left(\int\limits_0^z r_1(t)v(t)f(t)dt\right)^{q} d\theta_1(z)\right)^{\frac{1}{q}}\ll \|\mathbf{K^+}\|_{p\rightarrow q}\left(\int\limits_0^\infty f^p(t)dt\right)^{\frac{1}{p}}.
$$
Hence, by Theorem 2 from \cite[\S 1.3, 1.3.3]{Maz2}, we get
\begin{equation}\label{eq2.3}
B^+_{1, 0}\ll \|\mathbf{K^+}\|_{p\rightarrow q}.
\end{equation}
From the boundedness of operator \eqref{eq2.1} from $L_p(I)$ to $L_{q, \mu} (I)$ it follows the boundedness of dual operator \eqref{eq2.2} from $L_{q', \mu} (I)$ to $L_{p'}(I)$, i.e., we have
\begin{equation}\label{eq2.4}
\left(\int\limits_0^\infty\left(\int\limits_{[s, \infty]} K_1(x, s) v(s)g(x)d\mu(x)\right)^{p'}ds\right)^{\frac{1}{p'}} \ll \|\mathbf{K^+}\|_{p\rightarrow q}\left(\int\limits_{[0,\infty]} |g(t)|^{q'}d\mu(t)\right)^{\frac{1}{q'}}
\end{equation}
for $0\leq g\in L_{q', \mu}(I)$ having compact support.
The non-decrease in the first argument of the function $K_1(\cdot, \cdot)\in \mathcal{O}^+_1$ gives that $K_1(t, s)\geq K_1(x, s)$ for $t\geq x\geq s>0$. This fact and Lemma 2 from \cite{P2} imply that
$$
J^-=\int\limits_0^\infty(\mathbf{K}^-g(s))^{p'}ds\approx\int\limits_0^\infty\int\limits_{[s, \infty]} K_1(x, s)v^{p'}(s)g(x)\left(\int\limits_{[x, \infty]} K_1(t, s)g(t)d\mu(t)\right)^{p'-1}d\mu(x)ds
$$
$$
\geq\int\limits_0^\infty v^{p'}(s)\int\limits_{[s, \infty]} K_1^{p'}(x, s)g(x)\left(\int\limits_{[x, \infty]}g(t)d\mu(t)\right)^{p'-1}d\mu(x)ds
$$
\begin{equation}\label{F2.2}
=\int\limits_{[0,\infty]} g(x)\left(\int\limits_{[x, \infty]} g(t)d\mu(t)\right)^{p'-1}\int\limits_0^x K_1^{p'}(x, s)v^{p'}(s)ds\,d\mu(x).
\end{equation}
The function $\int\limits_0^x K_1^{p'}(x, s)v^{p'}(s)ds $ does not decrease on $I$ and $\lim\limits_{x\to 0}\int\limits_0^x K_1^{p'}(x, s)v^{p'}(s)ds =0$ due to \eqref{F2.2}. Therefore, by Remark  \ref{r2.1}, there exists a Borel measure $\eta_1$ such that $\int\limits_0^x K_1^{p'}(x, s)v^{p'}(s)ds =\int\limits_{(0,x]}d\eta_1(t)$. Replacing this relation into \eqref{F2.2} and using Fubini's theorem, we obtain
$$
J^-=\int\limits_{[0,\infty]} g(x)\left(\int\limits_{[x, \infty]}g(t)d\mu(t)\right)^{p'-1}\int\limits_{(0, x]} d\eta_1(z)d\mu(x)
\approx\int\limits_{(0,\infty]}\left(\int\limits_{[z, \infty]} g(t)d\mu(t)\right)^{p'} d\eta_1(z).
$$
The latter and \eqref{eq2.4} give that
$$
\left(\int\limits_{(0,\infty]}\left(\int\limits_{[z, \infty]} g(t)d\mu(t)\right)^{p'} d\eta_1(z)\right)^{\frac{1}{p'}}\ll \|\mathbf{K^+}\|_{p\rightarrow q} \left(\int\limits_{[0,\infty]}|g(t)|^{q'}d\mu(t)\right)^{\frac{1}{q'}}.
$$
Then, on the basis of Theorem 3 from \cite{P2}, we have $B^+_{1, 1}\ll \|\mathbf{K^+}\|_{p\rightarrow q}$, which, together with  \eqref{eq2.3}, implies that
\begin{equation}\label{eq2.5}
\|\mathbf{K^+}\|_{p\rightarrow q}\gg  B^+_1.
\end{equation}

{\it Sufficiency.} Let $B^+_1<\infty$ and $K(\cdot, \cdot)\equiv K_1(\cdot, \cdot)\in \mathcal{O}_1^+$. Let $0\leq f\in L_p(I)$ have compact support. Hence, the function $F(x)=\int\limits_0^x K_1(x, s)v(s)f(s)ds$ is non-negative, non-decreasing and turns to zero at $x\rightarrow 0^+$. Let $\alpha_f=\inf\{x>0: F(x)>0\}$. Then $F(x)>0$ for $x>\alpha_f$. Then, without loss of generality, we assume that $\alpha_f=0$.

We define the function (see \cite{O3})
$$
k(x)=\max\{k\in \mathbb{Z}: (h_1+1)^k\leq F(x),  x\in I\},
$$
where $h_1$ from (\ref{1.4.1}). The function $k(x)$ is non-decreasing and
$$
(h_1+1)^{k(x)}\leq F(x)\leq (h_1+1)^{k(x)+1}, ~~ \forall x\in I.
$$
Let $\{k_i\}$, $(k_i<k_{i+1})$, be the domain of the function $k: I\rightarrow \mathbb{Z}$ and $I_i$ is the preimage of the point $k_i\in \mathbb{Z}$, i.e., $I_i=\{x\in I: k(x)=k_i\}$. Then
\begin{equation}\label{eq2.6}
I=\bigcup\limits_i I_i.
\end{equation}
Assume that $x_i=\inf I_i$. It is obvious that $x_i<x_{i+1}$ and $I_i=[x_i, x_{i+1})$. Therefore,
\begin{equation}\label{eq2.7}
(h_1+1)^{k_i}\leq F(x)< (h_1+1)^{k_i+1}~~\text{for}~~x_i\leq x<x_{i+1}.
\end{equation}
Since $k_i<k_{i+1}$, then $k_i-1\geq k_{i-1}\geq k_{i-2}+1$ and
$$
(h_1+1)^{k_i-1}=(h_1+1)^{k_i}-h_1(h_1+1)^{k_i-1}\leq (h_1+1)^{k_i}-h_1(h_1+1)^{k_{i-2}+1}\leq F(x_i)-h_1 F(x_{i-2})
$$
$$
\leq\int\limits_{x_{i-2}}^{x_i} K_1(x_i, s)v(s)f(s)ds+\int\limits_0^{x_{i-2}}[K_1(x_i, s)-h_1K_1(x_{i-2}, s)]v(s)f(s)ds
$$
\begin{equation}\label{eq2.8}
\leq\int\limits_{x_{i-2}}^{x_i} K_1(x_i, s)v(s)f(s)ds+h_1 K_{1, 0}(x_i, x_{i-2})\int\limits_0^{x_{i-2}}r_1(s)v(s)f(s)ds.
\end{equation}
In the last relation, we have applied the right-hand side of (\ref{1.4.1}) to $K_1(x_i, s)-h_1K_1(x_{i-2}, s)$. In view of \eqref{eq2.6}, \eqref{eq2.7}  and \eqref{eq2.8}, assuming that $\bar{I}_i=[x_i,x_{i+1}]$, we get
$$
J=\int\limits_{[0,\infty]}(\mathbf{K^+}f(x))^qd\mu(x)\ll\sum\limits_i\int\limits_{\bar{I}_i}F^q(x)d\mu(x)\ll \sum\limits_i(h_i+1)^{q(k_i-1)}\mu(\bar{I}_i)
$$
$$
\ll \sum\limits_i\mu(\bar{I}_i)\left(\int\limits_{x_{i-2}}^{x_i}K_1(x_i, s)v(s)f(s)ds\right)^q+\sum\limits_i\mu(\bar{I}_i)K_{1, 0}^q(x_i, x_{i-2})\left(\int\limits_0^{x_{i-2}}r_1(s)v(s)f(s)ds\right)^q
$$
$$
\leq \sum\limits_i\mu(\bar{I}_i)\left(\int\limits_{x_{i-2}}^{x_i}K_1(x_i, s)v(s)f(s)ds\right)^q+\sum\limits_i\int\limits_{\bar{I}_i}K_{1, 0}^q(x, x_{i-2})d\mu(x)\left(\int\limits_0^{x_{i-2}}r_1(s)v(s)f(s)ds\right)^q
$$
\begin{equation}\label{eq2.9}
=J_{1, 1}+J_{1, 0}.
\end{equation}
We separately estimate $J_{1, 0}$ and $J_{1, 1}$. To estimate $J_{1, 1}$ we use H\"{o}lder's inequality in the integral, then in the sum with the parameters $\frac{p}{q}$ and   $\frac{p}{p-q}$:
$$
J_{1, 1}\leq \sum\limits_i\mu(\bar{I}_i)\left(\int\limits_{x_{i-2}}^{x_i}K_1^{p'}(x_i, s)v^{p'}(s)ds\right)^{\frac{q}{p'}}\left(\int\limits_{x_{i-2}}^{x_i}|f(t)|^pdt\right)^{\frac{q}{p}}
$$
\begin{equation}\label{eq2.10}
\leq\left(\sum\limits_i\mu^{\frac{p}{p-q}}(\bar{I}_i)\left(\int\limits_{x_{i-2}}^{x_i}K_1^{p'}(x_i, s)v^{p'}(s)ds\right)^{\frac{q(p-1)}{p-q}}\right)^{\frac{p-q}{p}}\left(\sum\limits_i\int\limits_{x_{i-2}}^{x_i}|f(t)|^pdt\right)^{\frac{q}{p}}\ll \hat{B}_{1, 1}\|f\|_p^q,
\end{equation}
where
$$
\hat{B}_{1, 1}=\left(\sum\limits_i\mu^{\frac{p}{p-q}}(\bar{I}_i)\left(\int\limits_{x_{i-2}}^{x_i}K_1^{p'}(x_i, s)v^{p'}(s)ds\right)^{\frac{q(p-1)}{p-q}}\right)^{\frac{p-q}{p}}.
$$
Using Lemma 2 from \cite{P2} and the relation
$\mu^{\frac{p}{p-q}}(\bar{I}_i) \approx \int\limits_{[x_i,x_{i+1}]}\mu^{\frac{q}{p-q}}([x,x_{i+1}])d\mu(x)$,
then  using as above the relation $\int\limits_0^{x}K_1^{p'}(x, s)v^{p'}(s)ds=\int\limits_{(0,x]}d\eta_1(z)$ and Fubini's theorem, we obtain
$$
\hat{B}_{1, 1}\ll \left(\sum\limits_i\int\limits_{[x_i,x_{i+1}]}\mu^{\frac{q}{p-q}}([x,\infty])\left(\int\limits_0^{x}K_1^{p'}(x, s)v^{p'}(s)ds\right)^{\frac{q(p-1)}{p-q}} d\mu(x)\right)^{\frac{p-q}{p}}
$$
$$
\ll \left(\int\limits_{[0,\infty]}(\mu([x, \infty]))^{\frac{q}{p-q}}\left(\int\limits_{(0,x]}d\eta_1(z)\right)^{\frac{q(p-1)}{p-q}} d\mu(x)\right)^{\frac{p-q}{p}}
$$
$$
\approx\left(\int\limits_{[0,\infty]}(\mu([x, \infty]))^{\frac{q}{p-q}}\int\limits_{(0,x]}\left(\int\limits_{(0,s]}d\eta_1(z)\right)^{\frac{p(q-1)}{p-q}}d\eta_1(s) d\mu(x)\right)^{\frac{p-q}{p}}
$$
$$
\approx\left(\int\limits_{(0,\infty]}(\mu([s, \infty]))^{\frac{p}{p-q}}\left(\int\limits_{(0,s]}d\eta_1(z)\right)^{\frac{p(q-1)}{p-q}}d\eta_1(s) \right)^{\frac{p-q}{p}}
$$
$$
=\left(\int\limits_{(0,\infty]}(\mu([x, \infty]))^{\frac{p}{p-q}}\left(\int\limits_0^{x}K_1^{p'}(x, s)v^{p'}(s)ds\right)^{\frac{p(q-1)}{p-q}}d\left(\int\limits_0^{x}K_1^{p'}(x, s)v^{p'}(s)ds\right) \right)^{\frac{p-q}{p}}=(B^+_{1, 1})^q.
$$
The latter and \eqref{eq2.10} give that
\begin{equation}\label{eq2.11}
J_{1, 1}\ll (B^+_{1, 1})^q\|f\|_p^q.
\end{equation}
Now, we estimate $J_{1, 0}$:
$$
J_{1, 0}=\sum\limits_i\int\limits_{\bar{I}_i} K_{1, 0}^q(x, x_{i-2}) d\mu(x)\left(\int\limits_0^{x_{i-2}} r_1(s)v(s)f(s)ds\right)^q=\int\limits_{[0,\infty]}\left(\int\limits_0^t r_1(s)v(s)f(s)ds\right) d\eta(t),
$$
where $d\eta(t)=\sum\limits_i\int\limits_{\bar{I}_i}K_{1, 0}^q(x, x_{i-2}) d\mu(x)\delta(t-x_{i-2})dt$ and $\delta(\cdot)$  is the Dirac delta function. Then, by Theorem 2 from \cite[\S 1.3, 1.3.3]{Maz2}, we have
$$
J_{1, 0}\ll \hat{B}_{1, 0}\left(\int\limits_0^\infty|f(t)|^pdt\right)^{\frac{q}{p}},
$$
where
$$
\hat{B}_{1, 0}=\left(\int\limits_0^\infty (\eta([x, \infty]))^{\frac{p}{p-q}}\left(\int\limits_0^x r_1^{p'}(s)v^{p'}(s)ds\right)^{\frac{p(q-1)}{p-q}} r_1^{p'}(x)v^{p'}(x)dx\right)^{\frac{p-q}{p}}.
$$
Since $\eta([x, \infty])=\sum\limits_{i: x_{i-2}\geq x}\int\limits_{\bar{I}_i} K^q_{1, 0}(t, x_{i-2}) d\mu(t)\ll \int\limits_{[x, \infty]}  K_{1, 0}^q(t, x) d\mu(t)$,
then $\hat{B}_{1, 0}\ll (B^+_{1, 0})^q$. Therefore, $J_{1, 0}\ll (B^+_{1, 0})^q\|f\|_p^q$. The last estimate, together with \eqref{eq2.9} and \eqref{eq2.11}, gives that $J\ll \max\{B^+_{1, 1}, B^+_{1, 0}\}^q\|f\|_p^q$, i.e., operator \eqref{eq2.1} is bounded from $L_p(I)$ to $L_{q, \mu}(I)$ and for its norm the estimate $\|\mathbf{K^+}\|_{p\rightarrow q}\ll  B^+_1$ holds, which and \eqref{eq2.5} give that $\|\mathbf{K^+}\|_{p\rightarrow q}\approx B^+_1$. The proof of Theorem \ref{2.1} is complete.
\end{proof}

We can similarly prove the following statement.

\begin{theorem}\label{2.2} Let $1<q<p<\infty$,  $\mu\in \Psi$ and $K(\cdot,\cdot)\equiv K_1(\cdot, \cdot)\in \mathcal{O}_1^{-} $. Then operator \eqref{eq2.2} is bounded from  $L_{p,\mu}(I)$ to $L_{q} (I)$ if and only if $B^-_1=\max\{B^-_{0,1}, B^-_{1,1}\}<\infty$. Moreover, $\|\mathbf{K}^-\|_{p\rightarrow q}\approx B^-_1$, where  $\|\mathbf{K}^-\|_{p\rightarrow q}$ is the norm of operator \eqref{eq2.2} from $L_{p,\mu}(I)$ to $L_{q} (I)$, where
$$
B^-_{0,1}=\left(\int\limits_{(0,\infty]}\left(\int\limits_{[z, \infty]} \bar{r}^{p'}(t)d\mu(t)\right)^{\frac{q(p-1)}{p-q}} \left(\int\limits_0^z K^q_{0,1}(z,s)v^{q}(s)ds\right)^{\frac{q}{p-q}}d\left(\int\limits_0^z K^q_{0,1}(z,s)v^{q}(s)ds\right)\right)^{\frac{p-q}{pq}},
$$
$$
B^-_{1,1}=\left(\int\limits_0^\infty\left(\int\limits_{[z,\infty]}K^{p'}_1(x,z)d\mu(x)\right)^{\frac{q(p-1)}{p-q}} \left(\int\limits_0^z v^{q}(s)ds\right)^{\frac{q}{p-q}} v^{q}(z)dz\right)^{\frac{p-q}{pq}}.
$$
\end{theorem}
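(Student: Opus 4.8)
The plan is to follow the proof of Theorem \ref{2.1} with the roles of $\mathbf{K}^+$ and $\mathbf{K}^-$ interchanged and with the relations \eqref{1.5} (for $n=1$) that define $\mathcal{O}_1^-$ used in place of \eqref{1.4.1}. Since $\mathbf{K}^-$ is dual to $\mathbf{K}^+$, the two halves of the necessity argument should attach to the opposite operators: I expect $B^-_{1,1}$ to come out of the dual operator $\mathbf{K}^+$ and $B^-_{0,1}$ out of $\mathbf{K}^-$ itself. For sufficiency, the discretization is applied to the \emph{non-increasing} function $s\mapsto\int_{[s,\infty]}K(x,s)g(x)\,d\mu(x)$, using the level-function construction of \cite{O3} (here applied to this non-increasing function), in place of the non-decreasing $F$ of Theorem \ref{2.1}.

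\emph{Necessity.} First, for $B^-_{1,1}$ I would pass to the dual: boundedness of $\mathbf{K}^-$ from $L_{p,\mu}(I)$ to $L_q(I)$ is equivalent to boundedness of $\mathbf{K}^+f(x)=\int_0^x K(x,s)v(s)f(s)\,ds$ from $L_{q'}(I)$ to $L_{p',\mu}(I)$. Expanding the $p'$-th power and using that $K$ does not increase in the second variable (hence $\int_0^s K(x,t)v(t)f(t)\,dt\ge K(x,s)\int_0^s v(t)f(t)\,dt$), Fubini's theorem gives
$$\int_{[0,\infty]}\big(\mathbf{K}^+f(x)\big)^{p'}d\mu(x)\gg\int_0^\infty v(s)f(s)\Big(\int_0^s v(t)f(t)\,dt\Big)^{p'-1}\int_{[s,\infty]}K^{p'}(x,s)\,d\mu(x)\,ds;$$
representing the non-increasing function $s\mapsto\int_{[s,\infty]}K^{p'}(x,s)\,d\mu(x)$ as $\int_{[s,\infty]}d\theta_1(z)$ (Remark \ref{r2.2}) and using Fubini again reduces this to the weighted Hardy inequality $\big(\int_{[0,\infty]}(\int_0^z v(s)f(s)\,ds)^{p'}d\theta_1(z)\big)^{1/p'}\ll\|\mathbf{K}^-\|_{p\to q}\|f\|_{q'}$, to which I would apply Theorem 2 of \cite[\S 1.3, 1.3.3]{Maz2} with the parameter pair $(q',p')$ in place of $(p,q)$; this gives $B^-_{1,1}\ll\|\mathbf{K}^-\|_{p\to q}$. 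Second, for $B^-_{0,1}$ I would work with $\mathbf{K}^-$ directly: for $0\le g\in L_{p,\mu}(I)$ of compact support, expand the $q$-th power of $\int_{[s,\infty]}K(x,s)g(x)\,d\mu(x)$ via Lemma 2 of \cite{P2} and apply \eqref{1.5} twice — $K(t,s)\gg\bar r_1(t)K_{0,1}(x,s)$ for $t\ge x\ge s$ inside, and $K(x,s)\gg\bar r_1(x)K_{0,1}(x,s)$ for the outer factor — to obtain, after Fubini,
$$\|\mathbf{K}^-g\|_q^q\gg\int_{(0,\infty]}\bar r_1(x)g(x)\Big(\int_{[x,\infty]}\bar r_1(t)g(t)\,d\mu(t)\Big)^{q-1}\Big(\int_0^x K_{0,1}^q(x,s)v^q(s)\,ds\Big)d\mu(x).$$
Writing the non-decreasing function $x\mapsto\int_0^x K_{0,1}^q(x,s)v^q(s)\,ds$ as $\int_{(0,x]}d\eta_1(z)$ (Remark \ref{r2.1}), applying Fubini and re-collapsing the inner expansion, one reaches $\big(\int_{(0,\infty]}(\int_{[z,\infty]}\bar r_1(x)g(x)\,d\mu(x))^q d\eta_1(z)\big)^{1/q}\ll\|\mathbf{K}^-\|_{p\to q}\|g\|_{p,\mu}$, and the corresponding one-dimensional criterion — the weighted form of Theorem 3 of \cite{P2}, in which $\mu([z,\infty])$ is replaced by $\int_{[z,\infty]}\bar r_1^{p'}\,d\mu$ — yields $B^-_{0,1}\ll\|\mathbf{K}^-\|_{p\to q}$. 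Hence $\|\mathbf{K}^-\|_{p\to q}\gg B^-_1$.

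\emph{Sufficiency.} Assume $B^-_1<\infty$; let $0\le g\in L_{p,\mu}(I)$ have compact support and put $G(s)=\int_{[s,\infty]}K(x,s)g(x)\,d\mu(x)$. Discretizing $G$ as in \cite{O3} produces points $x_i\uparrow$ with $G\approx(\bar h_1+1)^{k_i}$ on $[x_i,x_{i+1})$ and $(\bar h_1+1)^{k_i-1}\le G(x_i)-\bar h_1G(x_{i+2})$; applying the right-hand inequality in \eqref{1.5} to $K(x,x_i)-\bar h_1K(x,x_{i+2})$ on $[x_{i+2},\infty]$ I would split
$$\|\mathbf{K}^-g\|_q^q\ll\sum_i\Big(\int_{x_i}^{x_{i+1}}v^q\Big)\Big(\int_{[x_i,x_{i+2})}K(x,x_i)g(x)\,d\mu(x)\Big)^q+\sum_i\Big(\int_{x_i}^{x_{i+1}}v^q\Big)K_{0,1}^q(x_{i+2},x_i)\Big(\int_{[x_{i+2},\infty]}\bar r_1(x)g(x)\,d\mu(x)\Big)^q=:J_{1,1}+J_{0,1}.$$
Then $J_{1,1}$ is treated by H\"older's inequality (exponents $p,p'$ in the $\mu$-integral and $p/q,\,p/(p-q)$ in the series) and is reduced to $(B^-_{1,1})^q\|g\|_{p,\mu}^q$, while $J_{0,1}$ is written as $\int_{(0,\infty]}(\int_{[t,\infty]}\bar r_1(x)g(x)\,d\mu(x))^q d\lambda(t)$ for a suitable discrete measure $\lambda$ and estimated by the same one-dimensional criterion used above, giving $(B^-_{0,1})^q\|g\|_{p,\mu}^q$. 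Together with the necessity this yields $\|\mathbf{K}^-\|_{p\to q}\approx B^-_1$.

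The hard part, just as in Theorem \ref{2.1}, will be the passage between the discrete sums and the integral quantities $B^-_{1,1},B^-_{0,1}$: it relies on identities of the type $a^{1+\theta}\approx\int a(\cdot)^{\theta}\,da$, on the monotonicity properties of $K_1,K_{0,1},\bar r_1$ from Remark \ref{r2.0}, and — most delicately — on arranging the intermediate points in \eqref{1.5} and the blocks of the discretization so that the kernel values $K_{0,1}(x_{i+2},x_i)$ entering $J_{0,1}$ are dominated by $\int_0^z K_{0,1}^q(z,s)v^q(s)\,ds$, where $z$ occurs both in the kernel and as the upper limit of integration.
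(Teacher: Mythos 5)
Your necessity argument is sound and is the natural mirror of the paper's proof of Theorem \ref{2.1}: $B^-_{1,1}$ indeed comes from the dual operator $\mathbf{K}^+:L_{q'}\to L_{p',\mu}$ via the Maz'ya criterion applied with the pair $(q',p')$ (the exponents $\tfrac{q(p-1)}{p-q}$, $\tfrac{q}{p-q}$, $\tfrac{p-q}{pq}$ check out), and $B^-_{0,1}$ comes from $\mathbf{K}^-$ itself via \eqref{1.5} and the weighted (three-measure) dual Hardy criterion. The genuine gap is in the sufficiency, in the orientation of your discretization. You take the level intervals of $G(s)=\int_{[s,\infty]}K(x,s)g(x)\,d\mu(x)$ closed on the left, $I_i=[x_i,x_{i+1})$, and run the key estimate with $G(x_i)-\bar h_1G(x_{i+2})$, so the kernel gets frozen at the second argument $x_i$ in $J_{1,1}$ and the factor $K_{0,1}(x_{i+2},x_i)$ appears in $J_{0,1}$, while the output variable $s$ (and the variable $z$ produced by the power rule applied to $\int_{x_i}^{x_{i+1}}v^q$) ranges over $[x_i,x_{i+1}]$, i.e.\ to the \emph{right} of $x_i$. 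The discrete-to-continuous comparisons you then need, namely $\int_{[x_i,x_{i+2})}K_1^{p'}(x,x_i)\,d\mu(x)\ll\int_{[z,\infty]}K_1^{p'}(x,z)\,d\mu(x)$ and $K_{0,1}(x_{i+2},x_i)\ll K_{0,1}(z,s)$ for $z,s\ge x_i$, go in the wrong direction: $K_1$ and $K_{0,1}$ are non-increasing in the second argument, and moreover $[x_i,x_{i+2})\not\subset[z,\infty]$ once $z>x_i$. So, as written, neither $J_{1,1}$ nor $J_{0,1}$ reduces to $(B^-_{1,1})^q\|g\|_{p,\mu}^q$, $(B^-_{0,1})^q\|g\|_{p,\mu}^q$; this is precisely the delicacy you flag at the end but do not resolve.

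The fix is to mirror the geometry of Theorem \ref{2.1}, not only its formulas: take the level intervals closed on the right, $I_i=(x_i,x_{i+1}]$, and bound, for $s\in I_i$, $G(s)\ll G(x_{i+1})-\bar h_1G(x_{i+3})\le\int_{[x_{i+1},x_{i+3})}K_1(x,x_{i+1})g\,d\mu+\bar h_1K_{0,1}(x_{i+3},x_{i+1})\int_{[x_{i+3},\infty]}\bar r_1 g\,d\mu$; the lower bound $(\bar h_1+1)^{k_i-1}\le G(x_{i+1})-\bar h_1G(x_{i+3})$ survives because $x_{i+1}\in I_i$ and $k_{i+2}\le k_i-2$. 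Now the kernel's second argument lies to the right of every $s\in I_i$, and all monotonicity comparisons point the right way: for $z\in I_i$ one has $K_1(x,x_{i+1})\le K_1(x,z)$ and $[x_{i+1},x_{i+3})\subset[z,\infty]$, so your H\"older/power-rule step does yield $J_{1,1}\ll(B^-_{1,1})^q\|g\|^q_{p,\mu}$; likewise $\bigl(\int_{I_i}v^q\bigr)K^q_{0,1}(x_{i+3},x_{i+1})\le\int_{I_i}K^q_{0,1}(x_{i+3},s)v^q(s)\,ds$, and placing the discrete measure at the points $x_{i+3}$ and using $K_{0,1}(x_{i+3},s)\le K_{0,1}(z,s)$ for $z\ge x_{i+3}$ gives $J_{0,1}\ll(B^-_{0,1})^q\|g\|^q_{p,\mu}$ by the same weighted criterion as in your necessity part. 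With this reorientation of the blocks and splitting points, your plan goes through exactly as in Theorem \ref{2.1}.
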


\section{Operators \eqref{1.1} and \eqref{1.2} with kernels from the classes $\mathcal{O}_n^{\pm}$}

In this Section, we establish the boundedness of operators \eqref{1.1} and \eqref{1.2} from $L_{p}(I)$ to $L_{q} (I)$, $1<q<p<\infty$, when their kernels belong to  the class  $\mathcal{O}_n^{\pm}$. It follows from Theorems \ref{2.1} and \ref{2.2} that in relation (\ref{1.4.1}) the function $r_1$ can be taken equal to unity, since it can be combined with the weight functions of operators \eqref{1.1} and \eqref{1.2}.
Therefore, further we suppose that $K_0(\cdot,\cdot)\equiv1$ in (\ref{1.4}), and (\ref{1.4}) has the form
$$
h_n^{-1} \left(K_{n,0}(x,t)+\sum\limits_{i=1}^{n-1}K_{n,i}(x,t)K_i(t,s)+K_n(t,s)  \right )\leq K_n(x,s)
$$
\begin{equation}\label{eq3.1}
\leq  h_n \left(K_{n,0}(x,t)+\sum\limits_{i=1}^{n-1}K_{n,i}(x,t)K_i(t,s)+K_n(t,s)\right), ~~ x\geq t\geq s>0.
\end{equation}

Assume that
\begin{multline*}
B^+_{n, i}=\left(\int\limits_{(0,\infty]}\left(\int\limits_{[z,\infty]} K_{n, i}^q(x, z) d\mu(x)\right)^{\frac{p}{p-q}}\left(\int\limits_0^z K_i^{p'}(z, s)v^{p'}(s)ds\right)^{\frac{p(q-1)}{p-q}}\right.\\
\left.\times d\left(\int\limits_0^z K^{p'}_i (z, t) v^{p'}(t)dt\right)\right)^{\frac{p-q}{pq}},~i=0,1,...,n.
\end{multline*}

\begin{theorem}\label{th3.1} Let $1<q<p<\infty$,  $\mu\in\Psi$ and $K(\cdot,\cdot)\equiv K_n(\cdot, \cdot)\in \mathcal{O}_n^{+}$, $n\geq1$. Then operator \eqref{eq2.1} is bounded from $L_{p}(I)$ to $L_{q,\mu}(I)$ if and only if $B^+_n=\max\limits_{0\leq i\leq n}B^+_{n, i}<\infty$. Moreover, $\|\mathbf{K^+}\|_{p\rightarrow q}\approx B^+_n$, where $\|\mathbf{K^+}\|_{p\rightarrow q}$ is the norm of operator \eqref{eq2.1} from $L_{p}(I)$ to $L_{q,\mu}(I)$.
In addition,
\begin{multline}\label{eq3.2}
B^+_{n, i}\approx
\left(\int\limits_{[0,\infty]}\left(\int\limits_0^z K_i^{p'}(z, s)v^{p'}(s)ds\right)^{\frac{q(p-1)}{p-q}}\left(\int\limits_{[z,\infty]} K_{n, i}^q(x, z)
 d\mu(x)\right)^{\frac{q}{p-q}}\right.\\
 \left.\times d\left(-\int\limits_{[z,\infty]} K_{n, i}^q(x, z) d\mu(x)\right)\right)^{\frac{p-q}{pq}},~i=0,1,...,n.
\end{multline}
\end{theorem}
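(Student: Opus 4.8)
The statement is the $\mathcal{O}_n^{+}$-analogue of Theorem~\ref{2.1}, so the natural approach is induction on $n$: Theorem~\ref{2.1} (with $r_1$ absorbed into $v$, i.e.\ $K_0\equiv1$, as noted) is the base case $n=1$, and for the inductive step one strips off the outermost layer of the decomposition \eqref{eq3.1} and reduces everything to the classes $\mathcal{O}_i^{+}$ with $i<n$. Throughout write $\mathbf{K}_i^{+}f(x)=\int_0^xK_i(x,s)v(s)f(s)\,ds$, so $\mathbf{K}^{+}=\mathbf{K}_n^{+}$, and set $\nu_i(z)=\int_{[z,\infty]}K_{n,i}^q(x,z)\,d\mu(x)$; since $K_{n,i}$ is non-increasing in its second variable (Remark~\ref{r2.0}) and the domain $[z,\infty]$ shrinks as $z$ grows, $\nu_i$ is non-increasing, so by Remark~\ref{r2.2} there is a Borel measure $\theta_i$ with $\theta_i([z,\infty])=\nu_i(z)$.

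\emph{Sufficiency.} Assume $B_n^{+}<\infty$ and repeat the construction from the proof of Theorem~\ref{2.1}: for $0\le f\in L_p(I)$ with compact support, form $F=\mathbf{K}^{+}f$, the level function $k(x)=\max\{k\in\mathbb{Z}:(h_n+1)^k\le F(x)\}$, and the grid $\{x_i\}$ with $\bar I_i=[x_i,x_{i+1}]$. Applying \eqref{eq3.1} at $x=x_i$, $t=x_{i-2}$ for $s\in(0,x_{i-2})$ (and keeping $K_n(x_i,s)$ on $(x_{i-2},x_i)$) gives, in place of \eqref{eq2.8},
$$(h_n+1)^{k_i-1}\ll\int_{x_{i-2}}^{x_i}K_n(x_i,s)v(s)f(s)\,ds+\sum_{j=0}^{n-1}K_{n,j}(x_i,x_{i-2})\,\mathbf{K}_j^{+}f(x_{i-2}),$$
whence, raising to the power $q$ and summing against $\mu(\bar I_i)$,
$$J:=\int_{[0,\infty]}F^q\,d\mu\ll\sum_i\mu(\bar I_i)\Big(\int_{x_{i-2}}^{x_i}K_n(x_i,s)v(s)f(s)\,ds\Big)^q+\sum_{j=0}^{n-1}\sum_i\mu(\bar I_i)K_{n,j}^q(x_i,x_{i-2})\big(\mathbf{K}_j^{+}f(x_{i-2})\big)^q.$$
The first sum is treated verbatim as $J_{1,1}$ in Theorem~\ref{2.1} (H\"older in the integral and the sum, Lemma~2 of \cite{P2}, Fubini, and the representation of $\int_0^zK_n^{p'}v^{p'}$ by a Borel measure), giving $\ll(B_{n,n}^{+})^q\|f\|_p^q$. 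For $j=0$ one has $K_0\equiv1$, so the corresponding term is an ordinary weighted Hardy sum, bounded via Theorem~2 of \cite[\S1.3]{Maz2} by $\ll(B_{n,0}^{+})^q\|f\|_p^q$. For $1\le j\le n-1$ the $j$-th sum equals $\int_{[0,\infty]}(\mathbf{K}_j^{+}f)^q\,d\sigma_j$ with $\sigma_j=\sum_i\mu(\bar I_i)K_{n,j}^q(x_i,x_{i-2})\,\delta_{x_{i-2}}$; since $K_j\in\mathcal{O}_j^{+}$, the inductive hypothesis bounds it by $\ll(\max_{0\le\ell\le j}B_{j,\ell}^{+}(\sigma_j))^q\|f\|_p^q$, where $B_{j,\ell}^{+}(\sigma_j)$ is the level-$j$ constant formed with $\sigma_j$ in place of $\mu$ and the sub-kernels $K_{j,\ell}$. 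By Remark~\ref{r2.0}, $K_{n,j}(x_i,x_{i-2})K_{j,\ell}(x_{i-2},z)\le K_{n,\ell}(x_i,z)$ with $K_{n,\ell}$ non-decreasing in its first variable, so, using the bounded overlap of the $\bar I_i$, $\int_{[z,\infty]}K_{j,\ell}^q(x,z)\,d\sigma_j(x)\ll\nu_\ell(z)$, while the $L_p$-side factors $\int_0^zK_\ell^{p'}v^{p'}$ are identical in $B_{j,\ell}^{+}(\sigma_j)$ and in $B_{n,\ell}^{+}$; hence $B_{j,\ell}^{+}(\sigma_j)\ll B_{n,\ell}^{+}\le B_n^{+}$. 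Collecting the estimates gives $J\ll(B_n^{+})^q\|f\|_p^q$, i.e.\ $\|\mathbf{K}^{+}\|_{p\to q}\ll B_n^{+}$.

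\emph{Necessity.} Here $B_{n,n}^{+}\ll\|\mathbf{K}^{+}\|_{p\to q}$ is obtained exactly as $B_{1,1}^{+}\ll\|\mathbf{K}^{+}\|$ in Theorem~\ref{2.1} (pass to the dual operator, use that $K_n$ is non-decreasing in its first variable, and apply Lemma~2 and Theorem~3 of \cite{P2}), and $B_{n,0}^{+}\ll\|\mathbf{K}^{+}\|_{p\to q}$ as $B_{1,0}^{+}\ll\|\mathbf{K}^{+}\|$ there (linearize $J$, use the lower bound in \eqref{eq3.1} with middle point $s$ and $K_0\equiv1$, then Theorem~2 of \cite[\S1.3]{Maz2}). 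For $1\le i\le n-1$ use the lower bound $K_n(x,s)\gg K_{n,i}(x,z)K_i(z,s)$ for $x\ge z\ge s>0$ from \eqref{eq3.1}, which yields $\mathbf{K}^{+}f(x)\gg K_{n,i}(x,z)\,\mathbf{K}_i^{+}f(z)$ for $x\ge z$ and hence $\int_{[z,\infty]}(\mathbf{K}^{+}f)^q\,d\mu\gg(\mathbf{K}_i^{+}f(z))^q\nu_i(z)$; feeding this into a test function built from near-extremals of the relevant Hardy-type inequality on the level sets of $\nu_i$ (a discretization is unavoidable here since $q<p$ --- a single pointwise estimate would cost a logarithmic factor) shows that $\mathbf{K}_i^{+}$ is bounded from $L_p(I)$ to $L_{q,\theta_i}(I)$ with norm $\ll\|\mathbf{K}^{+}\|_{p\to q}$. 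As $K_i\in\mathcal{O}_i^{+}$, the inductive hypothesis identifies that norm with $B_{n,i}^{+}$ up to constants (the criterion for $\mathbf{K}_i^{+}$ relative to $\theta_i$ is the maximum over $0\le\ell\le i$ of the level-$i$ constants with $\theta_i$ in place of $\mu$; the $\ell=i$ term is $B_{n,i}^{+}$, and the rest are dominated by $\max_\ell B_{n,\ell}^{+}$ via Remark~\ref{r2.0} as above). Thus $B_{n,i}^{+}\ll\|\mathbf{K}^{+}\|_{p\to q}$, and together with the sufficiency part $\|\mathbf{K}^{+}\|_{p\to q}\approx B_n^{+}$.

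The reformulation \eqref{eq3.2} follows by the manipulation already carried out for $\hat B_{1,1}$ in the proof of Theorem~\ref{2.1}: represent $\int_0^zK_i^{p'}v^{p'}$ as $\int_{(0,z]}d\eta_i$ (Remark~\ref{r2.1}) and $\nu_i(z)$ as $\int_{[z,\infty]}d\theta_i$, apply Fubini's theorem, and use the elementary identity $\int\varphi^{a}\,d\varphi\approx\varphi^{a+1}$ for the two monotone functions involved. The main obstacle is precisely the intermediate step of the necessity ($1\le i\le n-1$): the kernel decomposition only supplies a pointwise-in-$z$ lower bound for $\mathbf{K}^{+}f$, and turning it into the full integral condition $B_{n,i}^{+}$ without loss forces a careful choice of test function adapted to the geometry of $\nu_i$, after which the $\mathcal{O}_i^{+}$-case can be invoked; a secondary delicate point is the constant comparison $B_{j,\ell}^{+}(\sigma_j)\ll B_{n,\ell}^{+}$ in the sufficiency, which rests on the sub-multiplicativity of the sub-kernels in Remark~\ref{r2.0}.
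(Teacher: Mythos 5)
Your sufficiency argument and your derivation of $B^+_{n,n},B^+_{n,0}\ll\|\mathbf{K}^+\|_{p\to q}$ coincide with the paper's: the same level-set discretization of $F_n$, the same splitting into $n+1$ sums, the same treatment of the diagonal and zeroth terms via \cite{P2} and \cite[\S 1.3]{Maz2}, and the same reduction of the intermediate sums to lower classes with the discrete measures $\sum_i\mu(\bar I_i)K^q_{n,j}(x_i,x_{i-2})\delta_{x_{i-2}}$ followed by the constant comparison through Remark \ref{r2.0} (the paper organizes this as an induction on the index $j$ inside the proof rather than as an induction on $n$, but the content is the same). The reformulation \eqref{eq3.2} is also obtained exactly as you describe.

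The gap is in the necessity of $B^+_{n,i}$ for $1\le i\le n-1$, precisely the point you yourself flag as the main obstacle. From $K_n(x,s)\gg K_{n,i}(x,z)K_i(z,s)$ you extract only the pointwise bound $\int_{[z,\infty]}(\mathbf{K}^+f)^q\,d\mu\gg\bigl(\mathbf{K}^+_if(z)\bigr)^q\nu_i(z)$, where $\nu_i(z)=\int_{[z,\infty]}K^q_{n,i}(x,z)\,d\mu(x)$; taking suprema over $z$ and over $\|f\|_p\le1$ this yields a Muckenhoupt-type condition $\sup_z\nu_i(z)^{1/q}\bigl(\int_0^zK_i^{p'}v^{p'}\bigr)^{1/p'}\ll\|\mathbf{K}^+\|_{p\to q}$, which for $q<p$ does not imply the integral condition $B^+_{n,i}$. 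The sentence asserting that a ``test function built from near-extremals on the level sets of $\nu_i$'' shows that $\mathbf{K}^+_i$ is bounded from $L_p(I)$ to $L_{q,\theta_i}(I)$ with norm $\ll\|\mathbf{K}^+\|_{p\to q}$ is a claim, not a construction: no test functions are exhibited and no argument is given for recovering the full integral over $z$ without loss. The paper proceeds by a different mechanism: it multiplies the kernel inequality \eqref{eq3.19} by an arbitrary density $\varphi\ge0$ with $\int_0^\infty\varphi\,dt=1$ and integrates in $t$, so that \eqref{eq3.15} yields the boundedness of $\mathbf{K}^+_i$ from $L_p(I)$ into the $L_q$ space of the measure $d\nu_\varphi(z)=\int_{[z,\infty]}K_{n,i}(x,z)\bigl(\int_z^xK_{n,i}(x,\tau)\varphi(\tau)\,d\tau\bigr)^{q-1}d\mu(x)\,\varphi(z)\,dz$ with norm $\ll\|\mathbf{K}^+\|_{p\to q}$ uniformly in $\varphi$; it then applies the already proved diagonal estimate \eqref{eq3.18} (valid for every measure in $\Psi$, hence for $\nu_\varphi$) to obtain \eqref{eq3.20-1}, and only afterwards specializes $\varphi$ to Dirac deltas and uses the monotonicity of $K_{n,i}$ in the second argument to read off $B^+_{n,i}\ll\|\mathbf{K}^+\|_{p\to q}$. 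In other words, the integration in $z$ is supplied by invoking the full dual-side necessity result for the auxiliary measures $\nu_\varphi$, not by a choice of a single test function; this ingredient (or an explicit substitute for it) is missing from your argument, so the intermediate necessity is not established as written.
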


\begin{remark}\label{r3.0}
Relations of form \eqref{eq3.2} are presented in the statements of all Theorems because they are suitable for the applications listed in Introduction. We only prove relation \eqref{eq3.2}, since the other relations are proved similarly.

\end{remark}

\begin{proof} {\it Necessity.} Let operator \eqref{eq2.1} be bounded from $L_{p}(I)$ to $L_{q,\mu}(I)$, i.e., the inequality
\begin{equation}\label{eq3.15}
\left(\int\limits_{[0,\infty]}\left(\int\limits_0^x K_n(x, s) v(s)f(s)ds\right)^{q}d\mu(x)\right)^{\frac{1}{q}} \leq \|\mathbf{K^+}\|_{p\rightarrow q}\left(\int\limits_0^\infty |f(t)|^{p}dt\right)^{\frac{1}{p}}
\end{equation}
holds for all $0\leq f\in L_p(I)$ having compact support.

Using the relations $K_n(x, t)\gg  K_{n, 0}(x, s)$ for $x\geq s\geq t>0$ and, in particular, the relation $K_n(x, s)\gg   K_{n, 0}(x, s)$ for $x\geq s>0$, which follows from \eqref{eq3.1}, by Lemma 1 from \cite{P2} and  Fubini's theorem, we have
$$
J_n\equiv\int\limits_{[0,\infty]} \left(\int\limits_0^x K_n(x, s)v(s)f(s)ds\right)^qd\mu(x)
$$
$$
\approx\int\limits_{[0,\infty]} \int\limits_0^x K_n(x, s)v(s)f(s)\left(\int\limits_0^s K_n(x, t)v(t)f(t)dt\right)^{q-1}dsd\mu(x)
$$
$$
\gg\int\limits_{[0,\infty]} \int\limits_0^x K_{n, 0}^q(x, s)v(s)f(s)\left(\int\limits_0^sv(t)f(t)dt\right)^{q-1}dsd\mu(x)
$$
\begin{equation}\label{F3.1}
=\int\limits_0^\infty v(s)f(s)\left(\int\limits_0^sv(t)f(t)dt\right)^{q-1}\int\limits_{[s,\infty]} K_{n, 0}^q(x, s)d\mu(x)ds.
\end{equation}
Remark \ref{r2.0} gives that the function $\int\limits_{[s,\infty)} K_{n, 0}^q(x, s)d\mu(x)$ does not increase on $I$.  Hence, by Remark \ref{r2.2}, there exists a Borel measure $\theta_{n,0}$ such that  $\int\limits_{[s,\infty]} K_{n, 0}^q(x, s)d\mu(x)=\int\limits_{[s,\infty]}d\theta_{n,0}(z)$. Replacing this relation into \eqref{F3.1} and using Fubini's theorem, we get
$$
J_n\gg\int\limits_0^\infty v(s)f(s)\left(\int\limits_0^s v(t)f(t)dt\right)^{q-1}\int\limits_{[s, \infty]} d\theta_{n,0}(z)ds
\approx\int\limits_{[0,\infty]}\left(\int\limits_0^z v(t)f(t)dt\right)^q d\theta_{n,0}(z).
$$
Then from \eqref{eq3.15} we obtain
$$
\left(\int\limits_{[0,\infty]}\left(\int\limits_0^z v(t)f(t)dt\right)^{q}d\theta_{n,0}(z)\right)^{\frac{1}{q}} \ll \|\mathbf{K^+}\|_{p\rightarrow q}\left(\int\limits_0^\infty |f(t)|^{p}dt\right)^{\frac{1}{p}}
$$
and, on the basis of Theorem 2 from \cite[\S 1.3, 1.3.3]{Maz2}, we find that
\begin{equation}\label{eq3.16}
B^+_{n, 0}\ll  \|\mathbf{K^+}\|_{p\rightarrow q}.
\end{equation}
From the boundedness of operator \eqref{eq2.1} from $L_{p}(I)$ to $L_{q,\mu}(I)$ it follows the boundedness of dual operator \eqref{eq2.2} from $L_{q',\mu}(I)$ to $L_{p'}(I)$, i.e., we have
\begin{equation}\label{eq3.17}
\left(\int\limits_0^\infty v^{p'}(s)\left(\int\limits_{[s,\infty]} K_n(x, s)g(x)d\mu(x)\right)^{p'}ds\right)^{\frac{1}{p'}}\leq \|\mathbf{K^+}\|_{p\rightarrow q}\left(\int\limits_{[0,\infty]} |g(t)|^{q'}d\mu(t)\right)^{\frac{1}{q'}}
\end{equation}
for $0\leq g\in L_{q'}(I)$ having compact support. Since $K_n(\cdot, \cdot)\in\mathcal{O}_n^+$, then, by the definition, the function $K_n(\cdot, \cdot)$ does not decrease in the first argument. Using the last fact, Lemma 2 from \cite{P2} and Fubini's theorem, we deduce that
$$
J_n^*=\int\limits_0^\infty v^{p'}(s)\left(\int\limits_{[s,\infty]} K_n(x, s)g(x)d\mu(x)\right)^{p'}ds
$$
$$
\approx\int\limits_0^\infty v^{p'}(s)\int\limits_{[s,\infty]} K_n(x, s)g(x) \left(\int\limits_{[x,\infty]} K_n(t, s)g(t)d\mu(t)\right)^{p'-1}d\mu(x)ds
$$
$$
\geq\int\limits_0^\infty v^{p'}(s)\int\limits_{[s,\infty]} K^{p'}_n(x, s)g(x)\left(\int\limits_{[x,\infty]} g(t)d\mu(t)\right)^{p'-1}d\mu(x)ds
$$
\begin{equation}\label{F3.2}
=\int\limits_{[0,\infty]} g(x) \left(\int\limits_{[x,\infty]} g(t)d\mu(t)\right)^{p'-1}\int\limits_0^x K_n^{p'}(x, s)v^{p'}(s)ds\,d\mu(x).
\end{equation}
The function $\int\limits_0^x K_n^{p'}(x, s)v^{p'}(s)ds$ does not decrease on $I$ and $\lim\limits_{x\to 0}\int\limits_0^x K_n^{p'}(x, s)v^{p'}(s)ds=0$ due to \eqref{F3.2}. Therefore,  by Remark \ref{r2.1}, there exists a Borel measure $\eta_n$ such that $\int\limits_0^x K_n^{p'}(x, s)v^{p'}(s)ds=\int\limits_{(0,x]}d\eta_n(z)$. Using this relation in \eqref{F3.2}, Fubini's theorem and Lemma 2 from \cite{P2}, we have
$$
J_n^*\gg\int\limits_{(0,\infty]}\left(\int\limits_{[z,\infty]} g(t)d\mu(t)\right)^{p'} d\eta_n(z).
$$
The latter and \eqref{eq3.17} give that
$$
\left(\int\limits_{(0,\infty]}\left(\int\limits_{[z,\infty])} g(t)d\mu(t)\right)^{p'} d\eta_n(z)\right)^{\frac{1}{p'}}\ll \|\mathbf{K^+}\|_{p\rightarrow q}\left(\int\limits_{[0,\infty]} |g(t)|^{q'}d\mu(t)\right)^{\frac{1}{q'}}.
$$
Then, by Theorem 3 from \cite{P2}, we get
\begin{equation}\label{eq3.18}
B^+_{n, n}\ll  \|\mathbf{K^+}\|_{p\rightarrow q}.
\end{equation}
From \eqref{eq3.1} it follows that
\begin{equation}\label{eq3.19}
K_{n}(x, s)\gg  K_{n, i}(x, t)K_i(t, s)=\chi_{(s, x)}(t) K_{n, i}(x, t)K_i(t, s),~i=1,2,...,n-1,
\end{equation}
for  $x\geq t\geq s>0$.
Let $ \varphi: I\to [0,\infty)$ and $\int\limits_0^\infty \varphi(t)dt=1$. Multiplying both sides of \eqref{eq3.19} by $\varphi(t)$ and integrating with respect to $t\in I$, we obtain
$$
K_n(x, s)\gg  \int\limits_s^x K_{n, i}(x, t)K_i(t, s)\varphi(t)dt,~i=1,2,...,n-1.
$$
Then, using Fubini's theorem and the fact that the function $K_i(\tau, t)$ does not decrease in the first argument, for an arbitrary $i\in \{1,2,...,n-1\}$ we get
$$
\int\limits_{[0,\infty]} \left(\int\limits_0^x K_n(x, s)v(s)f(s)ds\right)^{q}d\mu(x)
$$
$$
=\int\limits_{[0,\infty]} \int\limits_0^x K_n(x, s)v(s)f(s)ds\left(\int\limits_0^x K_n(x, t)v(t)f(t)dt\right)^{q-1}d\mu(x)
$$
$$
\geq \int\limits_{[0,\infty]} \int\limits_0^x \left(\int\limits_s^x K_{n, i}(x, z)K_i(z, s)\varphi(z)dz\right)v(s)f(s)ds
$$
$$
\times\left(\int\limits_0^x \left(\int\limits_t^x K_{n, i}(x, \tau)K_i(\tau, t)\varphi(\tau)d\tau\right)v(t)f(t)dt\right)^{q-1}d\mu(x)
$$
$$
=\int\limits_{[0,\infty]} \int\limits_0^x K_{n, i}(x, z)\int\limits_0^z K_i(z, s)v(s)f(s)ds\varphi(z)dz
$$
$$
\times\left(\int\limits_0^x K_{n, i}(x, \tau)\int\limits_0^\tau K_i(\tau, t)v(t)f(t)dt\varphi(\tau)d\tau\right)^{q-1}d\mu(x)
$$
$$
\geq\int\limits_0^\infty \int\limits_0^z K_i(z, s)v(s)f(s)ds\int\limits_{[z,\infty]} K_{n, i}(x, z)
$$
$$
\times\left(\int\limits_z^x K_{n,i}(x,\tau)\int\limits_0^\tau K_i(\tau, t)v(t)f(t)dt\varphi(\tau)d\tau\right)^{q-1}d\mu(x)\varphi(z)dz
$$
$$
\geq \int\limits_0^\infty \left(\int\limits_0^z K_i(z, s)v(s)f(s)ds\right)^q \int\limits_{[z,\infty]} K_{n, i}(x, z)\left(\int\limits_z^x K_{n, i}(x, \tau)\varphi(\tau)d\tau\right)^{q-1}d\mu(x)\varphi(z)dz.
$$
Based on \eqref{eq3.15}, the latter gives that
$$
\left(\int\limits_o^\infty \left(\int\limits_0^z K_i(z, s)v(s)f(s)ds\right)^q \int\limits_{[z,\infty]} K_{n, i}(x, z)\left(\int\limits_z^x K_{n, i}(x, \tau)\varphi(\tau)d\tau\right)^{q-1}d\mu(x)\varphi(z)dz\right)^{\frac{1}{q}}
$$
\begin{equation}\label{eq3.20}
\ll\|\mathbf{K^+}\|_{p\rightarrow q}\left(\int\limits_0^\infty f^p(t)dt\right)^{\frac{1}{p}},~i=1,2,...,n-1.
\end{equation}

This inequality is the same as inequality \eqref{eq3.15} for $n=i$. Therefore, due to \eqref{eq3.18}, which holds for all $n>1$, we have
$$
\sup\limits_{\varphi}\left(\int\limits_{(0,\infty]}\left(\int\limits_z^\infty \int\limits_{[t,\infty]} K_{n, i}(x, t)\left(\int\limits_t^x K_{n, i}(x, \tau)\varphi(\tau)d\tau\right)^{q-1}d\mu(x)\varphi(t)dt\right)^{\frac{p}{p-q}}\right.
$$
\begin{equation}\label{eq3.20-1}
\left.\times\left(\int\limits_0^z K_i^{p'}(z, s)v^{p'}(s)ds\right)^{\frac{p(q-1)}{p-q}} d\left(\int\limits_0^z K_{i}^{p'}(z, s)v^{p'}(s)ds\right)\right)^{\frac{p-q}{pq}}\ll \|\mathbf{K^+}\|_{p\rightarrow q}.
\end{equation}
Since
$$
\int\limits_z^\infty \int\limits_{[t,\infty]} K_{n, i}(x, t)\left(\int\limits_t^x K_{n, i}(x, \tau)\varphi(\tau)d\tau\right)^{q-1}d\mu(x)\varphi(t)dt
$$
$$
=\int\limits_{[z,\infty]}\int\limits_z^xK_{n, i}(x, t)\varphi(t)\left(\int\limits_t^x K_{n, i}(x, \tau)\varphi(\tau)d\tau\right)^{q-1}dtd\mu(x)\approx\int\limits_{[z,\infty]}\left(\int\limits_z^xK_{n,i}(x,t)\varphi(t)dt\right)^qd\mu(x),
$$
then from \eqref{eq3.20-1} we have
$$
\sup\limits_{\varphi}\left(\int\limits_{(0,\infty]}\left(\int\limits_{[z,\infty]}\left(\int\limits_z^xK_{n,i}(x,t)\varphi(t)dt\right)^qd\mu(x)\right)^{\frac{p}{p-q}}\right.
$$
$$
\left.\times\left(\int\limits_0^z K_i^{p'}(z, s)v^{p'}(s)ds\right)^{\frac{p(q-1)}{p-q}} d\left(\int\limits_0^z K_{i}^{p'}(z, s)v^{p'}(s)ds\right)\right)^{\frac{p-q}{pq}}\ll \|\mathbf{K^+}\|_{p\rightarrow q}.
$$
Let $\tau\in I$. Assume that $\varphi(t)=\delta(t-\tau)$. Then
$$
\sup\limits_{\tau\in I}\left(\int\limits_{(0,\infty]}\left(\int\limits_{[z,\infty]} \left(\int\limits_0^\infty \chi_{(z, x)}(t) K_{n, i}(x, t)\delta(t-\tau)dt\right)^qd\mu(x)\right)^{\frac{p}{p-q}}\left(\int\limits_0^x K_{i}^{p'}(x, s)v^{p'}(s)ds\right)^{\frac{p(q-1)}{p-q}}\right.
$$
$$
\left. \times d\left(\int\limits_0^z K_{i}^{p'}(z, s)v^{p'}(s)ds\right)\right)^{\frac{p-q}{pq}}\ll \|\mathbf{K^+}\|_{p\rightarrow q},
$$
which, basing on the non-increase of the function $K_{n, i}(x, t)$ in the second argument, implies that
$$
B^+_{n, i}=\left(\int\limits_{(0,\infty]}\left(\int\limits_{[z,\infty]}  K_{n, i}^q(x, z)d\mu(x)\right)^{\frac{p}{p-q}}\left(\int\limits_0^z K_i^{p'}(z, s)v^{p'}(s)ds\right)^{\frac{p(q-1)}{p-q}}\right.
$$
$$
\left.\times d\left(\int\limits_0^z K_{i}^{p'}(z, s)v^{p'}(s)ds\right)\right)^{\frac{p-q}{pq}}\ll \|\mathbf{K^+}\|_{p\rightarrow q},
$$
i.e.,
$$
B^+_{n, i}\ll  \|\mathbf{K^+}\|_{p\rightarrow q},~ i=1,2,...,n-1,
$$ holds.
The latter, together with  \eqref{eq3.16} and \eqref{eq3.18}, gives that
\begin{equation}\label{eq3.22}
B^+_n=\max\limits_{0\leq i\leq n}B^+_{n,i}\ll \|\mathbf{K^+}\|_{p\rightarrow q}.
\end{equation}

{\it Sufficiency.} Let $B^+_n<\infty$ and $K(\cdot,\cdot)\equiv K_n(\cdot, \cdot)\in \mathcal{O}_n^{+}$. Let $0\leq f\in L_p(I)$ have compact support. Next, we proceed in the same way as in the proof of the sufficient part of Theorem~\ref{2.1}. Here we take the function $F_n(x)=\int\limits_0^x K_n(x, s)v(s)f(s)ds$ instead of the function $F(\cdot)$ and, as in Theorem \ref{2.1}, integers $\{k_i\}\subset \mathbb{Z},$ $(k_i<k_{i+1})$ and points $\{x_i\}\subset I$ are such that
\begin{equation}\label{eq3.3}
I=\bigcup\limits_i I_i,  ~~~ I_i=[x_i, x_{i+1}),\,\,\,\bar{I}_i=[x_i,x_{i+1}],
\end{equation}
\begin{equation}\label{eq3.4}
(h_n+1)^{k_i}\leq F_n(x)< (h_n+1)^{k_i+1}~~\text{for}~~x_i\leq x<x_{i+1},
\end{equation}
where $h_n$ is  from relation (\ref{eq3.1}).
Based on  (\ref{eq3.1}), relation \eqref{eq2.8} has the form
$$
(h_n+1)^{k_i-1}\leq (h_n+1)^{k_i-1}-h_n(h_n+1)^{k_{i-2}+1}\leq \int\limits_{x_{i-2}}^{x_i} K_n(x_i, s) v(s) f(s)ds
$$
$$
+\int\limits_0^{x_{i-2}}[K_n(x_i, s)-h_n K_n(x_{i-2}, s)]v(s)f(s)ds
$$
$$
\ll \int\limits_{x_{i-2}}^{x_i} K_n(x_i,s)v(s) f(s)ds+ K_{n, 0}(x_i, x_{i-2})\int\limits_0^{x_{i-2}}v(s)f(s)ds
$$
\begin{equation}\label{eq3.5}
+\sum\limits_{\gamma=1}^{n-1} K_{n, \gamma}(x_i, x_{i-2})\int\limits_0^{x_{i-2}} K_\gamma(x_{i-2}, s)v(s)f(s)ds.
\end{equation}
Due to  \eqref{eq3.3}, \eqref{eq3.4} and \eqref{eq3.5} we have
$$
J_n\equiv\int\limits_{[0,\infty]}(\mathbf{K_n}^+f(x))^qd\mu(x)\ll\sum\limits_i\int\limits_{\bar{I}_i} F_n^q(x)d\mu(x)\leq\sum\limits_i(h_n+1)^{q(k_i+1)}\mu(\bar{I}_i)
$$
$$
\ll \sum\limits_i(h_n+1)^{q(k_i-1)}\mu(\bar{I}_i)\ll \sum\limits_i\mu(\bar{I}_i)\left(\int\limits_{x_{i-2}}^{x_i} K_n(x_i, s)v(s)f(s)ds\right)^q
$$
$$
+\sum\limits_i\mu(\bar{I}_i) K^q_{n, 0}(x_i, x_{i-2})\left(\int\limits_{0}^{x_{i-2}} v(s)f(s)ds\right)^q
$$
\begin{equation}\label{eq3.6}
+\sum\limits_{\gamma=1}^{n-1}\sum\limits_i\mu(\bar{I}_i) K_{n, \gamma}^q(x_i, x_{i-2})\left(\int\limits_0^{x_{i-2}}K_\gamma(x_{i-2}, s)v(s)f(s)ds\right)^q=J_{n, n}+J_{n, 0}+\sum\limits_{\gamma=1}^{n-1}J_{n, \gamma}.
\end{equation}
We estimate $J_{n, n}$, $J_{n, 0}$ and $J_{n, \gamma}$, $\gamma=1,2,...,n-1$, separately. To estimate $J_{n, n}$ we use H\"{o}lder's inequality in the integral, then in the sum with the parameters $\frac{p}{q}$ and $\frac{p}{p-q}$:
$$
J_{n, n}\leq \sum\limits_i\mu(\bar{I}_i) \left(\int\limits_{x_{i-2}}^{x_{i}}K_n^{p'}(x_{i}, s)v^{p'}(s)ds\right)^{\frac{q}{p'}}\left(\int\limits_{x_{i-2}}^{x_i}f^p(t)dt\right)^{\frac{q}{p}}
$$
$$
\leq\left(\sum\limits_i \left(\mu(\bar{I}_i)\right)^{\frac{p}{p-q}}\left(\int\limits_{x_{i-2}}^{x_{i}}K_n^{p'}(x_i, s)v^{p'}(s)ds\right)^{\frac{q(p-1)}{p-q}}\right)^{\frac{p-q}{p}}
$$
\begin{equation}\label{eq3.7}
\times\left(\sum\limits_i\int\limits_{x_{i-2}}^{x_{i}} f^p(t)dt)\right)^{\frac{q}{p}}\ll \hat{B}_{n, n}\|f\|_p^q,
\end{equation}
where
$$
\hat{B}_{n, n}=\left(\sum\limits_i \left(\mu(\bar{I}_i)\right)^{\frac{p}{p-q}}\left(\int\limits_{x_{i-2}}^{x_{i}}K_n^{p'}(x_i, s)v^{p'}(s)ds\right)^{\frac{q(p-1)}{p-q}}\right)^{\frac{p-q}{p}}.
$$
Then, applying Lemmas 1 and 2 from \cite{P2}, relation $\int\limits_{0}^{x}K_n^{p'}(x, s)v^{p'}(s)ds=\int\limits_{(0,x]}d\eta_n(z)$, which follows from the condition $B^+_n<\infty$, Remark \ref{r2.1} and Fubini's theorem, we get
$$
\hat{B}_{n, n}\ll\left(\sum\limits_i \int\limits_{[x_{i}, x_{i+1}]}\left(\mu([x,\infty]) \right)^{\frac{q}{p-q}}\left(\int\limits_{0}^{x}K_n^{p'}(x, s)v^{p'}(s)ds\right)^{\frac{q(p-1)}{p-q}}d\mu(x) \right)^{\frac{p-q}{p}}
$$
$$
\ll\left(\int\limits_{[0,\infty]}\left(\mu([x,\infty]) \right)^{\frac{q}{p-q}}\left(\int\limits_{(0,x]}d\eta_n(z)\right)^{\frac{q(p-1)}{p-q}}d\mu(x)\right)^{\frac{p-q}{p}}
$$
$$
\ll\left(\int\limits_{[0,\infty]}\left(\mu([x,\infty]) \right)^{\frac{q}{p-q}}\int\limits_{(0,x]}\left(\int\limits_{(0,s]}d\eta_n(z)\right)^{\frac{p(q-1)}{p-q}}d\eta_n(s)d\mu(x)\right)^{\frac{p-q}{p}}
$$
$$
\ll\left( \int\limits_{(0,\infty]}\left(\mu([s,\infty]) \right)^{\frac{p}{p-q}}\left(\int\limits_{(0,s]}d\eta_n(z)\right)^{\frac{p(q-1)}{p-q}}d\eta_n(s)\right)^{\frac{p-q}{p}}
$$
$$
=\left( \int\limits_{(0,\infty]}\left(\mu([x,\infty]) \right)^{\frac{p}{p-q}}\left(\int\limits_{0}^{x}K_n^{p'}(x, s)v^{p'}(s)ds\right)^{\frac{p(q-1)}{p-q}}d\left(\int\limits_{0}^{x}K_n^{p'}(x, s)v^{p'}(s)ds\right)\right)^{\frac{p-q}{p}}=(B^+_{n, n})^q.
$$
The latter and  \eqref{eq3.7} give that
\begin{equation}\label{eq3.8}
J_{n, n}\ll (B^+_{n, n})^q\|f\|_p^q.
\end{equation}
Now, we estimate $J_{n, 0}$:
$$
J_{n, 0}=\sum\limits_i\mu(\bar{I}_i)K^q_{n,0}(x_i, x_{i-2})\left(\int\limits_{0}^{x_{i-2}}v(s)f(s)ds\right)^q \leq\sum\limits_i\int\limits_{[x_i,x_{i+1}]}K^q_{n,0}(x, x_{i-2})d\mu(x)
$$
\begin{equation}\label{eq3.9}
\times\left(\int\limits_{0}^{x_{i-2}}v(s)f(s)ds\right)^q=\int\limits_{[0,\infty]}\left(\int\limits_{0}^{t} v(s)f(s)ds)\right)^qd\eta_{n, 0}(t),
\end{equation}
where $d\eta_{n, 0}(t)=\sum\limits_i\int\limits_{[x_i, x_{i+1}]}K_{n, 0}^q(x, x_{i-2}) d\mu(x)\delta(t-x_{i-2})dt$. Then by Theorem 3 from \cite[\S 1.3, 1.3.3]{Maz2} we obtain
\begin{equation}\label{eq3.10}
J_{n, 0}\ll \hat{B}_{n, 0}\|f\|_p^q,
\end{equation}
where
$$
\hat{B}_{n, 0}=\left(\int\limits_0^\infty (\eta_{n, 0}([x, \infty]))^{\frac{p}{p-q}}\left(\int\limits_0^x v^{p'}(s)ds\right)^{\frac{p(q-1)}{p-q}}v^{p'}(x)dx\right)^{\frac{p-q}{p}}.
$$
Since
$$\eta_{n, 0}([x, \infty])=\sum\limits_{i: x_{i-2}\geq x}\int\limits_{[x_i, x_{i+1}]} K_{n, 0}^q(t, x_{i-2})d\mu(t)\ll \int\limits_{[x,\infty]} K_{n, 0}^q(t, x)\mu(t),$$
then $\hat{B}_{n, 0}\ll (B^+_{n, 0})^q$. Therefore, from \eqref{eq3.10} we find that
\begin{equation}\label{eq3.11}
J_{n, 0}\ll (B^+_{n, 0})^q\|f\|_p^q.
\end{equation}
Note that relations \eqref{eq3.8} and \eqref{eq3.11} hold for all $n\geq 1$.

Next, we estimate $J_{n, j}$, $j=1,2,...,n-1$:
$$
J_{n,j}\leq\sum\limits_i\int\limits_{[x_i,x_{i+1}]}K^q_{n,j}(x,x_{i-2})d\mu(x)\left(\int\limits_{0}^{x_{i-2}}K_j(x_{i-2},s)v(s)f(s)ds\right)^q
$$
\begin{equation}\label{eq3.11.1}
=\int\limits_{[0,\infty]}\left(\int\limits_0^t K_j(t, s)v(s)f(s)ds\right)^qd\eta_{n, j}(t),
\end{equation}
where $d\eta_{n, j}(t)=\sum\limits_i\int\limits_{[x_i,x_{i+1}]}K_{n, j}^q(x, x_{i-2})d\mu(x) \delta(t-x_{i-2})dt.$
Further, by the method of mathematical induction, we show that
\begin{equation}\label{eq3.11.2}
J_{n,j}\ll \max\limits_{0\leq i\leq j}(B^+_{n,i})^q\|f\|_p^q
\end{equation}
for all $j=1,2,...,n-1$.

Hence, from \eqref{eq3.8}, \eqref{eq3.11} and \eqref{eq3.11.2} we get
$$
J_n\ll (B^+_n)^q \|f\|_p^q,
$$
i.e., operator \eqref{eq2.1}  is bounded from $L_{p}(I)$ to $L_{q,\mu}(I)$ and for its norm $\|\mathbf{K^+}\|_{p\rightarrow q}$ from $L_{p}(I)$ to $L_{q,\mu}(I)$ we have the estimate
\begin{equation}\label{eq3.14}
\|\mathbf{K^+}\|_{p\rightarrow q}\ll B^+_{n}.
\end{equation}

Let $j=1$.  Since the measure $\eta_{n, j}$ is generated by a non-increasing function $\psi(z)=\sum\limits_{i:x_{i-2}\geq z}\int\limits_{[x_i,x_{i+1}]}K_{n, j}^q(x, x_{i-2})d\mu(x)$, i.e., $\int\limits_{[z,\infty]}d\eta_{n, j}(t)=\psi(z)$, $z\in I$, then it is obvious that $\eta_{n, j}\in \Psi$ for all $j=1,2,...,n-1$.
Therefore, applying Theorem \ref{2.1} to $J_{n,1}$, we deduce
\begin{equation}\label{eq3.12}
J_{n, 1}\ll \max\{\hat{B}_{n, 1, 0}, \hat{B}_{n, 1, 1}\}\|f\|_p^q,
\end{equation}
where
$$
\hat{B}_{n, 1, 0}=\left(\int\limits_0^\infty\left(\int\limits_{[z, \infty]} K_{1, 0}^q(t, z)d\eta_{n, 1}(t)\right)^{\frac{p}{p-q}}\left(\int\limits_0^zv^{p'}(s)ds\right)^{\frac{p(q-1)}{p-q}}v^{p'}(z)dz\right)^{\frac{p-q}{p}},
$$
$$
\hat{B}_{n, 1, 1}=\left(\int\limits_{(0, \infty]}\left(\eta_{n, 1}([z, \infty])\right)^{\frac{p}{p-q}} \left(\int\limits_0^z K_{1}^{p'}(z, s)v^{p'}(s)ds\right)^{\frac{p(q-1)}{p-q}} d\left(\int\limits_0^z K_1^{p'}(z, t)v^{p'}(t)dt\right)\right)^{\frac{p-q}{p}}.
$$
Using Remark \ref{r2.0}, we obtain
$$
\int\limits_{[z, \infty]}K_{1, 0}^q(t, z)d\eta_{n, 1}(t)=\sum\limits_{i: x_{i-2}\geq z}K_{1, 0}^q(x_{i-2}, z)\int\limits_{[x_i, x_{i+1}]}K_{n, 1}^q(x, x_{i-2})d\mu(x)
$$
$$
\ll\sum\limits_{i: x_{i-2}\geq z}\int\limits_{[x_i, x_{i+1}]}K^q_{n, 0}(x, z)d\mu(x)\ll \int\limits_{[z,\infty]} K_{n, 0}^q(x, z)d\mu(x).
$$
Then
$$
\int\limits_{[z, \infty]}K_{1, 0}^q(t, z)d\eta_{n, 1}(t)\ll \int\limits_{[z,\infty]} K_{n, 0}^q(x, z)d\mu(x).
$$
Replacing the obtained estimate in the expression $\hat{B}_{n, 1, 0}$, we get
\begin{equation}\label{eq3.13}
\hat{B}_{n, 1, 0}\ll (B^+_{n, 0})^q.
\end{equation}
Since $\eta_{n, 1}([z, \infty])=\sum\limits_{i: x_{i-2}\geq z}\int\limits_{[x_i, x_{i+1}]}K_{n, 1}^q(x, x_{i-2})d\mu(x)\ll \int\limits_{[z, \infty]}K_{n, 1}^q(x, z)d\mu(x)$,
then $\hat{B}_{n, 1, 1}\ll (B^+_{n, 1})^q$. Thus, taking into account \eqref{eq3.12} and \eqref{eq3.13}, we have
$$
J_{n, 1}\ll \max\{B^+_{n, 0}, B^+_{n, 1}\}^q\|f\|_p^q,
$$
i.e., relation \eqref{eq3.11.2} holds for $j=1$ for any $n>1$.

Let $1\leq k<n-1$ and relation \eqref{eq3.11.2} hold for $j=1,2,...,k$ for any $n>k$ and $\mu\in \Psi$. We show that it holds for $j=k+1$. From \eqref{eq3.11.1} we obtain
\begin{equation}\label{F3.3}
J_{n,k+1}\leq\int\limits_{[0, \infty]}\left(\int\limits_0^t K_{k+1}(t, s)v(s)f(s)ds\right)d\eta_{n, k+1}(t).
\end{equation}
Since  $\eta_{n, k+1}\in \Psi$, then from \eqref{F3.3} and \eqref{eq3.6} we have
\begin{equation}\label{F3.4}
\int\limits_{[0, \infty]}\left(\int\limits_0^t K_{k+1}(t, s)v(s)f(s)ds\right)d\eta_{n, k+1}(t)\ll J_{n,k+1,k+1}+J_{n,k+1, 0}+\sum\limits_{\gamma=1}^{k}J_{n,k+1,\gamma}.
\end{equation}
From  \eqref{eq3.8} and \eqref{eq3.11} we respectively get
\begin{equation}\label{F3.5}
 J_{n,k+1,k+1}\ll (B^+_{n,k+1,k+1})^q\|f\|^q_p,~~J_{n,k+1,0} \ll (B^+_{n,k+1,0})^q\|f\|^q_p,
\end{equation}
where
\begin{multline*}
(B^+_{n,k+1,k+1})^q=\left( \int\limits_{(0, \infty]}\left(\eta_{n, k+1}([x,\infty]) \right)^{\frac{p}{p-q}}\left(\int\limits_{0}^{x}K_{k+1}^{p'}(x, s)v^{p'}(s)ds\right)^{\frac{p(q-1)}{p-q}}\right.\\
\left.d\left(\int\limits_{0}^{x}K_{k+1}^{p'}(x, s)v^{p'}(s)ds\right)\right)^{\frac{p-q}{p}},
\end{multline*}
$$
(B^+_{n,k+1,0})^q=\left(\int\limits_0^\infty \left(\int\limits_{[x,\infty]} K_{k+1, 0}^q(t, x)\,d\eta_{n, k+1}(t)\right)^{\frac{p}{p-q}}\left(\int\limits_0^x v^{p'}(s)ds\right)^{\frac{p(q-1)}{p-q}}v^{p'}(x)dx\right)^{\frac{p-q}{p}}.
$$
Since
$$
\eta_{n, k+1}([x,\infty])=\sum\limits_{i:x_{i-2}\geq x}\int\limits_{[x_i,x_{i+1}]}K_{n, k+1}^q(t, x_{i-2})d\mu(t)\ll \int\limits_{[x,\infty]}K_{n, k+1}^q(t, x)d\mu(t),
$$  and
$$
\int\limits_{[x,\infty]} K_{k+1, 0}^q(t, x)\,d\eta_{n, k+1}(t)=\sum\limits_{i:x_{i-2}\geq x}K_{k+1, 0}^q(x_{i-2}, x)\int\limits_{[x_i,x_{i+1}]}K_{n, k+1}^q(t, x_{i-2})d\mu(t)
$$
\begin{equation}\label{F3.5+1}
=\sum\limits_{i:x_{i-2}\geq x}\int\limits_{[x_i,x_{i+1}]}K_{n, k+1}^q(t, x_{i-2})K_{k+1, 0}^q(x_{i-2}, x)d\mu(t)\ll\int\limits_{[x,\infty]}K_{n, 0}^q(t, x)d\mu(t)
\end{equation}
due to Remark  \ref{r2.0}, then
$$
(B^+_{n,k+1,k+1})^q\ll\left( \int\limits_{(0, \infty]}\left(\int\limits_{[x,\infty]}K_{n, k+1}^q(t, x)d\mu(t)\right)^{\frac{p}{p-q}}\left(\int\limits_{0}^{x}K_{k+1}^{p'}(x, s)v^{p'}(s)ds\right)^{\frac{p(q-1)}{p-q}}\right.
$$
$$
\left.\times d\left(\int\limits_{0}^{x}K_{k+1}^{p'}(x, s)v^{p'}(s)ds\right)\right)^{\frac{p-q}{p}}=(B^+_{n,k+1})^q,
$$
$$
(B^+_{n,k+1,0})^q\ll\left(\int\limits_0^\infty \left( \int\limits_{[x,\infty]}K_{n, 0}^q(t, x)d\mu(t)\right)^{\frac{p}{p-q}}\left(\int\limits_0^x v^{p'}(s)ds\right)^{\frac{p(q-1)}{p-q}}v^{p'}(x)dx\right)^{\frac{p-q}{p}}
=(B^+_{n,0})^q.
$$
Since $1\leq \gamma\leq k$ in \eqref{F3.4}, by the assumption, we have
$$
J_{n,k+1,\gamma}\ll \max\limits_{0\leq i\leq \gamma}(B^+_{n,k+1,i})^q\|f\|_p^q,
$$
where
$$
(B^+_{n,k+1,i})^q=\left(\int\limits_{(0, \infty]}\left(\int\limits_{[z,\infty]} K_{k+1, i}^q(x, z) d\eta_{n, k+1}(x)\right)^{\frac{p}{p-q}}\left(\int\limits_0^z K_i^{p'}(z, s)v^{p'}(s)ds\right)^{\frac{p(q-1)}{p-q}}\right.
$$
$$
\left.\times d\left(\int\limits_0^z K^{p'}_i (z, t) v^{p'}(t)dt\right)\right)^{\frac{p-q}{p}}
$$
$$
\ll\left(\int\limits_{(0, \infty]}\left(\int\limits_{[z, \infty]} K_{n, i}^q(x, z) d\mu(x)\right)^{\frac{p}{p-q}}\left(\int\limits_0^z K_i^{p'}(z, s)v^{p'}(s)ds\right)^{\frac{p(q-1)}{p-q}}\right.
$$
$$
\left.\times d\left(\int\limits_0^z K^{p'}_i (z, t) v^{p'}(t)dt\right)\right)^{\frac{p-q}{p}}=(B^+_{n,i})^q.
$$
In the last relation, as for \eqref{F3.5+1}, we have used the expression for $d\eta_{n, k+1}(x)$ and Remark~\ref{r2.0}. Then
\begin{equation}\label{F3.6}
J_{n,k+1,\gamma}\ll \max\limits_{0\leq i\leq \gamma}(B^+_{n,i})^q\|f\|_p^q,~~\gamma=1,2,...,k.
\end{equation}
Now, from \eqref{F3.3}, \eqref{F3.4}, \eqref{F3.5} and \eqref{F3.6} we deduce that \eqref{eq3.11.2} holds for $j=k+1$.

Thus, we have proved that \eqref{eq3.11.2} holds. Hence, \eqref{eq3.14} holds, which, together with \eqref{eq3.22}, gives that
$\|\mathbf{K^+}\|_{p\rightarrow q}\approx B^+_{n}$. Let us show that relation \eqref{eq3.2} holds. Let  $i\in\{0,1,...,n\}$.

The function  $\int\limits_{[z,\infty]} K_{n, i}^q(x, z) d\mu(x)$ does not increase,  therefore, by Remark \ref{r2.2}, there exists a Borel measure $\theta_{n,i}$ such that $\int\limits_{[z,\infty]} K_{n, i}^q(x, z) d\mu(x)=\int\limits_{[s,\infty]}d\theta_{n,i}(z)$. By Lemma 2 from \cite{P2}, we have
$$
\left(\int\limits_{[z,\infty]} K_{n, i}^q(x, z) d\mu(x)\right)^{\frac{p}{p-q}}=\left(\int\limits_{[s,\infty]}d\theta_{n,i}(z)\right)^{\frac{p}{p-q}}\approx \int\limits_{[s,\infty]}\left(\int\limits_{[x,\infty]}d\theta_{n,i}(z)\right)^{\frac{q}{p-q}}d\theta_{n,i}(x).
$$
Replacing this relation into the expression $B^+_{n,i}$ and using Fubini's theorem, we get
\begin{multline*}
(B^+_{n,i})^{\frac{pq}{p-q}}\approx\int\limits_{[0, \infty]}\left(\int\limits_0^z K_i^{p'}(z, s)v^{p'}(s)ds\right)^{\frac{q(p-1)}{p-q}}\left(\int\limits_{[z,\infty]} K_{n, i}^q(x, z) d\mu(x)\right)^{\frac{q}{p-q}}\\
\times d\left(-\int\limits_{[z,\infty]} K_{n, i}^q(x, z) d\mu(x)\right).
\end{multline*}
Hence, relation \eqref{eq3.2} holds.  The proof of Theorem \ref{th3.1} is complete.
\end{proof}

Now, we consider operator \eqref{eq2.2} from $L_{p,\mu}(I)$ to $L_{q}(I)$ when its kernel $K(\cdot,\cdot)\in \mathcal{O}_n^{-}$, $n\geq1$. In \eqref{1.5} we assume that $K_0(\cdot,\cdot)\equiv 1$.

We can similarly prove the following statement.

\begin{theorem}\label{th3.2} Let $1<q<p<\infty$,  $\mu\in\Psi$ and $K(\cdot,\cdot)\equiv K_n(\cdot, \cdot)\in \mathcal{O}_n^{-}$, $n>1$. Then operator \eqref{eq2.2} is bounded from $L_{p,\mu}(I)$ to $L_{q}(I)$ if and only if $B^-_n=\max\limits_{0\leq i\leq n}B^-_{i,n}<\infty$. Moreover, $\|\mathbf{K^-}\|_{p\rightarrow q}\approx B^-_n$, where $\|\mathbf{K^-}\|_{p\rightarrow q}$ is the norm of operator \eqref{eq2.2}  from $L_{p,\mu}(I)$ to $L_{q}(I)$ and
\begin{multline*}
B^-_{i, n}=\left(\int\limits_{[0, \infty]}\left(\int\limits_0^z K_{i, n}^q(z, s) v^q(s)ds\right)^{\frac{p}{p-q}}\left(\int\limits_{[z, \infty]} K_i^{p'}(x, z)
 d\mu(x)\right)^{\frac{p(q-1)}{p-q}}\right.\\
\left.\times d\left(-\int\limits_{[z, \infty]} K_i^{p'}(x, z)d\mu(x)\right)\right)^{\frac{p-q}{pq}}
\end{multline*}
\begin{multline*}
\approx\left(\int\limits_{(0, \infty]}\left(\int\limits_{[z, \infty]} K_i^{p'}(x, z)d\mu(x)\right)^{\frac{q(p-1)}{p-q}}\left(\int\limits_0^z K_{i, n}^q(z, s) v^q(s)ds\right)^{\frac{q}{p-q}}\right.\\
\left.\times d\left(\int\limits_0^z K_{i, n}^q(z, s) v^q(s)ds\right)\right)^{\frac{p-q}{pq}},~i=0,1,...,n.
\end{multline*}
\end{theorem}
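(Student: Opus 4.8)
The plan is to follow, in the dual configuration, the proof of Theorem~\ref{th3.1}: one exchanges throughout the two arguments of the kernel (so that $\int_0^x$ becomes $\int_{[x,\infty]}$, ``non-decreasing in the first argument'' becomes ``non-increasing in the second'', the class $\mathcal{O}_\gamma^{+}$ becomes $\mathcal{O}_\gamma^{-}$, and the Hardy operator becomes the dual Hardy operator), and one uses the duality of \eqref{eq2.2} and \eqref{eq2.1} with respect to $\int_0^\infty f(x)g(x)\,d\mu(x)$. As indicated before the statement, we normalize \eqref{1.5} so that $K_0(\cdot,\cdot)\equiv1$: for $x\geq t\geq s>0$,
$$
\bar h_n^{-1}\Bigl(K_{0,n}(t,s)+\sum_{\gamma=1}^{n-1}K_\gamma(x,t)K_{\gamma,n}(t,s)+K_n(x,t)\Bigr)\leq K_n(x,s)\leq\bar h_n\Bigl(K_{0,n}(t,s)+\sum_{\gamma=1}^{n-1}K_\gamma(x,t)K_{\gamma,n}(t,s)+K_n(x,t)\Bigr),
$$
with $K_\gamma\in\mathcal{O}_\gamma^{-}$ and, by Remark~\ref{r2.0}, the $K_{\gamma,n}$ non-decreasing in the first and non-increasing in the second argument and satisfying the nesting $K_{i,\gamma}(x,t)K_{\gamma,j}(t,s)\leq K_{i,j}(x,s)$.

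\emph{Necessity.} Assume \eqref{eq2.2} bounded from $L_{p,\mu}(I)$ to $L_q(I)$; I would establish $B^-_n\ll\|\mathbf{K^-}\|_{p\rightarrow q}$ in three steps. Using $K_n(x,s)\gg K_{0,n}(x,s)$ and $K_n(t,s)\gg K_{0,n}(x,s)$ for $t\geq x\geq s$, Lemma~1 from \cite{P2}, Fubini and Remark~\ref{r2.1}, the boundedness inequality reduces to a dual weighted Hardy inequality $\bigl(\int_{(0,\infty]}(\int_{[z,\infty]}g\,d\mu)^q\,d\eta_{0,n}(z)\bigr)^{1/q}\ll\|\mathbf{K^-}\|_{p\rightarrow q}\|g\|_{p,\mu}$ with $\eta_{0,n}((0,z])=\int_0^z K_{0,n}^q(z,s)v^q(s)\,ds$, whence $B^-_{0,n}\ll\|\mathbf{K^-}\|_{p\rightarrow q}$ by Theorem~3 from \cite{P2}. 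Passing to the dual operator \eqref{eq2.1} acting from $L_{q'}(I)$ to $L_{p',\mu}(I)$ and using that $K_n$ is non-increasing in the second argument, Lemma~2 from \cite{P2}, Fubini and Remark~\ref{r2.2} reduce the matter to a weighted Hardy inequality which, by Theorem~2 from \cite[\S1.3, 1.3.3]{Maz2}, gives $B^-_{n,n}\ll\|\mathbf{K^-}\|_{p\rightarrow q}$; this holds for every $n\geq1$. Finally, for $1\leq i\leq n-1$, mollifying the lower bound of the displayed relation against an arbitrary probability density $\varphi$ on $I$ gives $K_n(x,s)\gg\int_s^x K_i(x,t)K_{i,n}(t,s)\varphi(t)\,dt$; inserted into the boundedness inequality this produces an inequality of the same form with kernel $K_i\in\mathcal{O}_i^{-}$, to which the preceding step applies, and, taking $\varphi$ to be a Dirac mass and using the monotonicity of $K_{i,n}$ in the second argument, one obtains $B^-_{i,n}\ll\|\mathbf{K^-}\|_{p\rightarrow q}$.

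\emph{Sufficiency.} Let $B^-_n<\infty$ and $0\leq g\in L_{p,\mu}(I)$ have compact support. On the set where $G_n(s):=\int_{[s,\infty]}K_n(x,s)g(x)\,d\mu(x)>0$, this function is non-increasing and tends to $0$ at $+\infty$, so it is exhausted dyadically along the levels $(\bar h_n+1)^k$ analogously to Theorem~\ref{th3.1} (for $G_n$ in place of the non-decreasing $F_n$), yielding breakpoints $x_i$ and intervals $\bar I_i$. Applying the displayed relation with middle point $x_{i-2}$ ($x_{i-2}$ lying where $G_n$ is small) to $G_n(x_i)-\bar h_n G_n(x_{i-2})$ gives
$$
(\bar h_n+1)^{k_i-1}\ll\int_{[x_i,x_{i-2})}K_n(x,x_i)g(x)\,d\mu(x)+K_{0,n}(x_{i-2},x_i)\int_{[x_{i-2},\infty]}g\,d\mu+\sum_{\gamma=1}^{n-1}K_{\gamma,n}(x_{i-2},x_i)\int_{[x_{i-2},\infty]}K_\gamma(x,x_{i-2})g(x)\,d\mu(x),
$$
so that $\|\mathbf{K^-}g\|_q^q=\int_0^\infty v^q(s)G_n^q(s)\,ds\ll J^-_{n,n}+J^-_{0,n}+\sum_{\gamma=1}^{n-1}J^-_{\gamma,n}$, each $J^-_{\cdot,\cdot}$ being the $q$-th power of the corresponding summand weighted by $\int_{\bar I_i}v^q\,ds$. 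I would estimate $J^-_{n,n}$ by H\"older's inequality, first in the $\mu$-integral (exponents $p',p$) and then in the sum (exponents $\tfrac{p}{p-q},\tfrac pq$), collapsing the outcome to $(B^-_{n,n})^q\|g\|_{p,\mu}^q$ via Lemma~2 from \cite{P2}, Remark~\ref{r2.1} and Fubini. Using the monotonicity of $K_{0,n}$ in the second argument to bound $(\int_{\bar I_i}v^q\,ds)\,K_{0,n}^q(x_{i-2},x_i)$ by $\int_{\bar I_i}K_{0,n}^q(x_{i-2},s)v^q(s)\,ds$, the term $J^-_{0,n}$ is majorized by $\int_{(0,\infty]}(\int_{[t,\infty]}g\,d\mu)^q\,d\eta_{0,n}(t)$ with $\eta_{0,n}\in\Psi$ discrete, and Theorem~3 from \cite{P2} together with the estimate $\eta_{0,n}((0,z])\ll\int_0^z K_{0,n}^q(z,s)v^q(s)\,ds$ (a consequence of the monotonicity in Remark~\ref{r2.0}) bounds it by $\ll(B^-_{0,n})^q\|g\|_{p,\mu}^q$. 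Finally, each $J^-_{\gamma,n}$ is bounded by $\int_{(0,\infty]}(\int_{[t,\infty]}K_\gamma(x,t)g(x)\,d\mu(x))^q\,d\eta_{\gamma,n}(t)$ with $\eta_{\gamma,n}\in\Psi$ discrete; since the split above is valid with Lebesgue measure replaced by any member of $\Psi$, an induction on $\gamma$ (base case $\gamma=1$: Theorem~\ref{2.2}), in which the $\eta_{\gamma,n}$-conditions are dominated by the $\mu$-conditions through the nesting of Remark~\ref{r2.0}, gives $J^-_{\gamma,n}\ll\max_{0\leq i\leq\gamma}(B^-_{i,n})^q\|g\|_{p,\mu}^q$. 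Summation yields $\|\mathbf{K^-}\|_{p\rightarrow q}\ll B^-_n$, hence $\|\mathbf{K^-}\|_{p\rightarrow q}\approx B^-_n$.

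The second expression for $B^-_{i,n}$ follows exactly as \eqref{eq3.2} did: one represents $\int_{[z,\infty]}K_i^{p'}(x,z)\,d\mu(x)=\int_{[z,\infty]}d\theta_{i,n}(w)$ (Remark~\ref{r2.2}) and $\int_0^z K_{i,n}^q(z,s)v^q(s)\,ds=\int_{(0,z]}d\eta_{i,n}(w)$ (Remark~\ref{r2.1}), then interchanges the two measures by two applications of Lemma~2 from \cite{P2} and Fubini. I expect the main difficulty to be bookkeeping rather than a new idea: in the dual configuration the factors $K_\gamma$ from $\mathcal{O}_\gamma^{-}$ sit at the large-argument end, the auxiliary kernels $K_{\gamma,n}$ are monotone in their \emph{second} argument, the dyadic exhaustion and the splitting run toward larger argument, and every discretizing measure $\eta_{0,n},\eta_{\gamma,n}$ must be certified to belong to $\Psi$ and to be dominated by the corresponding $\mu$-integral; keeping all these monotonicity directions consistent so that each reduction lands on the correct one of the two weighted Hardy inequalities, and applying the nesting of Remark~\ref{r2.0} in the right direction, is where the care goes.
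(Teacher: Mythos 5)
Your proposal is correct and follows essentially the paper's own route: the paper proves Theorem~\ref{th3.2} only by the remark that it is obtained ``similarly'' to Theorem~\ref{th3.1}, and your argument is precisely that dualized scheme — the normalized $\mathcal{O}_n^{-}$ relation with $K_0\equiv1$, the reductions to the Hardy-type inequalities of \cite{Maz2} and \cite{P2} plus the mollification/Dirac-mass trick for the intermediate indices in the necessity part, and the dyadic level decomposition of the non-increasing function $G_n$, the three-way splitting, the discrete measures $\eta_{\gamma,n}$ dominated via Remark~\ref{r2.0}, and the induction with Theorem~\ref{2.2} as base in the sufficiency part, together with the same Lemma~2/Fubini interchange for the second form of $B^-_{i,n}$. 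Apart from minor bookkeeping of the index orientation in the discretization (which you flag yourself), nothing is missing.
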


Now, we consider operator \eqref{eq2.2} from $L_{p,\mu}(I)$ to $L_q(I)$ and operator \eqref{eq2.1} from $L_{p}(I)$ to $L_{q,\mu}(I)$ for $1<q<p<\infty$ when their kernels belong to the classes $ \mathcal{O}_n^{+}$ and
$\mathcal{O}_n^{-}$, $n>1$, respectively.  In view of mutual duality of operators \eqref{eq2.2} and \eqref{eq2.1}, from Theorems \ref{th3.2} and \ref{th3.1} we respective have the following two theorems.

\begin{theorem}\label{th3.3} Let $1<q<p<\infty$, $\mu\in\Psi$ and $K(\cdot,\cdot)\equiv K_n(\cdot, \cdot)\in \mathcal{O}_n^{-}$, $n\geq1$. Then operator \eqref{eq2.1} is bounded from $L_{p}(I)$ to $L_{q,\mu}(I)$ if and only if $\bar{B}^+_n=\max\limits_{0\leq i\leq n}\bar{B}^+_{i,n}<\infty$. Moreover, $\|\mathbf{K}^+\|_{p\rightarrow q}\approx \bar{B}_n^+$, where $\|\mathbf{K}^+\|_{p\rightarrow q}$ is the norm of operator \eqref{eq2.1} from $L_{p}(I)$ to $L_{q,\mu}(I)$ and
\begin{multline*}
\bar{B}^+_{i, n}=\left(\int\limits_{[0, \infty]}\left(\int\limits_0^z K_{i, n}^{p'}(z, s) v^{p'}(s)ds\right)^{\frac{q(p-1)}{p-q}}\left(\int\limits_{[z, \infty]} K_i^{q}(x, z)d\mu(x)\right)^{\frac{q}{p-q}}\right.\\
\left.\times d\left(-\int\limits_{[z, \infty]} K_i^{q}(x, z)d\mu(x)\right)\right)^{\frac{p-q}{pq}}
\end{multline*}
\begin{multline*}
\approx\left(\int\limits_{(0, \infty]}\left(\int\limits_{[z, \infty]} K_i^{q}(x, z)d\mu(x)\right)^{\frac{p}{p-q}}\left(\int\limits_0^z K_{i, n}^{p'}(z, s) v^{p'}(s)ds\right)^{\frac{p(q-1)}{p-q}}\right.\\
\left.\times d\left(\int\limits_0^z K_{i, n}^{p'}(z, s) v^{p'}(s)ds\right)\right)^{\frac{p-q}{pq}},~i=0,1,...,n.
\end{multline*}

\end{theorem}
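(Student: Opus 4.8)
The plan is to derive Theorem~\ref{th3.3} from Theorem~\ref{th3.2} by duality, exactly as indicated in the sentence preceding its statement. Recall from the discussion before Theorem~\ref{2.1} that operator \eqref{eq2.2} is the Banach-space adjoint of operator \eqref{eq2.1} with respect to the pairing $\int_0^\infty f(x)g(x)\,d\mu(x)$, once one identifies $\bigl(L_{q,\mu}(I)\bigr)^{*}=L_{q',\mu}(I)$ and $\bigl(L_{p}(I)\bigr)^{*}=L_{p'}(I)$. Hence $\mathbf{K^+}$ is bounded from $L_p(I)$ to $L_{q,\mu}(I)$ if and only if $\mathbf{K^-}$ is bounded from $L_{q',\mu}(I)$ to $L_{p'}(I)$, and the two operator norms coincide. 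Since $1<q<p<\infty$ forces $1<p'<q'<\infty$, while the kernel $K_n(\cdot,\cdot)\in\mathcal{O}_n^{-}$ is the same for both operators, the boundedness of $\mathbf{K^-}$ from $L_{q',\mu}(I)$ to $L_{p'}(I)$ is covered, for $n>1$, by Theorem~\ref{th3.2} applied with the pair $(q',p')$ substituted for $(p,q)$; for $n=1$ one argues identically, invoking Theorem~\ref{2.2} in place of Theorem~\ref{th3.2}.

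Theorem~\ref{th3.2} with parameters $(q',p')$ then says that $\mathbf{K^-}\colon L_{q',\mu}(I)\to L_{p'}(I)$ is bounded if and only if $\max_{0\le i\le n}B^-_{i,n}<\infty$, the $B^-_{i,n}$ being formed from exponents built out of $q'$ and $p'$, with $\|\mathbf{K^-}\|_{q'\rightarrow p'}\approx\max_{0\le i\le n}B^-_{i,n}$. It remains to rewrite these quantities in terms of $p$ and $q$. Using $(q')'=q$, $(p')'=p$ together with the elementary identities
\begin{gather*}
\frac{q'-p'}{q'p'}=\frac{1}{p'}-\frac{1}{q'}=\frac{1}{q}-\frac{1}{p}=\frac{p-q}{pq},\\
\frac{q'}{q'-p'}=\frac{q(p-1)}{p-q},\qquad \frac{q'(p'-1)}{q'-p'}=\frac{q}{p-q},
\end{gather*}
a direct comparison shows that each kernel power and each exponent of $B^-_{i,n}$, computed with $(q',p')$, matches the corresponding one in $\bar B^+_{i,n}$: the inner integral $\int_0^z K_{i,n}^{p'}(z,s)v^{p'}(s)\,ds$ carries the power $\frac{q(p-1)}{p-q}$, the $\mu$-integral $\int_{[z,\infty]}K_i^{q}(x,z)\,d\mu(x)$ carries the power $\frac{q}{p-q}$, and the outermost power is $\frac{p-q}{pq}$. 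Therefore $B^-_{i,n}$, evaluated at $(q',p')$, is precisely $\bar B^+_{i,n}$, so the boundedness criterion reads $\bar B^+_n=\max_{0\le i\le n}\bar B^+_{i,n}<\infty$ and $\|\mathbf{K^+}\|_{p\rightarrow q}=\|\mathbf{K^-}\|_{q'\rightarrow p'}\approx\bar B^+_n$.

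Finally, the equivalence of $\bar B^+_{i,n}$ with its second (``$\approx$'') expression is inherited directly from the two forms of $B^-_{i,n}$ already stated in Theorem~\ref{th3.2} after the same change of parameters; alternatively it is reproved exactly as at the end of the proof of Theorem~\ref{th3.1}, by observing that $z\mapsto\int_{[z,\infty]}K_i^{q}(x,z)\,d\mu(x)$ is non-increasing, representing it via Remark~\ref{r2.2} as $\int_{[z,\infty]}d\theta(t)$ for a suitable Borel measure $\theta$, and then applying Lemma~2 from \cite{P2} and Fubini's theorem. The argument carries no new analytic content: the only point that needs care is purely bookkeeping, namely tracking which exponent and which kernel power attach to $\int_0^z K_{i,n}^{p'}v^{p'}\,ds$ and which to $\int_{[z,\infty]}K_i^{q}\,d\mu$ under $(p,q)\mapsto(q',p')$, since everything analytic is supplied by the already established Theorem~\ref{th3.2} (and Theorem~\ref{2.2} when $n=1$).
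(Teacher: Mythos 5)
Your proposal is correct and is essentially the paper's own argument: Theorem~\ref{th3.3} is obtained from Theorem~\ref{th3.2} (and Theorem~\ref{2.2} for $n=1$) by the duality $\|\mathbf{K^+}\|_{L_p\to L_{q,\mu}}=\|\mathbf{K^-}\|_{L_{q',\mu}\to L_{p'}}$ and the substitution $(p,q)\mapsto(q',p')$, and your exponent bookkeeping ($q'/(q'-p')=q(p-1)/(p-q)$, etc.) correctly identifies $B^-_{i,n}$ at $(q',p')$ with $\bar B^+_{i,n}$.
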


\begin{theorem}\label{th3.4} Let $1<q<p<\infty$,  $\mu\in\Psi$ and $K(\cdot,\cdot)\equiv K_n(\cdot, \cdot)\in \mathcal{O}_n^{+}$, $n>1$. Then operator \eqref{eq2.2} is bounded from $L_{p,\mu}(I)$ to $L_{q}(I)$ if and only if $\bar{B}^-_n=\max\limits_{0\leq i\leq n}\bar{B}^-_{n,i}<\infty$. Moreover, $\|\mathbf{K}^-\|_{p\rightarrow q}\approx \bar{B}_n^-$, where $\|\mathbf{K}^-\|_{p\rightarrow q}$ is the norm of operator \eqref{eq2.2} from $L_{p,\mu}(I)$ to $L_{q}(I)$ and
\begin{multline*}
\bar{B}^-_{n, i}=\left(\int\limits_{(0, \infty]}\left(\int\limits_{[z,\infty]} K_{n, i}^{p'}(x, z) d\mu(x)\right)^{\frac{q(p-1)}{p-q}}\left(\int\limits_0^z K_i^{q}(z, s)v^{q}(s)ds\right)^{\frac{q}{p-q}}\right.\\
\left.\times d\left(\int\limits_0^z K^{q}_i (z, t) v^{q}(t)dt\right)\right)^{\frac{p-q}{pq}}
\end{multline*}
\begin{multline*}
\approx\left(\int\limits_{[0, \infty]}\left(\int\limits_0^z K_i^{q}(z, s)v^{q}(s)ds\right)^{\frac{p}{p-q}}\left(\int\limits_{[z,\infty]} K_{n, i}^{p'}(x, z) d\mu(x)\right)^{\frac{p(q-1)}{p-q}}\right.\\
\left.\times d\left(-\int\limits_{[z,\infty]} K_{n, i}^{p'}(x, z) d\mu(x)\right)\right)^{\frac{p-q}{pq}},~i=1,2,...,n-1.
\end{multline*}

\end{theorem}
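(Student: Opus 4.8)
The plan is to deduce Theorem~\ref{th3.4} from Theorem~\ref{th3.1} by duality, as announced before the statement. As recorded in Section~2, operator~\eqref{eq2.1} is the adjoint of operator~\eqref{eq2.2}: for nonnegative $g,h$ of compact support Fubini's theorem gives
\[
\int_0^\infty \bigl(\mathbf{K}^- g\bigr)(s)\,h(s)\,ds=\int_{[0,\infty]} g(x)\,\bigl(\mathbf{K}^+ h\bigr)(x)\,d\mu(x),
\]
so $\mathbf{K}^-$ is bounded from $L_{p,\mu}(I)$ to $L_q(I)$ if and only if $\mathbf{K}^+$ is bounded from $L_{q'}(I)$ to $L_{p',\mu}(I)$, with equal norms. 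Since $1<q<p<\infty$ is the same as $1<p'<q'<\infty$, the ordered pair $(q',p')$ satisfies the hypothesis of Theorem~\ref{th3.1} with its ``$p$'' equal to $q'$ and its ``$q$'' equal to $p'$, and $K_n\in\mathcal{O}_n^{+}$ with $n>1$ (in particular $n\ge 1$). Thus Theorem~\ref{th3.1} applies to $\mathbf{K}^+\colon L_{q'}(I)\to L_{p',\mu}(I)$ and characterizes its boundedness by the finiteness of $\max_{0\le i\le n}B^+_{n,i}$, where the $B^+_{n,i}$ are the quantities defined before Theorem~\ref{th3.1}, now read off with the exponent substitution $p\mapsto q'$, $q\mapsto p'$ (hence $p'\mapsto(q')'=q$ and $q'\mapsto(p')'=p$).

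It then remains to check that these substituted quantities are exactly the $\bar B^-_{n,i}$ in the present statement, which is just a rewriting of conjugate indices. Using $q'-p'=\tfrac{p-q}{(p-1)(q-1)}$ together with $\tfrac{q'}{q'-p'}=\tfrac{q(p-1)}{p-q}$, $\tfrac{q'(p'-1)}{q'-p'}=\tfrac{q}{p-q}$ and $\tfrac{q'-p'}{p'q'}=\tfrac{p-q}{pq}$, and observing that the substitution turns $v^{p'}$ into $v^{q}$, $K_i^{p'}$ into $K_i^{q}$, and $K_{n,i}^{q}$ into $K_{n,i}^{p'}$, one sees that $B^+_{n,i}$ with $(p,q)$ replaced by $(q',p')$ is, factor by factor, the first displayed formula for $\bar B^-_{n,i}$; applying the same substitution to relation~\eqref{eq3.2} (which was established for all $0\le i\le n$ inside the proof of Theorem~\ref{th3.1}) gives the second, ``$\approx$'', formula. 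Consequently
\[
\|\mathbf{K}^-\|_{p\to q}=\|\mathbf{K}^+\|_{q'\to p'}\approx\max_{0\le i\le n}\bar B^-_{n,i}=\bar B^-_n,
\]
which is the assertion.

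There is no genuine difficulty here; the only delicate point is the bookkeeping of the previous paragraph --- matching each factor of $B^+_{n,i}$ with the correct factor of $\bar B^-_{n,i}$ under conjugation, and making sure the two integrating measures $z\mapsto\int_0^z K_i^{q}(z,s)v^{q}(s)\,ds$ (non-decreasing) and $z\mapsto\int_{[z,\infty]}K_{n,i}^{p'}(x,z)\,d\mu(x)$ (non-increasing) appear in the right places in the two equivalent forms. The companion facts used around this statement --- that Theorem~\ref{th3.2} is obtained by repeating the proof of Theorem~\ref{th3.1} with the $\mathcal{O}_n^{-}$ decomposition instead of the $\mathcal{O}_n^{+}$ one, and that Theorem~\ref{th3.3} follows from Theorem~\ref{th3.2} by the same duality --- require the analogous, equally routine, verifications.
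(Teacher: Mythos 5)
Your argument is exactly the paper's: Theorem~\ref{th3.4} is obtained from Theorem~\ref{th3.1} by the mutual duality of operators \eqref{eq2.1} and \eqref{eq2.2} (so $\|\mathbf{K}^-\|_{L_{p,\mu}\to L_q}=\|\mathbf{K}^+\|_{L_{q'}\to L_{p',\mu}}$), followed by the conjugate-exponent substitution $p\mapsto q'$, $q\mapsto p'$ in $B^+_{n,i}$ and in relation \eqref{eq3.2}, and your exponent bookkeeping checks out. This matches the paper's (unwritten, asserted-by-duality) proof, so nothing further is needed.
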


Let $\mu\in \Psi$.  We consider the integral operator
\begin{equation}\label{eq4.1}
\mathbb{K^-}g(s)=\int\limits_s^\infty K(x, s)u(x)g(x)dx,~s>0,
\end{equation}
acting from $L_{p}(I)$ to $L_{q, \mu}(I)$, and the integral operator
\begin{equation}\label{eq4.2}
\mathbb{K^+}f(x)=\int\limits_{[0,x]}u(x) K(x, s)f(s)d\mu(s), ~x>0,
\end{equation}
acting from  $L_{p, \mu}(I)$ to $L_{q}(I)$.

Proceeding as above, we obtain analogues of Theorems~\ref{th3.1}--\ref{th3.4}.

\begin{theorem}\label{th3.5} Let $1<q<p<\infty$,  $\mu\in\Psi$ and $K(\cdot,\cdot)\equiv K_n(\cdot, \cdot)\in \mathcal{O}_n^{-}$, $n\geq1$. Then operator \eqref{eq4.1} is bounded from $L_{p}(I)$ to $L_{q,\mu}(I)$ if and only if $\hat{B}^-_n=\max\limits_{0\leq i\leq n}\hat{B}^-_{i, n}<\infty$. Moreover, $\|\mathbb{K^-}\|_{p\rightarrow q}\approx \hat{B}^-_n$, where $\|\mathbb{K^-}\|_{p\rightarrow q}$ is the norm of operator \eqref{eq4.1} from $L_{p}(I)$ to $L_{q,\mu}(I)$ and
\begin{multline*}
\hat{B}^-_{i, n}=\left(\int\limits_{[0,\infty)}\left(\int\limits_{[0,z]} K_{i, n}^q(z, x) d\mu(x)\right)^{\frac{p}{p-q}}\left(\int\limits_z^\infty K_i^{p'}(s, z)u^{p'}(s)ds\right)^{\frac{p(q-1)}{p-q}}\right.\\
\left.\times d\left(-\int\limits_z^\infty K^{p'}_i (t, z) u^{p'}(t)dt\right)\right)^{\frac{p-q}{pq}},~i=0,1,...,n.
\end{multline*}
\begin{multline*}
\approx
\left(\int\limits_{[0,\infty]}\left(\int\limits_z^\infty K_i^{p'}(s, z)u^{p'}(s)ds\right)^{\frac{q(p-1)}{p-q}}\left(\int\limits_{[0,z]} K_{i, n}^q(z, x)
 d\mu(x)\right)^{\frac{q}{p-q}}\right.\\
 \left.\times d\left(\int\limits_{[0,z]} K_{i, n}^q(z, x) d\mu(x)\right)\right)^{\frac{p-q}{pq}},~i=0,1,...,n.
\end{multline*}
\end{theorem}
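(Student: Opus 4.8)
The plan is to deduce Theorem~\ref{th3.5} from Theorem~\ref{th3.1} by an order-reversing change of variable that simultaneously interchanges the two endpoints of $I$, the classes $\mathcal{O}_n^{+}$ and $\mathcal{O}_n^{-}$, and the operators \eqref{eq4.1} and \eqref{eq2.1}. Fix the involution $\rho(x)=1/x$ of $I$ onto itself. To a Lebesgue-measurable function $g$ on $I$ I associate $\tilde g(\xi)=g(\rho(\xi))\,|\rho'(\xi)|^{1/p}$, so that $\|\tilde g\|_{L_p(I)}=\|g\|_{L_p(I)}$; to $\mu\in\Psi$ the push-forward $\tilde\mu=\rho_{*}\mu\in\Psi$, for which $\int_I h\,d\tilde\mu=\int_I(h\circ\rho)\,d\mu$; I set $\tilde v(\xi)=u(\rho(\xi))\,|\rho'(\xi)|^{1/p'}$ and, for a kernel $K_n$ on $\Omega$, set $\tilde K_n(\sigma,\xi)=K_n(\rho(\xi),\rho(\sigma))$, which is again defined on $\Omega$ because $\rho(\xi)\ge\rho(\sigma)$ iff $\sigma\ge\xi$. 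A direct computation, using $|\rho'(\rho(\sigma))|\,|\rho'(\sigma)|=1$ (valid for any involution), gives $(\mathbb{K}^{-}g)\circ\rho=\mathbf{K}^{+}\tilde g$, where $\mathbf{K}^{+}$ is operator \eqref{eq2.1} built from $\tilde K_n$, $\tilde v$ and $\tilde\mu$; hence $\|\mathbb{K}^{-}\|_{p\to q}=\|\mathbf{K}^{+}\|_{p\to q}$ between the corresponding spaces.

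The first substantive step is to verify, by induction on $n$, that $K_n\mapsto\tilde K_n$ sends $\mathcal{O}_n^{-}$ into $\mathcal{O}_n^{+}$. For $n=0$ a function of the first argument becomes a function of the second, so $\mathcal{O}_0^{-}\to\mathcal{O}_0^{+}$. For $n\ge1$ the monotonicity is transported correctly: since $K_n$ is non-increasing in its second argument, $\tilde K_n$ is non-decreasing in its first; and by Remark~\ref{r2.0} the factors $K_{i,n}$ --- non-decreasing in the first, non-increasing in the second argument --- are carried to $\tilde K_{n,i}(\sigma,\tau):=K_{i,n}(\rho(\tau),\rho(\sigma))$, which have exactly the monotonicity demanded for the decomposition \eqref{eq3.1}. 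Evaluating the two-sided bound \eqref{1.5} at the reflected triple $\rho(\xi)\ge\rho(\tau)\ge\rho(\sigma)$ turns it into the two-sided bound \eqref{eq3.1} for $\tilde K_n$, with inner kernels $\tilde K_i(\tau,\xi):=K_i(\rho(\xi),\rho(\tau))$ lying in $\mathcal{O}_i^{+}$ by the induction hypothesis. I expect this verification, together with the careful matching of the conventions for the generated Borel measures (Remarks~\ref{r2.1} and~\ref{r2.2}) at $0$ and $\infty$ after push-forward, to be the main --- if essentially clerical --- obstacle.

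It then remains to invoke Theorem~\ref{th3.1} for $\mathbf{K}^{+}$: it is bounded from $L_p(I)$ to $L_{q,\tilde\mu}(I)$ if and only if $\max_{0\le i\le n}B^{+}_{n,i}<\infty$, with norm $\approx\max_{0\le i\le n}B^{+}_{n,i}$, each $B^{+}_{n,i}$ also equalling the right-hand side of \eqref{eq3.2}. Transporting these quantities back through $\rho$ closes the argument: with $z'=\rho(z)$, the substitution sends $\int_{[z,\infty]}\tilde K_{n,i}^q(x,z)\,d\tilde\mu(x)$ to $\int_{[0,z']}K_{i,n}^q(z',x)\,d\mu(x)$, sends $\int_0^z\tilde K_i^{p'}(z,s)\tilde v^{p'}(s)\,ds$ to $\int_{z'}^\infty K_i^{p'}(s,z')u^{p'}(s)\,ds$ --- the exponent $1/p'$ in $\tilde v$ being chosen precisely so that $\tilde v^{p'}\,ds$ absorbs the Jacobian via $|\rho'(\rho(\sigma))|\,|\rho'(\sigma)|=1$ --- and, since the first of these is non-increasing and the second non-decreasing in $z$ while after reflection their monotonicities reverse, sends $d\bigl(\int_0^z\tilde K_i^{p'}\tilde v^{p'}\bigr)$ to $d\bigl(-\int_{z'}^\infty K_i^{p'}u^{p'}\bigr)$, while $\int_{(0,\infty]}$ turns into $\int_{[0,\infty)}$. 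Thus $B^{+}_{n,i}=\hat B^{-}_{i,n}$, $\|\mathbf{K}^{+}\|_{p\to q}=\|\mathbb{K}^{-}\|_{p\to q}$, and \eqref{eq3.2} becomes exactly the second (``$\approx$'') expression for $\hat B^{-}_{i,n}$ in the statement.

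Alternatively, one may bypass the reduction and repeat the proof of Theorem~\ref{th3.1} verbatim in the mirror direction: necessity by testing, by passage to the dual operator \eqref{eq4.2}, and by the $\varphi$-averaging device of \eqref{eq3.20}; sufficiency by decomposing the non-increasing function $F_n(s)=\int_s^\infty K_n(x,s)u(x)g(x)\,dx$ over the level sets of $s\mapsto\max\{k\in\mathbb{Z}:(h_n+1)^k\le F_n(s)\}$ and running the same induction on $j$ that yields \eqref{eq3.11.2}. On that route the induction on $j$ is again the crux; I find the reduction cleaner and would present it first.
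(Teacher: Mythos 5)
Your reduction is correct, but it is not the route the paper takes: the paper disposes of Theorem~\ref{th3.5} with the single remark ``Proceeding as above,'' i.e.\ it intends the mirror repetition of the proof of Theorem~\ref{th3.1} (decomposition of the non-increasing $F_n(s)=\int_s^\infty K_n(x,s)u(x)g(x)\,dx$ over level sets, duality with \eqref{eq4.2}, the $\varphi$-averaging trick, and the induction giving the analogue of \eqref{eq3.11.2}) --- exactly your second, ``alternative'' route. Your primary argument instead deduces Theorem~\ref{th3.5} from the already proved Theorem~\ref{th3.1} via the order-reversing involution $\rho(x)=1/x$, and the bookkeeping you sketch does go through: $(\mathbb{K}^-g)\circ\rho=\mathbf{K}^+\tilde g$ with $\tilde v=u(\rho(\cdot))|\rho'|^{1/p'}$, $\tilde\mu=\rho_*\mu\in\Psi$, $\|\tilde g\|_p=\|g\|_p$; reflecting \eqref{1.5} at $\rho(\xi)\ge\rho(\tau)\ge\rho(\sigma)$ yields \eqref{1.4}/\eqref{eq3.1} for $\tilde K_n$ with the correct monotonicities (and the normalization $K_0\equiv1$ used for Theorems~\ref{th3.1} and \ref{th3.2} is preserved), so $\mathcal{O}_n^-\to\mathcal{O}_n^+$ by induction; and since $\rho$ reverses order, the right-continuity convention of Remark~\ref{r2.1} is carried exactly onto the left-continuity convention of Remark~\ref{r2.2}, so $d\bigl(\int_0^z\tilde K_i^{p'}\tilde v^{p'}\bigr)$ indeed pushes to $d\bigl(-\int_z^\infty K_i^{p'}u^{p'}\bigr)$ and \eqref{eq3.2} to the second expression for $\hat B^-_{i,n}$, giving $B^+_{n,i}=\hat B^-_{i,n}$ up to the (harmless, and in the paper already loosely handled) endpoint conventions at $0$ and $\infty$ that you flag. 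What your approach buys is economy and transparency: no repetition of the long sufficiency induction, and the theorem is exhibited as literally the reflection of Theorem~\ref{th3.1}; what the paper's route buys is independence from any transformation bookkeeping and uniformity with how Theorems~\ref{th3.2}--\ref{th3.4} are obtained. Either way the substance rests on Theorem~\ref{th3.1}, which both you and the paper take as the established core.
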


\begin{theorem}\label{th3.6} Let $1<q<p<\infty$,  $\mu\in\Psi$ and $K(\cdot,\cdot)\equiv K_n(\cdot, \cdot)\in \mathcal{O}_n^{+}$, $n\geq1$. Then operator \eqref{eq4.2} is bounded from $L_{p,\mu}(I)$ to $L_{q}(I)$ if and only if $\hat{B}^+_n=\max\limits_{0\leq i\leq n}\hat{B}^+_{n,i}<\infty$. Moreover, $\|\mathbb{K^+}\|_{p\rightarrow q}\approx \hat{B}^+_n$, where $\|\mathbb{K^+}\|_{p\rightarrow q}$ is the norm of operator \eqref{eq4.2}  from $L_{p,\mu}(I)$ to $L_{q}(I)$ and
\begin{multline*}
\hat{B}^+_{n, i}=\left(\int\limits_{[0, \infty]}\left(\int\limits_z^\infty K_{n, i}^q(s, z) u^q(s)ds\right)^{\frac{p}{p-q}}\left(\int\limits_{[0, z]} K_i^{p'}(z, x)
 d\mu(x)\right)^{\frac{p(q-1)}{p-q}}\right.\\
\left.\times d\left(\int\limits_{[0, z]} K_i^{p'}(z, x)d\mu(x)\right)\right)^{\frac{p-q}{pq}}
\end{multline*}
\begin{multline*}
\approx\left(\int\limits_{[0, \infty)}\left(\int\limits_{[0, z]} K_i^{p'}(z, x)d\mu(x)\right)^{\frac{q(p-1)}{p-q}}\left(\int\limits_z^\infty K_{n, i}^q(s, z) u^q(s)ds\right)^{\frac{q}{p-q}}\right.\\
\left.\times d\left(-\int\limits_z^\infty K_{n, i}^q(s, z) u^q(s)ds\right)\right)^{\frac{p-q}{pq}},~i=0,1,...,n.
\end{multline*}
\end{theorem}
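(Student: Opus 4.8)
The plan is to deduce this theorem from Theorem~\ref{th3.2} (for $n\geq 2$) and from Theorem~\ref{2.2} (for $n=1$) by means of the involution $t\mapsto 1/t$ on $I$, under which operator \eqref{eq4.2} turns into an operator of the form \eqref{eq2.2} and the class $\mathcal{O}_n^{+}$ turns into the class $\mathcal{O}_n^{-}$. Concretely, I would set $x=1/\xi$, $s=1/\sigma$, let $\widetilde{\mu}$ be the Borel measure on $I$ with $\int g\,d\widetilde{\mu}=\int g(1/x)\,d\mu(x)$, and put $\widetilde{u}(\xi)=u(1/\xi)\xi^{-2/q}$, $\widetilde{f}(\sigma)=f(1/\sigma)$ and $\widehat{K}(\sigma,\xi)=K(1/\xi,1/\sigma)$ for $\sigma\geq\xi>0$ (well defined on $\Omega$ because $\sigma\geq\xi$ forces $1/\xi\geq1/\sigma$). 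A direct computation then gives $\xi^{-2/q}\mathbb{K}^{+}f(1/\xi)=\widetilde{u}(\xi)\int_{[\xi,\infty]}\widehat{K}(\sigma,\xi)\widetilde{f}(\sigma)\,d\widetilde{\mu}(\sigma)$, which is exactly operator \eqref{eq2.2} built from the kernel $\widehat{K}$, the weight $\widetilde{u}$ (in the slot of $v$) and the measure $\widetilde{\mu}$; moreover $\|\mathbb{K}^{+}f\|_{L_q(I)}$ equals the $L_q$-norm of the transformed output and $\|f\|_{p,\mu}=\|\widetilde{f}\|_{p,\widetilde{\mu}}$, so the corresponding operator norms coincide. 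One checks that $\widetilde{u}$ is positive and in $L_q^{loc}(I)$ and that $\widetilde{\mu}\in\Psi$, so the hypotheses of Theorems~\ref{th3.2} and \ref{2.2} are met.

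The central point is that $K\equiv K_n\in\mathcal{O}_n^{+}$ implies $\widehat{K}\equiv\widehat{K}_n\in\mathcal{O}_n^{-}$. Since $K_n$ is non-decreasing in the first argument, $\widehat{K}_n(\sigma,\xi)=K_n(1/\xi,1/\sigma)$ is non-increasing in its second argument $\xi$. Writing \eqref{eq3.1} at the points $1/\xi\geq1/\tau\geq1/\sigma$, that is, at $\sigma\geq\tau\geq\xi$, and setting $\widehat{K}_{i,n}(\tau,\xi)=K_{n,i}(1/\xi,1/\tau)$ and $\widehat{K}_i(\sigma,\tau)=K_i(1/\tau,1/\sigma)$, one obtains relation \eqref{1.5} for $\widehat{K}_n$; the same monotonicity argument together with an induction on $i$ shows $K_i\in\mathcal{O}_i^{+}$ becomes $\widehat{K}_i\in\mathcal{O}_i^{-}$, while the $K_{n,i}$ of \eqref{1.4} play the role of the $K_{i,n}$ of \eqref{1.5}. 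The normalisations of Remark~\ref{r2.0} survive the reflection.

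It then remains to apply Theorem~\ref{th3.2} (resp.\ Theorem~\ref{2.2} when $n=1$) to the transformed operator and to rewrite the resulting quantities $B^{-}_{i,n}(\widehat{K},\widetilde{u},\widetilde{\mu})$ in the variables $x,s$. In each of them the substitution $\xi=1/s$ produces a Jacobian $s^{-2}\,ds$ that is cancelled by the factor $s^{2}$ coming from $\widetilde{u}^{\,q}(1/s)=u^{q}(s)s^{2}$, so $\int_{0}^{\zeta}\widehat{K}_{i,n}^{\,q}(\zeta,\xi)\widetilde{u}^{\,q}(\xi)\,d\xi$ becomes $\int_{z}^{\infty}K_{n,i}^{q}(s,z)u^{q}(s)\,ds$ with $z=1/\zeta$; the substitution $\sigma=1/x$ with $\int g\,d\widetilde{\mu}=\int g(1/x)\,d\mu(x)$ turns $\int_{[\tau,\infty]}\widehat{K}_i^{\,p'}(\sigma,\tau)\,d\widetilde{\mu}(\sigma)$ into $\int_{[0,z]}K_i^{p'}(z,x)\,d\mu(x)$; and the outer Stieltjes integral $d(-\Phi(\zeta))$ in $\zeta$, with $\Phi$ the function just described (non-increasing in $\zeta$), becomes $d\big(\int_{[0,z]}K_i^{p'}(z,x)\,d\mu(x)\big)$ in $z$. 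This produces precisely $\hat{B}^{+}_{n,i}$, and from $B^{-}_n=\max_i B^{-}_{i,n}\approx\|\cdot\|$ one gets $\|\mathbb{K}^{+}\|_{p\to q}\approx\hat{B}^{+}_n$. The alternative ``$\approx$'' form of $\hat{B}^{+}_{n,i}$ follows from the corresponding form in Theorem~\ref{th3.2}; for $n=1$ it is obtained as at the end of the proof of Theorem~\ref{2.1}, by representing the non-increasing function $z\mapsto\int_{z}^{\infty}K_{n,i}^{q}(s,z)u^{q}(s)\,ds$ through a Borel measure (Remark~\ref{r2.2}), applying Lemma~2 from \cite{P2}, and using Fubini's theorem.

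The main obstacle is not a single estimate but the bookkeeping: one must verify with care that the multi-term decomposition \eqref{1.4} transforms into \eqref{1.5} with the correct component kernels and the correct lower classes, and that every Jacobian, every measure pushforward and every endpoint term in the translation of the constants $B^{-}_{i,n}$ matches $\hat{B}^{+}_{n,i}$ exactly. As an alternative to the change of variables, the statement can be proved by repeating verbatim the argument of Theorem~\ref{th3.1}, with $F_n$ replaced by $G_n(x)=\int_{[0,x]}K_n(x,s)f(s)\,d\mu(s)$, the dyadic construction of \cite{O3} applied to $G_n$, the splitting \eqref{eq3.6} into $J_{n,n}+J_{n,0}+\sum_{\gamma=1}^{n-1}J_{n,\gamma}$, and the induction on $\gamma$ now based on Theorem~\ref{2.2}.
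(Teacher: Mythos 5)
Your argument is correct, but it reaches Theorem \ref{th3.6} by a different route than the paper. The paper disposes of Theorems \ref{th3.5}--\ref{th3.8} with the single remark ``proceeding as above'', i.e.\ it intends one to rerun the whole machinery of Section 3 (dyadic decomposition of $G_n(x)=\int_{[0,x]}K_n(x,s)f(s)\,d\mu(s)$, the splitting analogous to \eqref{eq3.6}, duality, induction) for the mirrored operators \eqref{eq4.1}--\eqref{eq4.2}. You instead reduce Theorem \ref{th3.6} to the already stated Theorem \ref{th3.2} (and Theorem \ref{2.2} for $n=1$) by the inversion $t\mapsto 1/t$, checking that \eqref{eq4.2} becomes an operator of the form \eqref{eq2.2} with weight $\widetilde u(\xi)=u(1/\xi)\xi^{-2/q}$ and pushforward measure $\widetilde\mu$, that the norms are preserved, and that reflection carries $\mathcal{O}_n^{+}$ (relation \eqref{1.4}/\eqref{eq3.1}) into $\mathcal{O}_n^{-}$ (relation \eqref{1.5}) with $\widehat K_{i,n}(\tau,\xi)=K_{n,i}(1/\xi,1/\tau)$, $\widehat K_i(\sigma,\tau)=K_i(1/\tau,1/\sigma)$; your bookkeeping of the Jacobian against $\widetilde u^{\,q}$, of the pushforward in the inner $\mu$-integral, and of the outer Stieltjes measure $d(-\Phi(\zeta))\mapsto d\bigl(\int_{[0,z]}K_i^{p'}(z,x)\,d\mu(x)\bigr)$ does reproduce $\hat B^{+}_{n,i}$ exactly, including the endpoint conventions $(0,\infty]\leftrightarrow[0,\infty)$ and both ``$\approx$'' forms. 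What your reduction buys is economy: no new estimates are proved, only a symmetry is verified; what the paper's (implicit) rerun buys is independence from Theorem \ref{th3.2}, which itself is only asserted ``similarly''. One small caveat about your secondary alternative (repeating the proof of Theorem \ref{th3.1} verbatim for $G_n$): the base case of that induction produces $J_{n,1}=\int\bigl(\int_{[0,t]}K_1(t,s)f(s)\,d\mu(s)\bigr)^q\,d\eta_{n,1}(t)$ with a discrete output measure $\eta_{n,1}$, so it is not literally covered by Theorem \ref{2.2} (whose target is weighted Lebesgue measure); one needs the two-measure analogue of Theorem \ref{2.1} with $v(s)\,ds$ replaced by $d\mu(s)$, which is again most quickly obtained by the same inversion. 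Since that remark is only an aside and your main argument is complete, this does not affect the validity of the proposal.
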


\begin{theorem}\label{th3.7} Let $1<q<p<\infty$, $\mu\in\Psi$ and $K(\cdot,\cdot)\equiv K_n(\cdot, \cdot)\in \mathcal{O}_n^{+}$, $n\geq1$. Then operator \eqref{eq4.1} is bounded from $L_{p}(I)$ to $L_{q,\mu}(I)$ if and only if $\tilde{B}^-_n=\max\limits_{0\leq i\leq n}\tilde{B}^-_{n,i}<\infty$. Moreover, $\|\mathbb{K}^-\|_{p\rightarrow q}\approx \tilde{B}_n^-$, where $\|\mathbb{K}^-\|_{p\rightarrow q}$ is the norm of operator \eqref{eq4.1} from $L_{p}(I)$ to $L_{q,\mu}(I)$ and
\begin{multline*}
\tilde{B}^-_{n, i}=\left(\int\limits_{[0, \infty]}\left(\int\limits_z^\infty K_{n, i}^{p'}(s, z) u^{p'}(s)ds\right)^{\frac{q(p-1)}{p-q}}\left(\int\limits_{[0, z]} K_i^{q}(z, x)d\mu(x)\right)^{\frac{q}{p-q}}\right.\\
\left.\times d\left(\int\limits_{[0, z]} K_i^{q}(z, x)d\mu(x)\right)\right)^{\frac{p-q}{pq}}
\end{multline*}
\begin{multline*}
\approx\left(\int\limits_{[0, \infty)}\left(\int\limits_{[0, z]} K_i^{q}(z, x)d\mu(x)\right)^{\frac{p}{p-q}}\left(\int\limits_z^\infty K_{n, i}^{p'}(s, z) u^{p'}(s)ds\right)^{\frac{p(q-1)}{p-q}}\right.\\
\left.\times d\left(-\int\limits_z^\infty K_{n, i}^{p'}(s, z) u^{p'}(s)ds\right)\right)^{\frac{p-q}{pq}},~i=0,1,...,n.
\end{multline*}

\end{theorem}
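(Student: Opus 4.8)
The plan is to deduce Theorem~\ref{th3.7} from Theorem~\ref{th3.6} by duality, exactly as Theorem~\ref{th3.4} was deduced from Theorem~\ref{th3.1}. First I would note that operator \eqref{eq4.2} from $L_{q',\mu}(I)$ to $L_{p'}(I)$ is dual to operator \eqref{eq4.1} from $L_p(I)$ to $L_{q,\mu}(I)$: for non-negative $g\in L_p(I)$ and $\varphi\in L_{q',\mu}(I)$ with compact supports, Fubini's theorem gives
$$
\int\limits_0^\infty \mathbb{K^-}g(s)\,\varphi(s)\,d\mu(s)=\int\limits_0^\infty u(x)g(x)\left(\int\limits_{[0,x]}K_n(x,s)\varphi(s)\,d\mu(s)\right)dx=\int\limits_0^\infty g(x)\,\mathbb{K^+}\varphi(x)\,dx .
$$
Hence operator \eqref{eq4.1} is bounded from $L_p(I)$ to $L_{q,\mu}(I)$ if and only if operator \eqref{eq4.2} is bounded from $L_{q',\mu}(I)$ to $L_{p'}(I)$, and the two operator norms coincide.

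Next I would invoke Theorem~\ref{th3.6} with the parameters $p,q$ replaced by $q',p'$; this is admissible because $1<q<p<\infty$ forces $1<p'<q'<\infty$, the measure $\mu\in\Psi$ is unchanged, and the kernel $K_n(\cdot,\cdot)$ still belongs to $\mathcal{O}_n^{+}$. Theorem~\ref{th3.6} then asserts that operator \eqref{eq4.2} is bounded from $L_{q',\mu}(I)$ to $L_{p'}(I)$ if and only if $\max\limits_{0\le i\le n}\hat{B}^+_{n,i}<\infty$, where each $\hat{B}^+_{n,i}$ is the quantity of Theorem~\ref{th3.6} formed with $(q',p')$ in place of $(p,q)$, and that the norm of \eqref{eq4.2} is equivalent to this maximum. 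It then remains only to identify this quantity in terms of the original parameters. Under the substitution $p\mapsto q'$, $q\mapsto p'$ one has $p'\mapsto q$, $q'\mapsto p$, $\frac{p}{p-q}\mapsto\frac{q'}{q'-p'}=\frac{q(p-1)}{p-q}$, $\frac{p(q-1)}{p-q}\mapsto\frac{q'(p'-1)}{q'-p'}=\frac{q}{p-q}$, the outer exponent $\frac{p-q}{pq}$ is invariant, and $K_{n,i}^q u^q\mapsto K_{n,i}^{p'}u^{p'}$, $K_i^{p'}\mapsto K_i^{q}$; thus $\hat{B}^+_{n,i}$ of Theorem~\ref{th3.6} becomes precisely the first displayed formula for $\tilde{B}^-_{n,i}$ in the statement, and the companion ($\approx$) expression of Theorem~\ref{th3.6} becomes the second. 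Together with the norm identity of the previous paragraph this yields $\|\mathbb{K^-}\|_{p\rightarrow q}\approx\tilde{B}^-_n=\max\limits_{0\le i\le n}\tilde{B}^-_{n,i}$.

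The equivalence of the two displayed formulas for $\tilde{B}^-_{n,i}$ is thereby inherited from Theorem~\ref{th3.6}; for completeness (cf. Remark~\ref{r3.0}) one may also prove it directly by repeating the computation behind \eqref{eq3.2}: by Remark~\ref{r2.0} the function $z\mapsto\int_z^\infty K_{n,i}^{p'}(s,z)u^{p'}(s)\,ds$ is non-increasing on $I$, so by Remark~\ref{r2.2} it is represented by a Borel measure, Lemma~2 from \cite{P2} rewrites its power $\frac{q(p-1)}{p-q}$ as an iterated integral, and Fubini's theorem interchanges the two inner integrations. The whole argument is purely formal once Theorem~\ref{th3.6} is in hand; the only points requiring attention are the bookkeeping of the conjugate exponents under $(p,q)\to(q',p')$ and the observation that the measure $\mu$ occupies the ``input'' slot of operator \eqref{eq4.2} and the ``output'' slot of operator \eqref{eq4.1}, which is exactly what the adjointness identity above records. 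No estimate beyond Theorem~\ref{th3.6} is needed.
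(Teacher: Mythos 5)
Your proposal is correct and follows essentially the paper's own route: the paper obtains Theorems \ref{th3.5}--\ref{th3.8} "proceeding as above," i.e., Theorem \ref{th3.7} is the analogue of Theorems \ref{th3.3}/\ref{th3.4}, deduced from Theorem \ref{th3.6} by the mutual duality of operators \eqref{eq4.1} and \eqref{eq4.2} with the parameter exchange $(p,q)\to(q',p')$. Your bookkeeping of the conjugate exponents ($\tfrac{p}{p-q}\mapsto\tfrac{q(p-1)}{p-q}$, $\tfrac{p(q-1)}{p-q}\mapsto\tfrac{q}{p-q}$, outer exponent invariant) and the identification of the resulting quantities with $\tilde{B}^-_{n,i}$ are accurate, so nothing further is needed.
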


\begin{theorem}\label{th3.8} Let $1<q<p<\infty$,  $\mu\in\Psi$ and $K(\cdot,\cdot)\equiv K_n(\cdot, \cdot)\in \mathcal{O}_n^{-}$, $n>1$. Then operator \eqref{eq4.2} is bounded from $L_{p,\mu}(I)$ to $L_{q}(I)$ if and only if $\tilde{B}^+_n=\max\limits_{0\leq i\leq n}\tilde{B}^+_{n,i}<\infty$. Moreover, $\|\mathbb{K}^+\|_{p\rightarrow q}\approx \tilde{B}_n^+$, where $\|\mathbb{K}^+\|_{p\rightarrow q}$ is the norm of operator \eqref{eq4.2} from $L_{p,\mu}(I)$ to $L_{q}(I)$ and
\begin{multline*}
\tilde{B}^+_{i, n}=\left(\int\limits_{[0, \infty)}\left(\int\limits_{[0,z]} K_{i, n}^{p'}(z, x) d\mu(x)\right)^{\frac{q(p-1)}{p-q}}\left(\int\limits_z^\infty K_i^{q}(s, z)u^{q}(s)ds\right)^{\frac{q}{p-q}}\right.\\
\left.\times d\left(-\int\limits_z^\infty K^{q}_i (t, z) u^{q}(t)dt\right)\right)^{\frac{p-q}{pq}}
\end{multline*}
\begin{multline*}
\approx\left(\int\limits_{[0, \infty]}\left(\int\limits_z^\infty K_i^{q}(s, z)u^{q}(s)ds\right)^{\frac{p}{p-q}}\left(\int\limits_{[0,z]} K_{i, n}^{p'}(z, x) d\mu(x)\right)^{\frac{p(q-1)}{p-q}}\right.\\
\left.\times d\left(\int\limits_{[0,z]} K_{i, n}^{p'}(z, x) d\mu(x)\right)\right)^{\frac{p-q}{pq}},~i=1,2,...,n-1.
\end{multline*}

\end{theorem}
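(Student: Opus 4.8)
The plan is to deduce Theorem~\ref{th3.8} from Theorem~\ref{th3.4} by the inversion $x\mapsto 1/x$ of the half-line, which swaps neighbourhoods of $0$ and of $\infty$ and exchanges the classes $\mathcal{O}_n^{-}$ and $\mathcal{O}_n^{+}$; this is what ``proceeding as above'' amounts to. Given $K\equiv K_n\in\mathcal{O}_n^{-}$, I set $\widehat K(a,b):=K(1/b,1/a)$ for $a\geq b>0$, $\widehat v(b):=b^{-2/q}u(1/b)$, and let $\widehat\mu$ be the push-forward of $\mu$ under $t\mapsto 1/t$. Substituting $s=1/\sigma$ in the outer integral and $x=1/\xi$ in the inner integral of \eqref{eq4.2} gives, with $\widehat f(\sigma):=f(1/\sigma)$,
$$
\int\limits_0^\infty\bigl(\mathbb{K}^{+}f(x)\bigr)^q\,dx=\int\limits_0^\infty\bigl(\mathbf{K}^{-}\widehat f(b)\bigr)^q\,db,\qquad \|f\|_{p,\mu}=\|\widehat f\|_{p,\widehat\mu},
$$
where on the right $\mathbf{K}^{-}$ is operator \eqref{eq2.2} built from the data $\widehat K,\widehat v,\widehat\mu$. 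Thus $\mathbb{K}^{+}\colon L_{p,\mu}(I)\to L_q(I)$ is bounded iff this $\mathbf{K}^{-}\colon L_{p,\widehat\mu}(I)\to L_q(I)$ is, with the same norm.

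The next step is to verify that $\widehat K\in\mathcal{O}_n^{+}$, with component functions $\widehat K_i(a,b):=K_i(1/b,1/a)$ and $\widehat K_{n,i}(a,b):=K_{i,n}(1/b,1/a)$, $i=0,\dots,n-1$. Monotonicity of $K$ in its second variable becomes monotonicity of $\widehat K$ in its first; the base case $\mathcal{O}_0^{-}\to\mathcal{O}_0^{+}$ is immediate (a function of the first variable becomes a function of the second); and substituting $x=1/\sigma$, $t=1/\tau$, $s=1/\xi$ — so that $x\geq t\geq s$ becomes $\xi\geq\tau\geq\sigma$ — turns the two-sided estimate \eqref{1.5} for $K$ into \eqref{1.4} for $\widehat K$ with exactly these functions, whence, inductively, $\widehat K_i\in\mathcal{O}_i^{+}$. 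The monotonicity of the $\widehat K_{n,i}$ and the normalization $\widehat K_0\equiv1$ (Remark~\ref{r2.0}) carry over from those of the $K_{i,n}$.

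Applying Theorem~\ref{th3.4} to $\mathbf{K}^{-}$ with data $\widehat K,\widehat v,\widehat\mu$ yields $\|\mathbf{K}^{-}\|_{p\to q}\approx\max_{0\le i\le n}\bar B^-_{n,i}$, the $\bar B^-_{n,i}$ being formed from $\widehat K_{n,i},\widehat K_i,\widehat v,\widehat\mu$. It then remains to substitute $z=1/\zeta$, $x=1/\chi$, $s=1/\varsigma$ back into each $\bar B^-_{n,i}$: using $\widehat v^{\,q}(s)\,ds=u^{q}(1/s)s^{-2}\,ds$, $\widehat K_i^{\,q}(z,s)=K_i^{\,q}(1/s,1/z)$, $\widehat K_{n,i}^{\,p'}(x,z)=K_{i,n}^{\,p'}(1/z,1/x)$ and the push-forward identity for $\widehat\mu$, the substitution turns $\int_0^z\mapsto\int_z^\infty$ and $\int_{[z,\infty]}d\widehat\mu\mapsto\int_{[0,z]}d\mu$, reverses the monotone differential (a non-decreasing function of $z$ becomes a non-increasing function of $1/z$, so $d(\int_0^z\cdots)$ turns into $d(-\int_z^\infty\cdots)$; cf.\ Remarks~\ref{r2.1}, \ref{r2.2}), and reproduces precisely the quantities $\tilde B^+_{i,n}$ in the statement. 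The same substitution applied to the second display of Theorem~\ref{th3.4} gives the ``$\approx$'' form, and the extreme indices $i=0,n$ are handled identically from $\bar B^-_{n,0},\bar B^-_{n,n}$.

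I expect the only genuine difficulty to be organizational: tracking which of the two nested variables is inverted at each stage, pinning down the Jacobian weight $b^{-2/q}$ and the half-open endpoint conventions ($[0,z]$ versus $(0,z]$, according to whether $\mu$ charges $\{0\}$), and checking that the relabelling $K_{i,n}\leftrightarrow\widehat K_{n,i}$, $K_i\leftrightarrow\widehat K_i$ stays globally consistent across \eqref{1.4}--\eqref{1.5} and in the definition of $\tilde B^+_{i,n}$. A self-contained alternative is to re-run the discretization/H\"older/induction-on-$n$ scheme of Theorem~\ref{th3.1} directly for \eqref{eq4.2} — discretizing $\int_x^\infty K_n(\cdot,s)u(\cdot)f(s)\,ds$ in place of $F_n$ — but this is longer and yields the same conclusion.
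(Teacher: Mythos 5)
Your argument is correct, but it is a genuinely different route from the paper's. The paper disposes of Theorem~\ref{th3.8} (and of Theorems~\ref{th3.5}--\ref{th3.7}) with the phrase ``proceeding as above'': one is expected to re-run, mutatis mutandis, the block--discretization/H\"older/induction proof of Theorem~\ref{th3.1} and the duality step used for Theorems~\ref{th3.3}--\ref{th3.4}, now for the operators \eqref{eq4.1}--\eqref{eq4.2} in which the measure sits on the input side and the integration runs over $[0,x]$ or $(s,\infty)$ --- i.e.\ the ``self-contained alternative'' you mention at the end. You instead obtain Theorem~\ref{th3.8} as a corollary of Theorem~\ref{th3.4} by the inversion $t\mapsto 1/t$: the unitary-type change of variables with $\widehat K(a,b)=K(1/b,1/a)$, $\widehat v(b)=b^{-2/q}u(1/b)$ and the push-forward measure $\widehat\mu$ does turn \eqref{eq4.2} into \eqref{eq2.2} with equal norms, the substitution $x\ge t\ge s\mapsto \xi\ge\tau\ge\sigma$ carries \eqref{1.5} into \eqref{1.4} with $\widehat K_i(c,b)=K_i(1/b,1/c)$, $\widehat K_{n,i}(a,c)=K_{i,n}(1/c,1/a)$, $\widehat K_{n,n}\equiv1$, so $\mathcal{O}_n^-$ maps onto $\mathcal{O}_n^+$ with the monotonicity of Remark~\ref{r2.0} preserved, and the back-substitution in $\bar B^-_{n,i}$ (Stieltjes measures transforming as image measures, $d(\int_0^z\cdots)\mapsto d(-\int_z^\infty\cdots)$, $(0,\infty]\mapsto[0,\infty)$) reproduces exactly $\tilde B^+_{i,n}$ and its ``$\approx$'' companion; I checked the Jacobian and the kernel bookkeeping and they are consistent. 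What your transference buys is economy and a structural explanation of why Theorems~\ref{th3.5}--\ref{th3.8} mirror Theorems~\ref{th3.1}--\ref{th3.4} with $\mathcal{O}_n^+\leftrightarrow\mathcal{O}_n^-$ and $K_{n,i}\leftrightarrow K_{i,n}$ swapped; what the paper's repetition of the direct scheme buys is independence from the specific symmetry of the half-line (no Jacobian or endpoint bookkeeping, and the same template works verbatim in settings where no such inversion is available). Only stylistic caveats remain: state explicitly that $\widehat v$ inherits the local summability hypotheses from $u$, and that the extreme cases $i=0,n$ come from $\bar B^-_{n,0},\bar B^-_{n,n}$ exactly as you say.
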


Let the measure $\mu\in \Psi$ be absolutely continuous with respect to the Lebesgue measure with density  $u^q$, where $u$ is a non-negative function such that $u\in L_q^{log}(I)$. Then
$\int\limits_0^\infty\left(\mathbf{K^+}f(x)\right)^qd\mu(x)=\int\limits_0^\infty \left(\mathcal{K^+}f(x)\right)^qdx$, $f\geq0$,
and the boundedness  of operator \eqref{eq2.1}  from  $L_{p}(I)$ to $L_{q,\mu}(I)$ coincides with the boundedness  of operator \eqref{1.1} from  $L_{p}(I)$ to $L_{q}(I)$ with the norm $\|\mathbf{K^+}\|_{p\to q}=\|\mathcal{K^+}\|_{p\to q}$. Therefore, from Theorems  \ref{th3.1} and \ref{th3.3} we respectively deduce the following statements.

\begin{theorem}\label{th3.9} Let $1<q<p<\infty$  and $K(\cdot,\cdot)\equiv K_n(\cdot, \cdot)\in \mathcal{O}_n^{+}$, $n\geq1$. Then operator \eqref{1.1} is bounded from $L_{p}(I)$ to $L_{q}(I)$ if and only if $\mathcal{B}^+_n=\max\limits_{0\leq i\leq n}\mathcal{B}^+_{n,i}<\infty$. Moreover, $\|\mathcal{K}^+\|_{p\rightarrow q}\approx \mathcal{B}_n^+$, where $\|\mathcal{K}^+\|_{p\rightarrow q}$ is the norm of operator \eqref{1.1}  from $L_{p}(I)$ to $L_{q}(I)$  and
\begin{multline*}
\mathcal{B}^+_{n, i}=\left(\int\limits_{(0, \infty]}\left(\int\limits_z^\infty K_{n, i}^q(x, z)u^q(x)dx\right)^{\frac{p}{p-q}}\left(\int\limits_0^z K_i^{p'}(z, s)v^{p'}(s)ds\right)^{\frac{p(q-1)}{p-q}}\right.\\
\left.\times d\left(\int\limits_0^z K^{p'}_i (z, t) v^{p'}(t)dt\right)\right)^{\frac{p-q}{pq}}
\end{multline*}
\begin{multline*}
\approx\left(\int\limits_{[0, \infty)}\left(\int\limits_0^z K_i^{p'}(z, s)v^{p'}(s)ds\right)^{\frac{q(p-1)}{p-q}}\left(\int\limits_z^\infty K_{n, i}^q(x, z)u^q(x)dx\right)^{\frac{q}{p-q}}\right.\\
\left.\times d\left(-\int\limits_z^\infty K_{n, i}^q(x, z)u^q(x)dx\right)\right)^{\frac{p-q}{pq}},~i=0,1,...,n.
\end{multline*}

\end{theorem}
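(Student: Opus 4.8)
The plan is to derive this statement directly from Theorem~\ref{th3.1} by choosing the measure $\mu$ suitably, exactly as announced in the paragraph preceding the theorem. First I would put $d\mu(x)=u^q(x)\,dx$; since $u\ge 0$ and $u\in L_q^{loc}(I)$, this defines a non-negative Borel measure on $I$, so $\mu\in\Psi$ and Theorem~\ref{th3.1} applies. Then I would record the pointwise identity $\mathcal{K}^+f(x)=u(x)\,\mathbf{K^+}f(x)$, which is immediate because the kernel appearing in \eqref{1.1} is $u(x)K(x,s)v(s)$ whereas the kernel in \eqref{eq2.1} is $K(x,s)v(s)$ and $u(x)$ does not depend on $s$. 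Raising to the power $q$ and integrating in $x$ gives, for $0\le f\in L_p(I)$,
$$
\int\limits_0^\infty\bigl(\mathcal{K}^+f(x)\bigr)^q\,dx=\int\limits_0^\infty\bigl(\mathbf{K^+}f(x)\bigr)^q u^q(x)\,dx=\int\limits_{[0,\infty]}\bigl(\mathbf{K^+}f(x)\bigr)^q\,d\mu(x),
$$
so that (using that the kernels are non-negative, hence it suffices to test on $f\ge 0$) operator \eqref{1.1} is bounded from $L_p(I)$ to $L_q(I)$ if and only if operator \eqref{eq2.1} is bounded from $L_p(I)$ to $L_{q,\mu}(I)$, and moreover $\|\mathcal{K}^+\|_{p\to q}=\|\mathbf{K^+}\|_{p\to q}$.

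Next I would invoke Theorem~\ref{th3.1} for operator \eqref{eq2.1} with this $\mu$: it is bounded if and only if $B^+_n=\max_{0\le i\le n}B^+_{n,i}<\infty$, with $\|\mathbf{K^+}\|_{p\to q}\approx B^+_n$, and each $B^+_{n,i}$ admits the two equivalent forms displayed in \eqref{eq3.2}. The remaining work is the substitution $d\mu(x)=u^q(x)\,dx$ into those expressions. Using
$$
\int\limits_{[z,\infty]}K_{n,i}^q(x,z)\,d\mu(x)=\int\limits_z^\infty K_{n,i}^q(x,z)\,u^q(x)\,dx,
$$
I would verify that $B^+_{n,i}$ turns verbatim into $\mathcal{B}^+_{n,i}$ and that \eqref{eq3.2} turns into the second displayed expression for $\mathcal{B}^+_{n,i}$; since $\mu$ is absolutely continuous, every singleton has $\mu$-measure zero, which is why the outer integrals may be written over $(0,\infty]$, $[0,\infty)$ or $[0,\infty]$ interchangeably.

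There is essentially no genuine obstacle in this argument, as all the analysis has already been carried out in Theorem~\ref{th3.1}; the only points requiring a moment's attention are (i) to confirm that the correspondence $\mathcal{K}^+\leftrightarrow\mathbf{K^+}$ is an exact identification of the two boundedness problems, so that the \emph{value} of the best constant (not merely its finiteness) is transferred and the equivalence $\|\mathcal{K}^+\|_{p\to q}\approx\mathcal{B}^+_n$ follows, and (ii) to perform the measure-to-weight substitution consistently in each occurrence of $d\mu$. Once these routine verifications are in place, the statement follows immediately from Theorem~\ref{th3.1}.
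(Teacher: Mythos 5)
Your proposal is correct and coincides with the paper's own derivation: the authors obtain Theorem~\ref{th3.9} precisely by taking $d\mu(x)=u^q(x)\,dx$ in Theorem~\ref{th3.1}, noting $\int_0^\infty(\mathbf{K^+}f)^q\,d\mu=\int_0^\infty(\mathcal{K}^+f)^q\,dx$ for $f\ge 0$ so that the two boundedness problems and norms are identified, and then substituting the weight into $B^+_{n,i}$ and \eqref{eq3.2}. Nothing further is needed.
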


\begin{theorem}\label{th3.10} Let $1<q<p<\infty$ and $K(\cdot,\cdot)\equiv K_n(\cdot, \cdot)\in \mathcal{O}_n^{-}$, $n\geq1$. Then operator \eqref{1.1} is bounded from $L_{p}(I)$ to $L_{q}(I)$ if and only if $\mathbb{B}^+_n=\max\limits_{0\leq i\leq n}\mathbb{B}^+_{i,n}<\infty$. Moreover, $\|\mathcal{K}^+\|_{p\rightarrow q}\approx \mathbb{B}_n^+$, where $\|\mathcal{K}^+\|_{p\rightarrow q}$ is the norm of operator \eqref{1.1} from $L_{p}(I)$ to $L_{q}(I)$ and
\begin{multline*}
\mathbb{B}^+_{i, n}=\left(\int\limits_{[0, \infty)}\left(\int\limits_0^z K_{i, n}^{p'}(z, s) v^{p'}(s)ds\right)^{\frac{q(p-1)}{p-q}}\left(\int\limits_z^\infty K_i^{q}(x, z)u^q(x)dx\right)^{\frac{q}{p-q}}\right.\\
\left.\times d\left(-\int\limits_z^\infty K_i^{q}(x, z)u^q(x)dx\right)\right)^{\frac{p-q}{pq}}
\end{multline*}
\begin{multline*}
\approx\left(\int\limits_{(0, \infty]}\left(\int\limits_z^\infty K_i^{q}(x, z)u^q(x)dx\right)^{\frac{p}{p-q}}\left(\int\limits_0^z K_{i, n}^{p'}(z, s) v^{p'}(s)ds\right)^{\frac{p(q-1)}{p-q}}\right.\\
\left.\times d\left(\int\limits_0^z K_{i, n}^{p'}(z, s) v^{p'}(s)ds\right)\right)^{\frac{p-q}{pq}},~i=0,2,...,n.
\end{multline*}

\end{theorem}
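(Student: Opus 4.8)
The plan is to obtain Theorem~\ref{th3.10} as a direct specialization of Theorem~\ref{th3.3}, exactly as announced in the sentence preceding its statement. First I would take the measure $\mu$ to be absolutely continuous with respect to Lebesgue measure with density $u^q$, i.e. $d\mu(x)=u^q(x)\,dx$. Since $u\geq 0$ and $u\in L_q^{loc}(I)$, the function $u^q$ is non-negative and locally summable, so this defines a non-negative Borel measure on $I$; hence $\mu\in\Psi$ and Theorem~\ref{th3.3} is applicable to it, with the same kernel $K(\cdot,\cdot)\equiv K_n(\cdot,\cdot)\in\mathcal{O}_n^{-}$ and the same decomposition functions $K_i(\cdot,\cdot)$ and $K_{i,n}(\cdot,\cdot)$.

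Second, I would record the pointwise identity $\mathcal{K}^{+}f(x)=u(x)\,\mathbf{K}^{+}f(x)$, which is immediate from comparing \eqref{1.1} and \eqref{eq2.1}. Raising to the power $q$ and integrating over $I$ gives, for every $0\leq f$,
\[
\int_0^\infty\bigl(\mathcal{K}^{+}f(x)\bigr)^q\,dx=\int_0^\infty u^q(x)\bigl(\mathbf{K}^{+}f(x)\bigr)^q\,dx=\int_{[0,\infty]}\bigl(\mathbf{K}^{+}f(x)\bigr)^q\,d\mu(x),
\]
so that $\|\mathcal{K}^{+}f\|_{L_q(I)}=\|\mathbf{K}^{+}f\|_{L_{q,\mu}(I)}$. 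Consequently, operator \eqref{1.1} is bounded from $L_p(I)$ to $L_q(I)$ if and only if operator \eqref{eq2.1} is bounded from $L_p(I)$ to $L_{q,\mu}(I)$, and the two operator norms coincide: $\|\mathcal{K}^{+}\|_{p\rightarrow q}=\|\mathbf{K}^{+}\|_{p\rightarrow q}$. By Theorem~\ref{th3.3}, the latter boundedness holds precisely when $\bar B^{+}_n=\max_{0\leq i\leq n}\bar B^{+}_{i,n}<\infty$, with $\|\mathbf{K}^{+}\|_{p\rightarrow q}\approx\bar B^{+}_n$.

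Third, I would transcribe the constants. In the two equivalent expressions for $\bar B^{+}_{i,n}$ from Theorem~\ref{th3.3} the measure $\mu$ enters only through the quantity $\int_{[z,\infty]}K_i^{q}(x,z)\,d\mu(x)$, which, under $d\mu(x)=u^q(x)\,dx$, equals $\int_z^\infty K_i^{q}(x,z)u^q(x)\,dx$; because this $\mu$ is non-atomic, the endpoint conventions $[0,\infty]$, $(0,\infty]$, $[0,\infty)$ in the outer integrals are interchangeable. With this substitution $\bar B^{+}_{i,n}$ becomes exactly $\mathbb{B}^{+}_{i,n}$ in both of its forms, and $\bar B^{+}_n$ becomes $\mathbb{B}^{+}_n=\max_{0\leq i\leq n}\mathbb{B}^{+}_{i,n}$, which gives the stated criterion together with $\|\mathcal{K}^{+}\|_{p\rightarrow q}\approx\mathbb{B}^{+}_n$.

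\textbf{Main obstacle.} There is no substantive obstacle here: the entire analytic content (the dyadic decomposition, the Maz'ya--Persson type one-dimensional characterizations, the induction over the order $n$, and the measure representations of monotone functions) is already carried out in Theorem~\ref{th3.3} for an arbitrary $\mu\in\Psi$. The only points needing a line of justification are that $d\mu=u^q\,dx$ indeed lies in $\Psi$ (local summability of $u^q$) and that, $\mu$ being absolutely continuous, the various half-open and closed domains of integration in the defining integrals coincide, so the formulas of Theorem~\ref{th3.3} pass over verbatim into those of Theorem~\ref{th3.10}.
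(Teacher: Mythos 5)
Your proposal is correct and is exactly the paper's own route: the paper deduces Theorem~\ref{th3.10} from Theorem~\ref{th3.3} by taking $\mu$ absolutely continuous with density $u^q$, noting $\|\mathcal{K}^{+}f\|_{q}=\|\mathbf{K}^{+}f\|_{q,\mu}$ and substituting $\int_{[z,\infty]}K_i^{q}(x,z)\,d\mu(x)=\int_z^\infty K_i^{q}(x,z)u^{q}(x)\,dx$ in the constants. Your added remarks on $\mu\in\Psi$ and the interchangeability of the endpoint conventions for a non-atomic $\mu$ are fine and only make explicit what the paper leaves implicit.
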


Now, we establish a criterion for the boundedness of operator \eqref{1.2} from $L_{p}(I)$ to $L_{q}(I)$ when its kernel $K(\cdot, \cdot)\equiv K_n(\cdot, \cdot)\in\mathcal{O}_n^+$.  Using mutual duality of operators \eqref{1.1} and \eqref{1.2}, from Theorems \ref{th3.9} and \ref{th3.10} we respectively obtain the following theorems.

\begin{theorem}\label{th3.11} Let $1<q<p<\infty$ and $K(\cdot,\cdot)\equiv K_n(\cdot, \cdot)\in \mathcal{O}_n^{+}$, $n\geq 1$. Then operator \eqref{1.2} is bounded from $L_{p}(I)$ to $L_{q}(I)$ if and only if $\mathcal{B}^-_n=\max\limits_{0\leq i\leq n} \mathcal{B}^-_{n, i}<\infty$. Moreover, $\|\mathcal{K}^-\|_{p\rightarrow q}\approx \mathcal{B}^-_n$, where $\|\mathcal{K}^-\|_{p\rightarrow q}$ is the norm of operator \eqref{1.2}  from $L_{p}(I)$ to $L_{q}(I)$ and
\begin{multline*}
\mathcal{B}^-_{n, i}=\left(\int\limits_{(0, \infty]}\left(\int\limits_z^\infty K_{n, i}^{p'}(x, z) u^{p'}(x)dx\right)^{\frac{q(p-1)}{p-q}}\left(\int\limits_0^z K_i^{q}(z, s)v^{q}(s)ds\right)^{\frac{q}{p-q}}\right.\\
\left.\times d\left(\int\limits_0^z K^{q}_i (z, t) v^{q}(t)dt\right)\right)^{\frac{p-q}{pq}}
\end{multline*}
\begin{multline*}
\approx\left(\int\limits_{[0, \infty)}\left(\int\limits_0^z K_i^{q}(z, s)v^{q}(s)ds\right)^{\frac{p}{p-q}}\left(\int\limits_z^\infty K_{n, i}^{p'}(x, z) u^{p'}(x)dx\right)^{\frac{p(q-1)}{p-q}}\right.\\
\left.\times d\left(-\int\limits_z^\infty K_{n, i}^{p'}(x, z) u^{p'}(x)dx\right)\right)^{\frac{p-q}{pq}},~i=0,1,...,n.
\end{multline*}

\end{theorem}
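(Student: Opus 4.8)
The plan is to derive Theorem~\ref{th3.11} from Theorem~\ref{th3.9} by duality, exploiting that operator \eqref{1.2} is the transpose of operator \eqref{1.1}. First I would record the Fubini identity: for nonnegative $f,g$ with compact support,
$$\int\limits_0^\infty \mathcal{K}^+ f(x)\,g(x)\,dx=\int\limits_0^\infty g(x)u(x)\int\limits_0^x K(x,s)v(s)f(s)\,ds\,dx=\int\limits_0^\infty f(s)\,\mathcal{K}^- g(s)\,ds,$$
the middle equality being Fubini's theorem over the triangle $\{0<s<x<\infty\}$. Hence $\mathcal{K}^-$ is the adjoint of $\mathcal{K}^+$ with respect to the bilinear form $\int_0^\infty fg\,dx$, so, by the standard duality of Lebesgue spaces, $\mathcal{K}^-$ is bounded from $L_p(I)$ to $L_q(I)$ if and only if $\mathcal{K}^+$ is bounded from $L_{q'}(I)$ to $L_{p'}(I)$, with equal norms; this is the same duality already invoked in the paper between operators \eqref{eq2.1} and \eqref{eq2.2}.

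Next, since $1<q<p<\infty$ is equivalent to $1<p'<q'<\infty$, Theorem~\ref{th3.9} applies to $\mathcal{K}^+\colon L_{q'}(I)\to L_{p'}(I)$ with the exponents $(q',p')$ in the roles of $(p,q)$ (and with the roles of $u$ and $v$ in the standing local-integrability hypotheses swapped accordingly). Transposition leaves the kernel $K\equiv K_n\in\mathcal{O}_n^{+}$, its decomposition \eqref{eq3.1}, and the auxiliary functions $K_{n,i}$, $K_i$ untouched, so Theorem~\ref{th3.9} yields that $\mathcal{K}^+$ is bounded from $L_{q'}(I)$ to $L_{p'}(I)$ exactly when $\max_{0\le i\le n}\mathcal{B}'_{n,i}<\infty$, with norm $\approx\max_{0\le i\le n}\mathcal{B}'_{n,i}$, where $\mathcal{B}'_{n,i}$ denotes the quantity $\mathcal{B}^+_{n,i}$ of Theorem~\ref{th3.9} after every occurrence of $p$ is replaced by $q'$ and every occurrence of $q$ by $p'$.

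It then remains to carry out the exponent bookkeeping identifying $\mathcal{B}'_{n,i}$ with the quantity $\mathcal{B}^-_{n,i}$ of the statement. Using $(q')'=q$ and $(p')'=p$ one gets $K_{n,i}^{q}u^{q}\mapsto K_{n,i}^{p'}u^{p'}$ and $K_i^{p'}v^{p'}\mapsto K_i^{q}v^{q}$, and the differential $d(\int_0^z K_i^{p'}v^{p'})$ becomes $d(\int_0^z K_i^{q}v^{q})$; from the identity $q'-p'=\frac{p-q}{(p-1)(q-1)}$ one obtains $\frac{p}{p-q}\mapsto\frac{q'}{q'-p'}=\frac{q(p-1)}{p-q}$, $\frac{p(q-1)}{p-q}\mapsto\frac{q'(p'-1)}{q'-p'}=\frac{q}{p-q}$, and $\frac{p-q}{pq}\mapsto\frac{q'-p'}{q'p'}=\frac{p-q}{pq}$. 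Matching term by term gives $\mathcal{B}'_{n,i}=\mathcal{B}^-_{n,i}$, and the same substitution applied to the second (equivalent) expression for $\mathcal{B}^+_{n,i}$ in Theorem~\ref{th3.9} produces the second expression for $\mathcal{B}^-_{n,i}$ here; hence $\|\mathcal{K}^-\|_{p\rightarrow q}\approx\mathcal{B}^-_n$. There is no conceptual obstacle, since Theorem~\ref{th3.9} does all the analytic work; the step most likely to produce errors is exactly this exponent arithmetic, together with keeping straight that transposition carries \eqref{1.2} to \eqref{1.1} with $u$, $v$ and the outer/inner integrations placed correctly.
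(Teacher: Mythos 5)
Your proposal is correct and follows exactly the paper's route: the paper obtains Theorem \ref{th3.11} from Theorem \ref{th3.9} precisely by the mutual duality of operators \eqref{1.1} and \eqref{1.2}, i.e.\ boundedness of $\mathcal{K}^-\colon L_p\to L_q$ is equivalent to boundedness of $\mathcal{K}^+\colon L_{q'}\to L_{p'}$, and your exponent substitutions $P=q'$, $Q=p'$ (giving $\frac{P}{P-Q}=\frac{q(p-1)}{p-q}$, $\frac{P(Q-1)}{P-Q}=\frac{q}{p-q}$, $\frac{P-Q}{PQ}=\frac{p-q}{pq}$, etc.) check out and reproduce both expressions for $\mathcal{B}^-_{n,i}$.
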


\begin{theorem}\label{th3.12} Let $1<q<p<\infty$ and $K(\cdot,\cdot)\equiv K_n(\cdot, \cdot)\in \mathcal{O}_n^{-}$, $n\geq 1$. Then operator \eqref{1.2} is bounded from $L_{p}(I)$ to $L_{q}(I)$ if and only if $\mathbb{B}^-_n=\max\limits_{0\leq i\leq n} \mathbb{B}^-_{i, n}<\infty$. Moreover, $\|\mathcal{K}^-\|_{p\rightarrow q}\approx \mathbb{B}^-_n$, where $\|\mathcal{K}^-\|_{p\rightarrow q}$ is the norm of operator \eqref{1.2} from $L_{p}(I)$ to $L_{q}(I)$ and
\begin{multline*}
\mathbb{B}^-_{i, n}=\left(\int\limits_{[0, \infty)}\left(\int\limits_0^z K_{i, n}^{q}(z, s) v^{q}(s)ds\right)^{\frac{p}{p-q}}\left(\int\limits_z^\infty K_i^{p'}(x, z)u^{p'}(x)dx\right)^{\frac{p(q-1)}{p-q}}\right.\\
\left.\times d\left(-\int\limits_z^\infty K_i^{p'}(x, z)u^{p'}(x)dx\right)\right)^{\frac{p-q}{pq}}
\end{multline*}
\begin{multline*}
\approx\left(\int\limits_{(0, \infty]}\left(\int\limits_z^\infty K_i^{p'}(x, z)u^{p'}(x)dx\right)^{\frac{q(p-1)}{p-q}}\left(\int\limits_0^z K_{i, n}^{q}(z, s) v^{q}(s)ds\right)^{\frac{q}{p-q}}\right.\\
\left.\times d\left(\int\limits_0^z K_{i, n}^{q}(z, s) v^{q}(s)ds\right)\right)^{\frac{p-q}{pq}},~i=0,1,...,n.
\end{multline*}

\end{theorem}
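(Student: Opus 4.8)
The plan is to deduce Theorem~\ref{th3.12} from Theorem~\ref{th3.10} by a duality argument, in exactly the way Theorem~\ref{th3.11} is deduced from Theorem~\ref{th3.9}. First I would record that operators \eqref{1.1} and \eqref{1.2} are mutually adjoint with respect to the bilinear form $\int_0^\infty f(x)g(x)\,dx$: for $0\le f,g$ with compact support, Tonelli's theorem gives
\[
\int_0^\infty(\mathcal{K}^+f)(x)g(x)\,dx=\int_0^\infty u(x)g(x)\!\int_0^x K(x,s)v(s)f(s)\,ds\,dx=\int_0^\infty f(s)(\mathcal{K}^-g)(s)\,ds ,
\]
so that $(\mathcal{K}^-)^{*}=\mathcal{K}^+$ on the relevant Lebesgue spaces. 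Since $1<q<p<\infty$, testing $\mathcal{K}^-g$ against $L_{q'}(I)$ and invoking this identity yields $\|\mathcal{K}^-\|_{L_p\to L_q}=\|\mathcal{K}^+\|_{L_{q'}\to L_{p'}}$, where now $1<p'<q'<\infty$.

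Next I would apply Theorem~\ref{th3.10} to the operator $\mathcal{K}^+$, whose kernel $K_n$ still lies in $\mathcal{O}_n^-$, taking the exponent pair $(q',p')$ in the role of $(p,q)$; this is admissible because $q'$ is the larger and $p'$ the smaller exponent, so the hypothesis of Theorem~\ref{th3.10} holds. That theorem then asserts that $\mathcal{K}^+\colon L_{q'}(I)\to L_{p'}(I)$ is bounded if and only if $\max_{0\le i\le n}\mathbb{B}^+_{i,n}(q',p')<\infty$, and that $\|\mathcal{K}^+\|_{q'\to p'}\approx\max_{0\le i\le n}\mathbb{B}^+_{i,n}(q',p')$, where $\mathbb{B}^+_{i,n}(q',p')$ denotes the quantity from Theorem~\ref{th3.10} with $p,q$ replaced by $q',p'$. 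Combined with the first paragraph, this already yields the boundedness criterion and the norm equivalence claimed in Theorem~\ref{th3.12}, once one knows that $\mathbb{B}^+_{i,n}(q',p')=\mathbb{B}^-_{i,n}$.

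The remaining step is a routine computation with conjugate exponents, and I expect it to be the only point requiring care. One uses $(q')'=q$, so that after the substitution $p\mapsto q'$, $q\mapsto p'$ the factors $K_i^{q}u^{q}$ and $K_{i,n}^{p'}v^{p'}$ in the definition of $\mathbb{B}^+_{i,n}$ become $K_i^{p'}u^{p'}$ and $K_{i,n}^{q}v^{q}$, precisely the factors in $\mathbb{B}^-_{i,n}$; the intervals of integration and the direction of the Stieltjes measures transform consistently as well. For the exponents one uses the identity $q'-p'=\frac{p-q}{(p-1)(q-1)}$, which yields $\frac{q'-p'}{p'q'}=\frac{p-q}{pq}$ (the outer power), $\frac{p'}{q'-p'}=\frac{p(q-1)}{p-q}$, $\frac{p'(q'-1)}{q'-p'}=\frac{p}{p-q}$, together with the dual identities $\frac{q'}{q'-p'}=\frac{q(p-1)}{p-q}$ and $\frac{q'(p'-1)}{q'-p'}=\frac{q}{p-q}$. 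Matching these against the two displayed expressions for $\mathbb{B}^-_{i,n}$ shows that the first form of $\mathbb{B}^+_{i,n}(q',p')$ coincides with the first displayed form of $\mathbb{B}^-_{i,n}$, while the ``$\approx$'' between the two forms in Theorem~\ref{th3.10} transfers verbatim to the two forms in Theorem~\ref{th3.12}. There is no analytic obstacle here: everything is reduced to the already proved Theorem~\ref{th3.10} together with elementary $L_p$-duality.
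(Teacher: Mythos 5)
Your proposal is correct and follows the same route as the paper: Theorem~\ref{th3.12} is obtained from Theorem~\ref{th3.10} by the mutual duality of operators \eqref{1.1} and \eqref{1.2}, i.e., $\|\mathcal{K}^-\|_{L_p\to L_q}=\|\mathcal{K}^+\|_{L_{q'}\to L_{p'}}$ with $1<p'<q'<\infty$, followed by the conjugate-exponent substitution. Your exponent identities $\frac{q'-p'}{p'q'}=\frac{p-q}{pq}$, $\frac{p'}{q'-p'}=\frac{p(q-1)}{p-q}$, $\frac{p'(q'-1)}{q'-p'}=\frac{p}{p-q}$, $\frac{q'}{q'-p'}=\frac{q(p-1)}{p-q}$, $\frac{q'(p'-1)}{q'-p'}=\frac{q}{p-q}$ all check out and reproduce $\mathbb{B}^-_{i,n}$ exactly.
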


\bigskip

\end{document}